\numberwithin{equation}{section}
\numberwithin{equation}{section}
\newtheorem{theorem}{Theorem}[section]
\newtheorem{lemma}[theorem]{Lemma}
\newtheorem{proposition}[theorem]{Proposition}
\newtheorem{corollary}[theorem]{Corollary}
\newtheorem*{theorem*}{Theorem}
\newtheorem*{corollary*}{Corollary}
\theoremstyle{definition}
\newtheorem{definition}[theorem]{Definition}
\theoremstyle{remark}
\newtheorem{example}[theorem]{Example}
\theoremstyle{remark}
\newtheorem{remark}[theorem]{Remark}
\newcommand\bp{\begin{proof}}
\newcommand\ep{\end{proof}}
\begin{document}

 \title{Simplicity of right-angled Hecke C$^{\ast}$-algebras}

\author{Mario Klisse}

\address{TU Delft, EWI/DIAM,
	P.O.Box 5031,
	2600 GA Delft,
	The Netherlands}

\email{m.klisse@tudelft.nl}

\begin{abstract}

By exploiting properties of boundaries associated with Coxeter groups we obtain a complete characterization of simple right-angled multi-parameter Hecke C$^{\ast}$-algebras. This extends previous results by Caspers, Larsen and the author. Based on work by Raum and Skalski, we further describe the center and the character space of right-angled Hecke C$^{\ast}$-algebras.

\end{abstract}

\date{\today. \emph{MSC2010:} 20C08, 20F55, 46L05, 46L65. The author is supported by the NWO project ``The structure of Hecke-von Neumann algebras'', 613.009.125.}

\maketitle


\section*{Introduction}

(Iwahori) Hecke algebras are deformations of the group algebra of Coxeter groups depending on a deformation (multi-)parameter $q$. They can be viewed as an abstraction of certain endomorphism rings which naturally appear in the representation theory of finite groups of Lie type \cite{Bourbaki} and are particularly well studied in the case of spherical and affine Coxeter groups (see \cite{IwahoriMatsumoto}, \cite{KL2}, \cite{Bernstein}, \cite{KL}). For other Coxeter groups they appear in the context of buildings and Kac-Moody groups acting on them \cite{Remy}.

Hecke algebras of a given Coxeter system $(W,S)$ can be naturally represented on the Hilbert space $\ell^{2}(W)$ of square-summable functions on $W$. They complete to C$^{\ast}$-algebras $C_{r,q}^{\ast}(W)$ and von Neumann algebras $\mathcal{N}_{q}(W)$. The study of these operator algebras gave insight in the cohomology of associated buildings and its $\ell^2$-Betti numbers (see \cite{Dymara}, \cite{Dymara2}) and they are related to Dykema's interpolated free group factors, which play an important role in the treatment of the infamous free factor problem (see \cite{Dykema}, \cite{Radulescu}, \cite{Gar}). Much earlier Hecke operator algebras of spherical and affine type Coxeter systems have been studied in \cite{Matsumoto}.

Despite their ubiquitousness, until now Hecke operator algebras are well understood only in the case of spherical and affine type Coxeter groups. In particular, the natural question for a characterization of the simplicity (i.e. triviality of the ideal structure) and the trace-uniqueness of $C_{r,q}^{\ast}(W)$ and the factoriality (i.e. triviality of the center) of $\mathcal{N}_{q}(W)$ is wide open. Recently, the investigation of right-angled Hecke C$^{\ast}$-algebras and right-angled Hecke-von Neumann algebras made some progress. In \cite{Gar} Garncarek characterized the factoriality of single-parameter Hecke-von Neumann algebras. Complementing his ideas with a new combinatorial approach, the result was later extended to the multi-parameter case by Raum and Skalski \cite{Raum}. In \cite{Mario} Caspers, Larsen and the author studied the C$^{\ast}$-algebraic setting and proved, using classical averaging techniques, simplicity and trace-uniqueness results for right-angled Hecke C$^{\ast}$-algebras and certain ranges of deformation parameters $q$. As remarked in \cite[Subsection 5.4]{Mario}, Dykema's results on free products of finite dimensional abelian C$^{\ast}$-algebras in \cite{Dykema2} imply a complete description of the ideal structure and the trace-uniqueness of Hecke C$^{\ast}$-algebras of free products of right-angled abelian Coxeter groups. It is further known that the reduced group C$^{\ast}$-algebra of an irreducible Coxeter system is simple if and only if the corresponding Coxeter system is of non-affine type (see \cite{Fe}, \cite{DeLaHarpe}, \cite{Cornulier}). Other relevant references treating non-affine Hecke operator algebras are \cite{Caspers}, \cite{CSW}, \cite{Raum2}.

A notion that was famously used by Kalantar and Kennedy \cite{KalantarKennedy} to solve the longstanding question regarding the C$^{\ast}$-simplicity of a given (discrete) group is that of Furstenberg's boundary and (topological) boundary actions. The authors established a link between dynamical properties of the Furstenberg boundary of a given group and the structure of the corresponding group C$^{\ast}$-algebra, which led to  results on simplicity, uniqueness of trace and tightness of nuclear embeddings of group C$^{\ast}$-algebras (see e.g. \cite{Haagerup}, \cite{BKKO}, \cite{Kennedy}) and inspired various generalizations (see e.g. \cite{BK}, \cite{HaKa}, \cite{Adam}). Inspired by the approach in \cite{Haagerup} our present work goes into a similar direction. In \cite{Mario2} the author introduced and studied topological boundaries and compactifications associated with connected rooted graphs. These are topological spaces that reflect combinatorial properties of the underlying graph and which are particularly tractable in the case of (Cayley graphs of) Coxeter groups. In the latter context the spaces have been considered earlier by Caprace-Lécureux \cite{Caprace} and Lam-Thomas \cite{Lam}. The striking advantage of the construction is its close connection to the Hecke operator algebras of the corresponding Coxeter system (see Section \ref{1} for more details), which has been utilized in \cite[Section 4]{Mario2}. We will further exploit the implications of this connection by using it to characterize the simplicity of right-angled Hecke C$^{\ast}$-algebras, thus extending the results in \cite{Mario} and (partially) answering \cite[Question 5.13]{Mario}. This leads to a full classification of the simplicity in the right-angled case which is the main result of this paper.

\begin{theorem*}
Let $(W,S)$ be an irreducible, right-angled Coxeter system with $\#S<\infty$ and $q=(q_{s})_{s\in S}\in\mathbb{R}_{>0}^{(W,S)}$ a multi-parameter. Let further $\mathcal{R}$ be the region of convergence of the growth series $\sum_{\mathbf{w}\in W}z_{\mathbf{w}}$, set $\mathcal{R}^{\prime}:=\{ (q_{s}^{\epsilon_{s}})_{s\in S}\mid q\in\mathcal{R}\cap\mathbb{R}_{>0}^{(W,S)}\text{, }\epsilon\in\left\{ -1,1\right\} ^{(W,S)}\}$  and define $\overline{\mathcal{R}^{\prime}}$ to be the closure of $\mathcal{R}^{\prime}$ in $\mathbb{R}_{>0}^{(W,S)}$. Then the Hecke C$^{\ast}$-algebra $C_{r,q}^{\ast}(W)$ is simple if and only if $q \in\mathbb{R}_{>0}^{(W,S)}\setminus\overline{\mathcal{R}^{\prime}}$.
\end{theorem*}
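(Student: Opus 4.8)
The plan is to combine the boundary-action machinery developed for right-angled Coxeter groups with the characterization of factoriality obtained by Raum and Skalski, reducing the simplicity question to a concrete condition on the multi-parameter $q$. The main technical input is the close connection, recalled in Section \ref{1}, between the Hecke C$^{\ast}$-algebra $C_{r,q}^{\ast}(W)$ and the action of $W$ on the topological boundary $\partial$ of its Cayley graph constructed in \cite{Mario2}. Concretely, I would first establish that this boundary action is a topological boundary action in the sense of Furstenberg (minimality plus strong proximality); for right-angled $W$ this should follow from the combinatorial structure of the boundary together with irreducibility of $(W,S)$, much as in \cite{Caprace} and \cite[Section 4]{Mario2}. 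Once this is in place, the Kalantar--Kennedy--type philosophy (in the relative/crossed-product form appropriate here, cf. \cite{KalantarKennedy}, \cite{Haagerup}) tells us that the reduced crossed product $C(\partial)\rtimes_{r}W$ is simple, and that $C_{r,q}^{\ast}(W)$ embeds into it in a canonical, trace-respecting way. Thus simplicity of $C_{r,q}^{\ast}(W)$ becomes equivalent to the statement that no nontrivial ideal of the crossed product intersects the Hecke algebra trivially, which in turn I would translate into the vanishing of a natural conditional expectation or, more precisely, into the faithfulness of the canonical trace on $C_{r,q}^{\ast}(W)$.

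The second main step is to pin down exactly when that trace is faithful, and this is where the growth series and the region $\mathcal{R}$ enter. The trace on $C_{r,q}^{\ast}(W)$ fails to be faithful precisely when the Plancherel weight degenerates, and by the work of Garncarek \cite{Gar} and Raum--Skalski \cite{Raum} this degeneration is governed by whether the point $q$ (or one of its ``reflections'' $(q_{s}^{\epsilon_{s}})_{s}$) lies in the region of convergence $\mathcal{R}$ of the growth series $\sum_{\mathbf{w}\in W}z_{\mathbf{w}}$; the relevant set is exactly $\mathcal{R}'$, and passing to its closure $\overline{\mathcal{R}'}$ accounts for the boundary case where the relevant representation, while not literally in the convergence region, is weakly contained in representations that are. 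Here I would invoke the multi-parameter factoriality/non-factoriality dichotomy of \cite{Raum} and upgrade it from the von Neumann algebraic statement about $\mathcal{N}_q(W)$ to the C$^{\ast}$-statement: when $q\notin\overline{\mathcal{R}'}$ the boundary action forces simplicity via the first step, and when $q\in\overline{\mathcal{R}'}$ one exhibits an explicit nontrivial (closed, two-sided) ideal — for instance the kernel of the degenerate trace, or of a one-dimensional character in the case $q\in\mathcal{R}'$ itself — so $C_{r,q}^{\ast}(W)$ is not simple.

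For the infinite-rank case $\#S=\infty$ the argument simplifies: the region $\mathcal{R}$ of convergence of the growth series becomes empty (the growth series of an infinitely generated right-angled Coxeter group has no region of convergence in $\mathbb{R}_{>0}^{(W,S)}$ with the relevant topology), so $\overline{\mathcal{R}'}=\emptyset$ and the first bullet's criterion $q\in\mathbb{R}_{>0}^{(W,S)}\setminus\overline{\mathcal{R}'}$ is automatically satisfied for every $q$. I would therefore just check that the boundary action and the embedding into the crossed product go through verbatim when $S$ is infinite — the construction in \cite{Mario2} is set up to allow this — and conclude simplicity for all $q$. I expect the main obstacle to be the second step: establishing that $q\in\overline{\mathcal{R}'}$ genuinely obstructs simplicity (not merely factoriality) requires producing a bona fide closed two-sided ideal of the C$^{\ast}$-algebra, and controlling the behavior on the boundary $\partial\mathcal{R}'$ of the convergence region — where the trace may still be faithful or not depending on subtle growth estimates — is the delicate point; handling it will likely require the precise description of characters and the center of $C_{r,q}^{\ast}(W)$ that the abstract promises, feeding back the Raum--Skalski analysis at the boundary of $\mathcal{R}$.
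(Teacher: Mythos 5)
Your proposal correctly identifies the two main inputs (the boundary action $W\curvearrowright\partial(W,S)$ and the Raum--Skalski convergence region $\mathcal{R}^{\prime}$), and the non-simplicity half is essentially the paper's: for $q\in\overline{\mathcal{R}^{\prime}}$ one exhibits characters $\chi_{q_{\epsilon}}$ (coming from the rank-one Hecke eigenprojections $E_{q,\epsilon}$, extended to the boundary of $\mathcal{R}^{\prime}$ by a limiting argument as in Proposition \ref{characters}), whose kernels are proper closed two-sided ideals. But the simplicity half has a genuine gap. Simplicity of the ambient crossed product $C(\partial(W,S))\rtimes_{red}W$ says nothing by itself about simplicity of the subalgebra $C_{r,q}^{\ast}(W)$: a proper ideal $I\subseteq C_{r,q}^{\ast}(W)$ could a priori generate the \emph{whole} crossed product. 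Moreover, your proposed reduction to ``faithfulness of the canonical trace'' cannot work: $\tau_{q}$ is faithful on $C_{r,q}^{\ast}(W)$ for \emph{every} $q$ (it is the restriction of the faithful vector state $x\mapsto\langle x\delta_{e},\delta_{e}\rangle$), yet $C_{r,q}^{\ast}(W)$ fails to be simple on all of $\overline{\mathcal{R}^{\prime}}$. The real content of the proof is to show that the ideal $J$ generated by $I$ in the crossed product is proper, by producing a state vanishing on $J$; this requires two steps your sketch omits. First, a quantitative estimate (Proposition \ref{step 1}): for $q\notin\overline{\mathcal{R}^{\prime}}$, every state $\phi$ on $\pi(\mathfrak{A}(W))$ admits elements $\mathbf{w}_{i}$ with $\mathbf{g}\leq\mathbf{w}_{i}^{-1}$ and $q_{\mathbf{w}_{i}}^{-1}\phi(\tilde{P}_{\mathbf{w}_{i}})\rightarrow0$ --- this is exactly where the divergence of the growth series outside $\overline{\mathcal{R}^{\prime}}$ enters, and it is the \emph{only} place the hypothesis $q\notin\overline{\mathcal{R}^{\prime}}$ is used. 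Second, a Haagerup/Powers-type averaging over $T_{\mathbf{g}^{-i_{k}}}^{(q)}(\cdot)T_{\mathbf{g}^{i_{k}}}^{(q)}$ along a closed path $\mathbf{g}$ covering the Coxeter diagram (Proposition \ref{step 2} and the proof of the theorem), which converts a state killing $I$ into one that is multiplicative on $\pi(\mathcal{D}(W,S))$ and still kills $I$, hence kills $J$; only then does simplicity of the crossed product force $J=0$ and $I=0$. Without these two steps your argument gives no mechanism by which the condition $q\notin\overline{\mathcal{R}^{\prime}}$ is actually used.

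Two smaller points. The embedding $C_{r,q}^{\ast}(W)\hookrightarrow C(\partial(W,S))\rtimes_{red}W$ exists only for $q\notin\mathcal{R}^{\prime}$, so it is not ``canonical'' for all parameters. And for $\#S=\infty$ the compactification $\overline{(W,S)}$ is constructed only for finite rank systems, so the crossed-product argument does not run ``verbatim''; the paper instead realizes $C_{r,q}^{\ast}(W)$ as an inductive limit of finite-rank Hecke C$^{\ast}$-algebras (Lemma \ref{isomorphism3}) and deduces simplicity from the finite-rank case.
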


\begin{corollary*}
Let $(W,S)$ be an irreducible, right-angled Coxeter
system with $\#S=\infty$ and $q=(q_{s})_{s\in S}\in\mathbb{R}_{>0}^{(W,S)}$ a multi-parameter.
Then the Hecke C$^{\ast}$-algebra $C_{r,q}^{\ast}(W)$ is simple
if and only if there exists a finite subset $T\subseteq S$ such that
the Hecke C$^{\ast}$-algebra $C_{r,q_{T}}^{\ast}(W_{T})$ of the special subgroup $W_T \subseteq W$ with $q_{T}:=(q_{t})_{t\in T}$
is simple.
\end{corollary*}

Using a Haagerup-type inequality from \cite{Mario}, we will also prove that the central projections of right-angled Hecke-von Neumann algebras considered by Raum and Skalski in \cite{Raum} are already contained in the corresponding Hecke C$^{\ast}$-algebras. This leads to a decomposition of $C_{r,q}^{\ast}(W)$ which is analogous to the one of $\mathcal{N}_{q}(W)$ and can be used to characterize the space of characters (i.e. unital, multiplicative, linear functionals) of  $C_{r,q}^{\ast}(W)$.\\

\noindent \emph{Structure.} In Section \ref{1} we recall general facts about Coxeter groups, the boundaries introduced in \cite{Mario2} and multi-parameter Hecke algebras (resp. their operator algebraic counterparts). In Section \ref{2} a number of technical statements related to certain C$^{\ast}$-algebras associated with Coxeter systems are proved, which allow to translate group algebraic arguments into the $q$-deformed setting. This will be used in Section \ref{3} and \ref{simplicity} where the center, the character space and the simplicity of right-angled Hecke C$^{\ast}$-algebras are characterized.\\

\vspace{1mm}


\section{Preliminaries and notation} \label{1}

\subsection{General notation}

We will write $\mathbb{N}:=\left\{ 0,1,2,...\right\}$  and $\mathbb{N}_{\geq1}:=\left\{ 1,2,...\right\}$  for the natural numbers. Scalar products of Hilbert spaces are linear in the first variable and we denote the bounded operators on a Hilbert space $\mathcal{H}$ by $\mathcal{B}(\mathcal{H})$. For a C$^{\ast}$-algebra $A$ we will write $\mathcal{S}(A)$ for the state space of $A$ and endow it with the weak-$\ast$ topology. Further, if $(A,G,\alpha)$ is a dynamical system we write $g.a:=\alpha_{g}(a)$ where $a\in A$, $g\in G$. Similar notation is being used for (continuous) group actions on topological spaces. The symbol $\odot$ denotes the algebraic tensor product of $\ast$-algebras, $\otimes$ is the minimal tensor product of C$^{\ast}$-algebras, $\overline{\otimes}$ denotes the tensor product of von Neumann algebras and we write $\rtimes_{r}$ for reduced (C$^{\ast}$-algebraic) crossed products. Further, the neutral element of a group is always denoted by $e$ and for a set $S$ we write $\#S$ for the number of elements in $S$ and $\chi_S$ for the characteristic function on $S$.


\subsection{Coxeter groups}

A \emph{Coxeter group} $W$ is a group generated by a (possibly infinite) set $S$ of the form
\begin{eqnarray}
\nonumber
W=\left\langle S\mid\forall s,t\in S\text{: }(st)^{m_{st}}=e\right\rangle ,
\end{eqnarray}
where $m_{st}\in\left\{ 1,2,...,\infty\right\}$  with $m_{ss}=1$ and $m_{st}\geq2$ for all $s\neq t$. The condition $m_{st}=\infty$ means that no relation of the form $(st)^m = 1$, $m\in \mathbb{N}$ is imposed. The pair $(W,S)$ is called a \emph{Coxeter system}. It is \emph{right-angled} if $m_{st}=2$ or $m_{st}=\infty$  for all $s\neq t$. If the generating set $S$ is finite the system $(W,S)$ has \emph{finite rank}. The data of $(W,S)$ is usually encoded in its \emph{Coxeter diagram} whose vertex set is $S$ and whose edge set is given by $\{ (s,t)\mid m_{st}\geq3\}$  where every edge between two vertices $s,t\in S$ is labeled by the corresponding exponent $m_{st}$.

For a subset $T\subseteq S$ the \emph{special subgroup} $W_{T}$ of $W$ generated by $T$ is also a Coxeter group with the same exponents as $W$ (see \cite[Theorem 4.1.6]{Davis}). The system $(W,S)$ is \emph{irreducible} if its Coxeter diagram is connected. This is the case if and only if $W$ does not decompose as a non-trivial direct product of special subgroups.

Every element $\mathbf{w}\in W$ decomposes as a product $\mathbf{w}=s_{1}...s_{n}$ of generators $s_{1},...,s_{n}\in S$. The expression $s_1 ... s_n$ is called \emph{reduced} if it has minimal length. The \emph{word length} of $\mathbf{w}$, denoted by $\left|\mathbf{w}\right|$, is then defined to be the number of generators appearing in a reduced expression for $\mathbf{w}$, where $\left|e\right|:=0$. One says that $\mathbf{w}$ \emph{starts} (resp. \emph{ends}) with $\mathbf{v}\in W$ if $|\mathbf{v}^{-1}\mathbf{w}|=|\mathbf{w}|-|\mathbf{v}|$ (resp. $|\mathbf{w}\mathbf{v}^{-1}|=|\mathbf{w}|-|\mathbf{v}|$). In that case we write $\mathbf{v}\leq_{R}\mathbf{w}$ (resp. $\mathbf{v}\leq_{L}\mathbf{w}$). This defines a partial order on $W$ which is called the \emph{weak right} (resp. \emph{weak left}) \emph{Bruhat order} of $(W,S)$. It turns $W$ into a \emph{complete meet-semilattice} (see \cite[Theorem 3.2.1]{combinatorics}). To simplify the notation we will usually write $\leq$ instead of $\leq_{R}$.

In the right-angled case, cancellation of the form $s_{1}...s_{n}=s_{1}...\widehat{s_{i}}...\widehat{s_{j}}...s_{n}$ for $s_{1},...,s_{n}\in S$ implies that $s_{i}=s_{j}$ and that $s_{i}$ commutes with $s_{i+1}...s_{j-1}$ (this follows from \cite[Lemma 3.3.3]{Davis}). Here $s_{1}...s_{n}=s_{1}...\widehat{s_{i}}...\widehat{s_{j}}...s_{n}$ means that $s_i$ and $s_j$ are removed from the expression $s_{1}...s_{n}$. We will use this frequently without further mention. Another useful property is the following.

\begin{proposition}[{ \cite[Proposition 3.1.2 (vi)]{combinatorics}}] \label{help}
Let $(W,S)$ be a Coxeter system, $\mathbf{v},\mathbf{w}\in W$ and $s\in S$ with $s\leq\mathbf{v}$, $s\leq\mathbf{w}$. Then, $\mathbf{v}\leq\mathbf{w}$ if and only if $s\mathbf{v}\leq s\mathbf{w}$.
 \end{proposition}


\subsection{Topological boundaries of Coxeter groups}

In \cite{Mario2} topological boundaries and compactifications associated with connected rooted graphs were introduced and studied. These topological spaces are particularly useful in the case of (Cayley graphs of) Coxeter groups. For these, the spaces have been introduced earlier by Caprace and L\'{e}cureux in \cite{Caprace} and by Lam and Thomas in \cite{Lam} in a different setting, using different formalisms. The construction that we will follow in this paper coincides with the one in \cite{Mario2}, but we will restrict to the case of Cayley graphs of Coxeter groups. For more details and the general construction see \cite{Mario2}.\\

Let $(W,S)$ be a finite rank Coxeter system and denote by $K:=\text{Cay}(W,S)$ the \emph{Cayley graph} of $W$ with respect to the generating set $S$, i.e. the graph with vertex set $W$ and edge set $\{ (\mathbf{v},\mathbf{w})\in W\times W\mid\mathbf{v}^{-1}\mathbf{w}\in S\}$. The metric $d\text{: }W\times W\rightarrow\mathbb{R}_{\geq 0}$ defined by $d(\mathbf{v},\mathbf{w}):=|\mathbf{v}^{-1}\mathbf{w}|$ turns (the vertex set of) $K$ into a metric space. A \emph{geodesic path} $\alpha$ in $K$ is a (possibly infinite) sequence $\alpha_{0}\alpha_{1}...$ of vertices with $d(\alpha_{i},\alpha_{j})=\left|i-j\right|$ for all $i,j$. Without further comments we will often extend a finite geodesic path $\alpha_0 ... \alpha_n$ to an infinite path via $\alpha_0...\alpha_n\alpha_n\alpha_n...$ and still call it (finite) geodesic. For a geodesic path $\alpha$ and $\mathbf{w}\in W$ we write $\mathbf{w}\leq\alpha$ if $\mathbf{w}\leq\alpha_{i}$ for all large enough $i$ and we write $\mathbf{w}\nleq\alpha$ if $\mathbf{w}\nleq\alpha_{i}$ for all large enough $i$. Now, define an equivalence relation $\sim$ on the set of all infinite geodesic paths in $K$ via $\alpha\sim\beta$ if and only if for every $\mathbf{\mathbf{w}}\in W$ the implications $\mathbf{w}\leq\alpha\Leftrightarrow\mathbf{w}\leq\beta$ hold. Write $\partial(W,S)$ for the set of corresponding equivalence classes. This set is called the \emph{boundary} of $(W,S)$ and $\overline{(W,S)}:=W\cup\partial(W,S)$ is called the \emph{compactification} of $(W,S)$. The weak right Bruhat order naturally extends to a partial order $\leq$ on $\overline{(W,S)}$ (see \cite[Lemma 2.2]{Mario2}). We then equip $\overline{(W,S)}$ with the topology generated by the subbase of sets of the form
\begin{eqnarray}
\nonumber
\mathcal{U}_{\mathbf{w}}:=\left\{ z\in\overline{(W,S)}\mid\mathbf{w}\leq z\right\} \text{\: and \:}\mathcal{U}_{\mathbf{w}}^{c}:=\left\{ z\in\overline{(W,S)}\mid\mathbf{w}\nleq z\right\} \text{,}
\end{eqnarray}
where $\mathbf{w}\in W$. This turns $\partial(W,S)$ and $\overline{(W,S)}$ into metrizable compact spaces and $W$ naturally embeds as a dense discrete subset into $\overline{(W,S)}$. Further, the left action of $W$ on itself induces a (continuous) action $W\curvearrowright\overline{(W,S)}$ with $W.(\partial(W,S))=\partial(W,S)$. This action has some desirable properties, one of which will play a role in the characterization of the simplicity of right-angled Hecke C$^{\ast}$-algebras.

\begin{theorem}[{\cite[Theorem 3.20 and Proposition 3.26]{Mario2}}] \label{cancellation}
Let $(W,S)$ be a right-angled irreducible Coxeter system with $3\leq\#S<\infty$. Then the action $W\curvearrowright\partial(W,S)$ is a \emph{boundary action}, meaning that the following statements hold:

\begin{itemize}
\item \emph{Minimality:} For every $z\in\partial(W,S)$ the $W$-orbit $W.z:=\left\{ \mathbf{w}.z\mid\mathbf{w}\in W\right\}$  is dense in $\partial(W,S)$;
\item  \emph{Strong proximality:} For every probability measure $\nu\in\text{Prob}\left(\partial(W,S)\right)$ the weak-$\ast$ closure of the $W$-orbit $W.\nu$ contains a point mass $\delta_{z}\in\text{Prob}(\partial(W,S))$ for some $z\in\partial(W,S)$.
\end{itemize}
Further, the action is \emph{topologically free}, i.e. for every $\mathbf{w}\in W\setminus\left\{ e\right\}$  the set $(\partial(W,S))^{\mathbf{w}}:=\left\{ z\in\partial(W,S)\mid\mathbf{w}.z=z\right\}$  has no inner points.
\end{theorem}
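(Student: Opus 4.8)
The plan is to establish the three properties—minimality, strong proximality, and topological freeness—by working directly with the combinatorics of reduced words in the right-angled Coxeter system, exploiting the explicit description of the subbasic open sets $\mathcal{U}_{\mathbf{w}}$ and $\mathcal{U}_{\mathbf{w}}^c$ and the extended Bruhat order on $\overline{(W,S)}$. Throughout I would use the right-angled cancellation law recalled in the preliminaries (that $s_1\dots s_n = s_1\dots\widehat{s_i}\dots\widehat{s_j}\dots s_n$ forces $s_i=s_j$ commuting with everything in between) together with Theorem \ref{help}, which lets me ``peel off'' a common left-divisor $s$ from an inequality $\mathbf{v}\le\mathbf{w}$.

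For \emph{minimality}, I would fix a boundary point $z\in\partial(W,S)$ and a basic open neighbourhood, which by definition of the topology is a finite intersection of sets $\mathcal{U}_{\mathbf{v}_1},\dots,\mathcal{U}_{\mathbf{v}_k},\mathcal{U}_{\mathbf{u}_1}^c,\dots,\mathcal{U}_{\mathbf{u}_m}^c$; since $W$ is a complete meet-semilattice under $\le_R$ and any point dominates at most one ``maximal'' element below a given $\mathbf{v}_i$, one reduces to showing that for a single $\mathbf{w}\in W$ the orbit $W.z$ meets $\mathcal{U}_{\mathbf{w}}$, i.e. that some translate $\mathbf{g}.z$ starts with $\mathbf{w}$. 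The idea is to pick any geodesic ray $\alpha$ representing $z$, look far enough out along it to find an element $\alpha_i$ of large length, and multiply on the left by a suitable group element that ``rotates'' the initial segment of $\alpha_i$ so as to begin with $\mathbf{w}$; here irreducibility and $\#S\ge 3$ are what guarantee enough room to do this (in a reducible system or with $\#S\le 2$ the boundary is too rigid). One must check that this left translation does not accidentally destroy membership in finitely many $\mathcal{U}_{\mathbf{u}_j}^c$; this is arranged by choosing $i$ large compared to all the $|\mathbf{u}_j|$ and using that $\mathbf{u}_j\nleq z$ means $\mathbf{u}_j\nleq\alpha_i$ eventually.

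For \emph{strong proximality}, given $\nu\in\mathrm{Prob}(\partial(W,S))$ I would construct a sequence $(\mathbf{w}_n)$ in $W$ such that $\mathbf{w}_n.\nu$ concentrates on a single boundary point. The natural approach: choose a generator $s\in S$ and observe that left multiplication by $s$ pushes mass towards $\mathcal{U}_s$, because $\mathbf{v}\in\mathcal{U}_s^c$ implies $s\mathbf{v}\in\mathcal{U}_s$ unless $\mathbf{v}$ already ends badly—more precisely one tracks how the measure of the ``shrinking'' cylinder sets $\mathcal{U}_{\mathbf{w}}$ for an increasing sequence of group elements $\mathbf{w}$ along a ray behaves under these translations, using that $\bigcap_i\mathcal{U}_{\alpha_i}=\{z\}$ for the corresponding boundary point. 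Iterating along a carefully chosen infinite reduced word (again using $\#S\ge 3$ and irreducibility to keep the word genuinely growing and not collapsing via commutation) forces all but $\varepsilon$ of the mass into arbitrarily small neighbourhoods of a limit point $z$; a compactness/diagonal argument then extracts the desired point mass in the weak-$\ast$ closure. The bookkeeping of how $\nu$-mass redistributes among the $\mathcal{U}_{\mathbf{w}}$ under left translation is the technical heart here.

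For \emph{topological freeness}, fix $\mathbf{w}\ne e$ and suppose for contradiction that the fixed-point set $(\partial(W,S))^{\mathbf{w}}$ contains a nonempty open set, hence some $\mathcal{U}_{\mathbf{v}}\cap\bigcap_j\mathcal{U}_{\mathbf{u}_j}^c$. I would exhibit a boundary point in this open set that is \emph{not} fixed by $\mathbf{w}$: take a ray through $\mathbf{v}$ and extend it using letters that do not commute with the relevant letters of $\mathbf{w}$, so that left multiplication by $\mathbf{w}$ strictly changes which group elements lie below the resulting boundary point. Concretely, one compares $\mathbf{w}\le z$ versus $e\le z$: by building the tail of $z$ out of a letter $t$ chosen (using $\#S\ge 3$ and irreducibility, so such a $t$ exists not commuting with some letter of $\mathbf{w}$) one ensures $|\mathbf{w}\mathbf{w}'|\ne|\mathbf{w}'|$ or, more carefully, that $\mathbf{w}.z$ and $z$ differ on membership of some $\mathcal{U}_{\mathbf{x}}$. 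The main obstacle I anticipate across all three parts is exactly this: producing, inside a prescribed basic open set, explicit infinite reduced words whose behaviour under left translation is controlled—right-angled cancellation makes ``reduced'' a subtle moving target, and one must consistently use Theorem \ref{help} and the commutation-only cancellation rule to verify that the words constructed really are geodesic and really do land in the intended cylinder sets. I would structure the write-up so that a single combinatorial lemma—``given a basic open set, there is a long geodesic segment realizing it, extendable by any letter outside a specified finite commuting set''—is proved once and then invoked for all three properties.
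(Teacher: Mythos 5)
First, a point of order: the paper does not prove this theorem. It is imported wholesale from \cite[Theorem 3.20 and Proposition 3.26]{Mario2}, so there is no in-paper argument to compare yours against; I can only judge the outline on its own terms. As an outline it points in a reasonable direction, but each of the three parts stops exactly where the real difficulty begins. For minimality, your reduction of an arbitrary basic open set to a single cylinder $\mathcal{U}_{\mathbf{w}}$ is not justified: the topology is generated by the $\mathcal{U}_{\mathbf{w}}$ \emph{together with} their complements precisely because the sets $\mathcal{U}_{\mathbf{w}}$ alone need not form a neighbourhood basis, so the sets $\mathcal{U}_{\mathbf{u}_j}^c$ cannot be discarded by ``choosing $i$ large''. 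Moreover, arranging that a translated geodesic passes through $\mathbf{w}$ at some time $i$ does not give $\mathbf{w}\leq\mathbf{g}.z$: the translated ray no longer starts at $e$, so its later vertices need not dominate $\mathbf{w}$ in the weak right Bruhat order. That is exactly the point where the combinatorial work (and the hypothesis $\#S\geq 3$ with irreducibility) has to enter, and the proposal does not supply it.

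For strong proximality you explicitly defer ``the bookkeeping'', but that bookkeeping \emph{is} the theorem. The observation that $s.\mathcal{U}_{s}^{c}\subseteq\mathcal{U}_{s}$ does not concentrate a measure at a point: applying a second generator can move mass back out of $\mathcal{U}_{s}$, and for $\#S=2$ (the infinite dihedral group) this strategy provably fails, since the uniform measure on the two boundary points is invariant. The missing idea --- which the present paper itself signals in the remark following the definition of paths in the Coxeter diagram and exploits in the proof of its main theorem --- is to take a \emph{closed} path $\mathbf{g}=s_{1}\dots s_{n}$ in the Coxeter diagram covering the whole graph; for such $\mathbf{g}$ one has $C_{W}(\mathbf{g})=\left\{ \mathbf{g}^{i}\mid i\in\mathbb{Z}\right\}$, $\left|\mathbf{g}^{l}\right|=\left|l\right|\left|\mathbf{g}\right|$, and $\mathbf{g}^{i}.\nu\rightarrow\delta_{\mathbf{g}^{\infty}}$ after the mass of $\nu$ has first been pushed into $\mathcal{U}_{s_{1}}$. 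This single device delivers strong proximality and also powers topological freeness. Your sketch of the latter has the right shape (exhibit a non-fixed point inside any nonempty open subset of the fixed-point set), but again omits the one claim that matters, namely that the constructed boundary point $z$ actually satisfies $\mathbf{w}.z\neq z$; verifying this requires the same centralizer and normal-form control over the tail $\mathbf{g}^{\infty}$. As written, the proposal is a programme rather than a proof.
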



\subsection{Multi-parameter Hecke algebras}

For a Coxeter system $(W,S)$ define $\mathbb{R}_{>0}^{(W,S)}$ to be the set of all multi-parameters $q=(q_s)_{s\in S} \in \mathbb{R}^S_{>0}$ for which $q_s = q_t$ for all $s,t\in S$ which are conjugate to each other. The sets $\mathbb{C}^{(W,S)}$ and $\left\{ -1,1\right\} ^{(W,S)}$ are defined in a similar way. For a tuple $q=(q_s)_{s\in S} \in \mathbb{R}_{>0}^{(W,S)}$, $s\in S$ and a reduced expression $\mathbf{w}=s_{1}...s_{n}$ of $\mathbf{w}\in W$ write
\begin{eqnarray}
\nonumber
q_{\mathbf{w}}:=q_{s_{1}}...q_{s_{n}}\text{\: and \:}p_{s}(q):=q_{s}^{-\frac{1}{2}}(q_{s}-1)\text{.}
\end{eqnarray}
Then $q_{\mathbf{w}}$ does not depend on the choice of the reduced expression for $\mathbf{w}$ (see \cite[Chapter 17.1]{Davis}). Following the notation in \cite{Raum} we also write $q_{s,\epsilon}:=\epsilon_{s}q_{s}^{\epsilon_{s}}$ and $q_{\mathbf{w},\epsilon}:=q_{s_{1},\epsilon}...q_{s_{n},\epsilon}$ for $q\in\mathbb{R}_{>0}^{(W,S)}$, $\epsilon\in\left\{ -1,1\right\} ^{(W,S)}$.\\

By \cite[Proposition 19.1.1]{Davis} for $q\in\mathbb{R}_{>0}^{(W,S)}$ there exists a unique (unital) $\ast$-algebra $\mathbb{C}_{q}\left[W\right]$ spanned by a linear basis $\{ T_{\mathbf{w}}^{(q)}\mid\mathbf{w}\in W\}$  such that for  $s\in S$, $\mathbf{w}\in W$ one has
\begin{eqnarray} \label{multiplication}
T_{s}^{\left(q\right)}T_{\mathbf{w}}^{\left(q\right)}=\begin{cases}
T_{s\mathbf{w}}^{\left(q\right)} & \text{, if }s \nleq \mathbf{w} \\
T_{s\mathbf{w}}^{\left(q\right)}+p_{s}(q)T_{\mathbf{w}}^{\left(q\right)} & \text{, if } s \leq \mathbf{w}
\end{cases}
\end{eqnarray}
and \begin{eqnarray} \nonumber (T_{\mathbf{w}}^{(q)})^{\ast}=T_{\mathbf{w}^{-1}}^{(q)} \text{.} \end{eqnarray}
This $\ast$-algebra is called the \emph{(Iwahori) Hecke algebra} of $(W,S)$ with parameter $q$. Here we use a different normalization of the generators than in \cite{Davis}, which coincides with the notation in \cite{Gar}, \cite{Caspers}, \cite{CSW}, \cite{Mario}, \cite{Raum} and \cite{Mario2}. The equality \eqref{multiplication} in particular implies that $T_{\mathbf{w}}^{(q)}=T_{s_{1}}^{(q)}...T_{s_{n}}^{(q)}$ for a reduced expression $\mathbf{w}=s_{1}...s_{n}$ of $\mathbf{w}\in W$. The $\ast$-algebra $\mathbb{C}_{q}\left[W\right]$ can be represented on $\ell^2(W)$ by bounded operators via 
\begin{eqnarray} \nonumber
T_{s}^{\left(q\right)}\delta_{\mathbf{w}}=\begin{cases}
\delta_{s\mathbf{w}} & \text{, if } s\nleq \mathbf{w}\\
\delta_{s\mathbf{w}}+p_{s}(q)\delta_{\mathbf{w}} & \text{, if } s\leq\mathbf{w}
\end{cases},
\end{eqnarray}
where $(\delta_{\mathbf{w}})_{\mathbf{w}\in W}$ denotes the canonical orthonormal basis of $\ell^{2}(W)$. This defines a faithful $\ast$-representation $\mathbb{C}_{q}\left[W\right]\hookrightarrow\mathcal{B}(\ell^{2}(W))$, therefore we will view $\mathbb{C}_{q}\left[W\right]$ as a $\ast$-subalgebra of $\mathcal{B}(\ell^{2}(W))$. The norm closure $C_{r,q}^{\ast}(W):=\overline{\mathbb{C}_{q}\left[W\right]}^{\left\Vert \cdot\right\Vert }$ is called the \emph{(reduced) Hecke C$^{\ast}$-algebra} and the weak closure $\mathcal{N}_{q}(W):=\overline{\mathbb{C}_{q}\left[W\right]}^{w.o.}$ is called the \emph{Hecke-von Neumann algebra}. Note that for $q_{s}=1$, $s\in S$,
\begin{eqnarray}
\nonumber
\mathbb{C}_{q}\left[W\right]=\mathbb{C}\left[W\right] \text{, } C_{r,q}^{\ast}(W)=C_{r}^{\ast}(W) \text{, } \mathcal{N}_{q}(W)=\mathcal{L}(W)
\end{eqnarray}
are the group algebra, reduced group C$^{\ast}$-algebra and group-von Neumann algebra of $W$. Further, for every $q\in\mathbb{R}_{>0}^{(W,S)}$ the vector state $x\mapsto\left\langle x\delta_{e},\delta_{e}\right\rangle$  restricts to a faithful tracial state $\tau_{q}$ on $C_{r,q}^{\ast}(W)$ and $\mathcal{N}_{q}(W)$ with $\tau_{q}(T_{\mathbf{w}}^{(q)})=0$ for all non-trivial $\mathbf{w}\in W$.\\

The following statement is well-known. A proof can be found in \cite[Proposition 4.7]{Mario}.

\begin{proposition} \label{isomorphism}
Let $(W,S)$ be a Coxeter system, $q=(q_{s})_{s\in S}\in\mathbb{R}_{>0}^{(W,S)}$ and $\epsilon=(\epsilon_{s})_{s\in S}\in\left\{ -1,1\right\} ^{(W,S)}$. Set  $q^{\prime}:=(q_{s}^{\epsilon_{s}})_{s\in S}$. Then $C_{r,q}^{\ast}(W)\cong C_{r,q^{\prime}}^{\ast}(W)$ via $T_{s}^{(q)}\mapsto\epsilon_{s}T_{s}^{(q^{\prime})}$.
\end{proposition}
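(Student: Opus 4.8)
The plan is to construct the isomorphism explicitly on the level of the Hecke $\ast$-algebras $\mathbb{C}_q[W]$ and $\mathbb{C}_{q'}[W]$, and then show it extends to the reduced C$^\ast$-completions. First I would define a map $\Phi$ on generators by $\Phi(T_s^{(q)}) := \epsilon_s T_s^{(q')}$ and check that this respects the defining relations \eqref{multiplication}. The key computation is that the deformation parameters transform correctly: one has $p_s(q') = q_s^{-\epsilon_s/2}(q_s^{\epsilon_s} - 1)$, and a short calculation shows $p_s(q^{-1}) = -p_s(q)$ when $\epsilon_s = -1$ (indeed $q^{-1/2}(q^{-1}-1) = -q^{1/2}q^{-1}(q-1)\cdot q^{-1/2}\cdot q^{1/2}$, i.e. $q^{-1/2}(q^{-1}-1) = -q^{-1/2}(q-1) \cdot q^{-1} \cdot q = \dots$; the clean statement is $p_s(q_s^{-1}) = -q_s^{-1/2}(q_s-1)\cdot$ wait — more carefully $q_s^{-1/2}(q_s^{-1}-1) = q_s^{-1/2}\cdot\frac{1-q_s}{q_s} = -q_s^{-3/2}(q_s-1)$, which is not simply $\pm p_s(q)$). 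Since the sign bookkeeping is delicate, the honest approach is: conjugate the relation $T_s^{(q)}T_{\mathbf{w}}^{(q)} = T_{s\mathbf{w}}^{(q)} + [s\le\mathbf{w}]\,p_s(q)T_{\mathbf{w}}^{(q)}$ by $\Phi$ using the inductive definition $\Phi(T_{\mathbf{w}}^{(q)}) = \epsilon_{\mathbf{w}} T_{\mathbf{w}}^{(q')}$ where $\epsilon_{\mathbf{w}} := \epsilon_{s_1}\cdots\epsilon_{s_n}$ for a reduced word (well-defined because any two reduced words for $\mathbf{w}$ differ by braid moves, and in a Coxeter group the multiset of generators in a reduced word is an invariant of $\mathbf{w}$), and verify the scalar identity $\epsilon_s \epsilon_{\mathbf{w}} = \epsilon_{s\mathbf{w}}$ when $s\nleq\mathbf{w}$, together with $\epsilon_s\epsilon_{\mathbf{w}}\,p_s(q) \cdot (\text{correction}) = \epsilon_{\mathbf{w}}\,p_s(q')$. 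I expect this last identity to be exactly the place where the normalization is designed to work out, and it is the main computational obstacle.

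Concretely, I would argue as follows. For $\mathbf{w}\in W$ with reduced expression $s_1\cdots s_n$, set $\Phi(T_{\mathbf{w}}^{(q)}) := \epsilon_{s_1}\cdots\epsilon_{s_n}\,T_{s_1}^{(q')}\cdots T_{s_n}^{(q')} = \epsilon_{\mathbf{w}} T_{\mathbf{w}}^{(q')}$. Since $\{T_{\mathbf{w}}^{(q)}\}$ and $\{T_{\mathbf{w}}^{(q')}\}$ are linear bases, $\Phi$ extends to a linear bijection $\mathbb{C}_q[W]\to\mathbb{C}_{q'}[W]$. To see it is a $\ast$-homomorphism it suffices, by \cite[Proposition 19.1.1]{Davis} and the universal property it records, to check that the images $\epsilon_s T_s^{(q')}$ satisfy the same relations as the $T_s^{(q)}$ — namely the quadratic relation coming from \eqref{multiplication} with $\mathbf{w}=s$ (which reads $(T_s^{(q)})^2 = T_e^{(q)} + p_s(q)T_s^{(q)}$) and the braid relations for pairs $s,t$. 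For the quadratic relation: $(\epsilon_s T_s^{(q')})^2 = (T_s^{(q')})^2 = 1 + p_s(q')T_s^{(q')}$, so one needs $p_s(q') = \epsilon_s p_s(q)$. When $\epsilon_s = 1$ this is trivial; when $\epsilon_s = -1$ one needs $p_s(q_s^{-1}) = -p_s(q_s)$, i.e. $q_s^{1/2}(q_s^{-1}-1) = -q_s^{-1/2}(q_s-1)$ — and indeed $q_s^{1/2}(q_s^{-1}-1) = q_s^{-1/2} - q_s^{1/2} = -q_s^{-1/2}(q_s - 1)$. (Note this forces the convention $p_s(q') = q_s'^{\,1/2}\cdot$ etc. to be read with $q_s' = q_s^{\epsilon_s}$; the identity $q_s'^{-1/2}(q_s'-1) = q_s^{-\epsilon_s/2}(q_s^{\epsilon_s}-1)$ and the computation above together give $p_s(q') = \epsilon_s p_s(q)$, as the paper's normalization is precisely tuned for this.) The braid relations are preserved because they are monomial identities $T_{s}T_{t}T_{s}\cdots = T_{t}T_{s}T_{t}\cdots$ (both sides reduced words for the same element), and multiplying through by the corresponding product of signs — which agree on the two sides since the multiset $\{s,t,s,\dots\}$ is the same — gives the relation for the $\epsilon_s T_s^{(q')}$.

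Having established that $\Phi\colon\mathbb{C}_q[W]\to\mathbb{C}_{q'}[W]$ is a unital $\ast$-isomorphism, the last step is to extend it to a C$^\ast$-isomorphism $C_{r,q}^\ast(W)\to C_{r,q'}^\ast(W)$. For this I would observe that $\Phi$ intertwines the canonical traces: $\tau_{q'}(\Phi(T_{\mathbf{w}}^{(q)})) = \epsilon_{\mathbf{w}}\tau_{q'}(T_{\mathbf{w}}^{(q')}) = 0 = \tau_q(T_{\mathbf{w}}^{(q)})$ for $\mathbf{w}\ne e$, and $\tau_{q'}(\Phi(1)) = 1 = \tau_q(1)$, so $\tau_{q'}\circ\Phi = \tau_q$ on $\mathbb{C}_q[W]$. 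Since both $\tau_q$ and $\tau_{q'}$ are faithful and the reduced C$^\ast$-algebras are exactly the images of the GNS representations of these traces, the isometry $\ell^2(W)\to\ell^2(W)$, $\delta_{\mathbf{w}}\mapsto \epsilon_{\mathbf{w}}\delta_{\mathbf{w}}$ is unitary and conjugation by it implements $\Phi$ on $\mathcal{B}(\ell^2(W))$, carrying $C_{r,q}^\ast(W)$ onto $C_{r,q'}^\ast(W)$. Alternatively, one invokes that a trace-preserving $\ast$-isomorphism between dense subalgebras with faithful traces extends to the reduced completions. The main obstacle, as noted, is purely the sign/normalization bookkeeping in verifying $p_s(q') = \epsilon_s p_s(q)$ and $\epsilon_s\epsilon_{\mathbf{w}} = \epsilon_{s\mathbf{w}}$ (using right-angled cancellation, or in general the word-invariance of the generator multiset); everything else is formal, which is why the paper simply cites \cite[Proposition 4.7]{Mario} for the details.
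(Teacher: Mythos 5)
The paper itself gives no argument here --- it only cites \cite[Proposition 4.7]{Mario} --- so there is no internal proof to compare against. Judged on its own, your argument is correct in substance and essentially the standard one; in fact your final paragraph already contains a complete, self-contained proof. The diagonal self-adjoint unitary $U\delta_{\mathbf{w}}=\epsilon_{\mathbf{w}}\delta_{\mathbf{w}}$ satisfies $UT_{s}^{(q)}U^{*}=\epsilon_{s}T_{s}^{(q^{\prime})}$ on basis vectors (the verification uses exactly your identity $p_{s}(q^{\prime})=\epsilon_{s}p_{s}(q)$, which you compute correctly: $q_{s}^{1/2}(q_{s}^{-1}-1)=q_{s}^{-1/2}-q_{s}^{1/2}=-q_{s}^{-1/2}(q_{s}-1)$), and conjugation by $U$ therefore carries $C_{r,q}^{\ast}(W)$ onto the C$^{\ast}$-algebra generated by the operators $\epsilon_{s}T_{s}^{(q^{\prime})}$, which is $C_{r,q^{\prime}}^{\ast}(W)$. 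The algebraic preamble via the universal property is then not strictly needed, though it is harmless.

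One justification you invoke twice is false as stated: the multiset of generators occurring in a reduced word is \emph{not} an invariant of $\mathbf{w}$ in a general Coxeter group. For $m_{st}=3$ one has $sts=tst$ with generator multisets $\left\{s,s,t\right\}$ and $\left\{s,t,t\right\}$. What is true, and what you actually need, is only that $\epsilon_{\mathbf{w}}:=\epsilon_{s_{1}}\cdots\epsilon_{s_{n}}$ is independent of the reduced expression: by the Matsumoto--Tits theorem any two reduced words differ by braid moves; a braid move with $m_{st}$ even preserves the multiset, while one with $m_{st}$ odd only exchanges occurrences of $s$ and $t$, which are then conjugate in $W$, so $\epsilon_{s}=\epsilon_{t}$ by the very definition of $\left\{-1,1\right\}^{(W,S)}$. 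This is the same reason the paper's $q_{\mathbf{w}}$ is well defined, and it also repairs your claim that the sign products on the two sides of a braid relation agree. With that correction the proof is complete.
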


The following decomposition follows from the universal property of the Hecke algebra and \cite[Lemma 1.1]{Mario}.

\begin{lemma} \label{isomorphism2}
Let $(W,S)$ be a Coxeter system which admits a non-trivial decomposition of the form $(W,S)=(W_{T}\times W_{T^{\prime}},T \sqcup T^{\prime})$. Set $q_{T}:=(q_{t})_{t\in T}$ and $q_{T^{\prime}}:=(q_{t})_{t\in T^{\prime}}$. Then for every $q\in\mathbb{R}_{>0}^{(W,S)}$ the corresponding Hecke algebra decomposes as an algebraic tensor product $\mathbb{C}_{q}\left[W\right]\cong\mathbb{C}_{q_{T}}\left[W_{T}\right]\odot\mathbb{C}_{q_{T^{\prime}}}\left[W_{T^{\prime}}\right]$ via
\begin{eqnarray}
\nonumber
T_{t}^{(q)}\mapsto \begin{cases}
T_{t}^{(q_{T})}\otimes1 & \text{, if } t\in T\\
1\otimes T_{t}^{(q_{T^{\prime}})} & \text{, if } t\in T^{\prime}
\end{cases}.
\end{eqnarray}
This induces C$^{\ast}$-algebraic and von Neumann-algebraic isomorphisms $C_{r,q}^{\ast}(W)\cong C_{r,q_{T}}^{\ast}(W_{T})\otimes C_{r,q_{T^{\prime}}}^{\ast}(W_{T^{\prime}})$ and $\mathcal{N}_{q}(W)\cong\mathcal{N}_{q_{T}}(W_{T})\overline{\otimes}\mathcal{N}_{q_{T^{\prime}}}(W_{T^{\prime}})$.
\end{lemma}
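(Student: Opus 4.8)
The plan is to verify the three pieces in sequence: first the algebraic tensor decomposition of $\mathbb{C}_{q}[W]$, then the C$^{\ast}$-algebraic statement, and finally the von Neumann algebraic one. For the algebraic part, recall that when $(W,S) = (W_{T}\times W_{T'}, T\sqcup T')$ every element $\mathbf{w}\in W$ factors uniquely as $\mathbf{w} = \mathbf{w}_{T}\mathbf{w}_{T'}$ with $\mathbf{w}_{T}\in W_{T}$, $\mathbf{w}_{T'}\in W_{T'}$, and the lengths add: $|\mathbf{w}| = |\mathbf{w}_{T}| + |\mathbf{w}_{T'}|$. The map $\phi$ sending $T_{t}^{(q)}$ to $T_{t}^{(q_{T})}\otimes 1$ for $t\in T$ and to $1\otimes T_{t}^{(q_{T'})}$ for $t\in T'$ extends to a $\ast$-homomorphism by the universal property of the Hecke algebra (\cite[Proposition 19.1.1]{Davis}): one only needs to check that the images satisfy the defining quadratic relations and braid relations, which they do because generators from $T$ and from $T'$ commute in $W$, so in the tensor product they live in commuting tensor legs, and within each leg the relations hold by construction. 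Conversely the assignment $T_{\mathbf{w}_{T}}^{(q_{T})}\otimes T_{\mathbf{w}_{T'}}^{(q_{T'})}\mapsto T_{\mathbf{w}_{T}\mathbf{w}_{T'}}^{(q)}$ is well-defined and linear on the tensor product of bases; using the length-additivity one checks $T_{\mathbf{w}_{T}}^{(q)}T_{\mathbf{w}_{T'}}^{(q)} = T_{\mathbf{w}_{T}\mathbf{w}_{T'}}^{(q)}$ in $\mathbb{C}_{q}[W]$ (since no cancellation occurs between a $T$-letter and a $T'$-letter, the multiplication rule \eqref{multiplication} never produces a lower-order term across the two blocks), and this shows $\phi$ is a bijection on basis elements, hence an isomorphism of $\ast$-algebras. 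This is essentially \cite[Lemma 1]{Mario}, which I would simply invoke.

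For the operator-algebraic statements, the key observation is that the $\ast$-representation of $\mathbb{C}_{q}[W]$ on $\ell^{2}(W)$ is compatible with the tensor decomposition under the natural unitary $\ell^{2}(W)\cong \ell^{2}(W_{T})\otimes\ell^{2}(W_{T'})$ induced by $\delta_{\mathbf{w}}\mapsto \delta_{\mathbf{w}_{T}}\otimes\delta_{\mathbf{w}_{T'}}$. Under this unitary, for $t\in T$ the operator $T_{t}^{(q)}$ acts on the first leg exactly as $T_{t}^{(q_{T})}$ acts on $\ell^{2}(W_{T})$ and as the identity on the second leg — this is immediate from the formula for $T_{s}^{(q)}\delta_{\mathbf{w}}$ together with the facts that $t\leq \mathbf{w}\iff t\leq\mathbf{w}_{T}$ and $t\mathbf{w} = (t\mathbf{w}_{T})\mathbf{w}_{T'}$ — and symmetrically for $t\in T'$. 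Consequently $\mathbb{C}_{q}[W]$, viewed inside $\mathcal{B}(\ell^{2}(W_{T})\otimes\ell^{2}(W_{T'}))$, is the algebraic span of products $x\otimes y$ with $x\in\mathbb{C}_{q_{T}}[W_{T}]$, $y\in\mathbb{C}_{q_{T'}}[W_{T'}]$, i.e. $\mathbb{C}_{q}[W] = \mathbb{C}_{q_{T}}[W_{T}]\odot\mathbb{C}_{q_{T'}}[W_{T'}]$ as concrete operator $\ast$-algebras. Taking norm closures gives a $\ast$-isomorphism $C_{r,q}^{\ast}(W)\cong C_{r,q_{T}}^{\ast}(W_{T})\otimes C_{r,q_{T'}}^{\ast}(W_{T'})$ onto the minimal tensor product: the reduced Hecke C$^{\ast}$-algebras are concretely represented, so the closure of $A\odot B$ in $\mathcal{B}(\mathcal{H}_{1}\otimes\mathcal{H}_{2})$ where $A\subseteq\mathcal{B}(\mathcal{H}_{1})$, $B\subseteq\mathcal{B}(\mathcal{H}_{2})$ are concretely represented C$^{\ast}$-algebras is by definition $A\otimes_{\min} B$. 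Likewise the weak-operator closure is the von Neumann algebra tensor product $\mathcal{N}_{q_{T}}(W_{T})\overline{\otimes}\mathcal{N}_{q_{T'}}(W_{T'})$, by the commutation theorem / definition of the spatial tensor product of von Neumann algebras.

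I do not anticipate a serious obstacle here — the statement is essentially bookkeeping — but the one point that genuinely needs care is the claim that the concrete operators $T_{t}^{(q)}$ ($t\in T$) are of the form $(\text{operator on }\ell^{2}(W_{T}))\otimes 1$ under the factorization unitary. The subtlety is purely combinatorial: one must confirm that for $\mathbf{w} = \mathbf{w}_{T}\mathbf{w}_{T'}$ and $t\in T$ one has $t\leq_{R}\mathbf{w}$ iff $t\leq_{R}\mathbf{w}_{T}$, and that $t\mathbf{w}$ again has its $T$-part equal to $t\mathbf{w}_{T}$ and $T'$-part unchanged. Both follow from the direct product structure, which forces any reduced word for $\mathbf{w}$ to be a shuffle of a reduced word for $\mathbf{w}_{T}$ and one for $\mathbf{w}_{T'}$, with letters from the two blocks commuting freely; in particular $|t\mathbf{w}| = |\mathbf{w}|\pm 1$ according to whether $|t\mathbf{w}_{T}| = |\mathbf{w}_{T}|\mp 1$. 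Once this is noted, everything else is formal, and I would present the proof compactly by citing \cite[Lemma 1]{Mario} for the algebraic isomorphism and then spelling out the one-paragraph argument that passes to the respective closures.
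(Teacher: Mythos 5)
Your argument is correct and follows exactly the route the paper itself indicates: the algebraic isomorphism via the universal property of the Hecke algebra together with \cite[Lemma 1]{Mario}, and the passage to the C$^{\ast}$- and von Neumann-algebraic statements via the spatial identification $\ell^{2}(W)\cong\ell^{2}(W_{T})\otimes\ell^{2}(W_{T^{\prime}})$. The combinatorial point you flag (that $t\leq\mathbf{w}$ iff $t\leq\mathbf{w}_{T}$ for $t\in T$, by length-additivity of the factorization $\mathbf{w}=\mathbf{w}_{T}\mathbf{w}_{T^{\prime}}$) is indeed the only step needing verification, and you verify it correctly.
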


Proposition \ref{isomorphism} and Lemma \ref{isomorphism2} allow to restrict in the treatment of the question for the simplicity of Hecke C$^{\ast}$-algebras to irreducible Coxeter systems and multi-parameters $q=(q_{s})_{s\in S}\in\mathbb{R}_{>0}^{(W,S)}$ with $0<q_{s}\leq1$. Since simplicity of C$^{\ast}$-algebras is preserved by inductive limits, it further suffices to consider finite rank Coxeter groups, as the following lemma illustrates. It easily follows from \cite[Lemma 19.2.2]{Davis}.

\begin{lemma}\label{isomorphism3}
Let $(W,S)$ be a Coxeter system, $q=(q_{s})_{s\in S}\in\mathbb{R}_{>0}^{(W,S)}$, $T_0 \subseteq S$ finite and $\mathcal{S}:=\left\{ T \subseteq S\mid T \text{ finite with }T_0 \subseteq T \right\}$. For $T \in \mathcal{S}$ set $q_{T}:=(q_t)_{t \in T}$.  Then 
\begin{eqnarray}
\nonumber
\left\{ (C_{r,q_{T}}^{\ast}(W_{T}),\phi_{T,T^{\prime}})\mid T,T^{\prime}\in\mathcal{S}\text{ with }T\subseteq T^{\prime}\right\}
\end{eqnarray}
with $\phi_{T,T^{\prime}}(T_{t}^{(q_{T})}):=T_{t}^{(q_{T^{\prime}})}$ for $t\in T$ defines an inductive system with $C_{r,q}^{\ast}(W)\cong\underrightarrow{\lim}C_{r,q_T}^{\ast}(W_{T})$.
\end{lemma}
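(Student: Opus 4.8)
The plan is to exhibit the system $\{(C_{r,q_{T}}^{\ast}(W_{T}),\phi_{T,T^{\prime}})\}$ explicitly and verify the universal property of the inductive limit against the concrete realization of $C_{r,q}^{\ast}(W)$ on $\ell^2(W)$. First I would check that $\mathcal{S}$, ordered by inclusion, is directed: the union of two finite subsets of $S$ is again a finite subset, so the index set is filtered. Next, for $T\subseteq T'$ in $\mathcal{S}$, the inclusion $T\hookrightarrow T'$ realizes $W_T$ as a special subgroup of $W_{T'}$ (by \cite[Theorem 4.1.6]{Davis}), and since reduced words in $W_T$ remain reduced in $W_{T'}$, the defining relations \eqref{multiplication} for the generators $T_t^{(q_T)}$, $t\in T$, are literally the same relations satisfied by $T_t^{(q_{T'})}$ in $\mathbb{C}_{q_{T'}}[W_{T'}]$ (note $p_t(q_T)=p_t(q_{T'})$ since the parameter at $t$ is unchanged, and the Bruhat order on $W_{T'}$ restricts to that on $W_T$). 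Hence by the universal property of $\mathbb{C}_{q_T}[W_T]$ the assignment $\phi_{T,T'}$ extends to a unital $\ast$-homomorphism $\mathbb{C}_{q_T}[W_T]\to\mathbb{C}_{q_{T'}}[W_{T'}]$, and on the linear basis it sends $T_{\mathbf{w}}^{(q_T)}\mapsto T_{\mathbf{w}}^{(q_{T'})}$ for $\mathbf{w}\in W_T$; in particular it is injective. The compatibility $\phi_{T',T''}\circ\phi_{T,T'}=\phi_{T,T''}$ for $T\subseteq T'\subseteq T''$ is immediate on generators.

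The next step is to upgrade $\phi_{T,T'}$ to a (necessarily injective, unital) $\ast$-homomorphism of C$^{\ast}$-algebras $C_{r,q_T}^{\ast}(W_T)\to C_{r,q_{T'}}^{\ast}(W_{T'})$. The cleanest way is to observe that the inclusion $W_T\hookrightarrow W_{T'}$ induces an isometric inclusion $\ell^2(W_T)\hookrightarrow\ell^2(W_{T'})$ onto the closed span of $\{\delta_{\mathbf{w}}:\mathbf{w}\in W_T\}$, that this subspace is invariant under each $T_t^{(q_{T'})}$ for $t\in T$ (again because, for $\mathbf{w}\in W_T$, both $t\mathbf{w}$ and the condition $t\le\mathbf{w}$ stay inside $W_T$), and that the restriction of $T_t^{(q_{T'})}$ to $\ell^2(W_T)$ is exactly $T_t^{(q_T)}$. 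Thus $\mathbb{C}_{q_T}[W_T]$ acts on $\ell^2(W_T)$ as a corner of the action of (a subalgebra of) $\mathbb{C}_{q_{T'}}[W_{T'}]$ on $\ell^2(W_{T'})$; compressing a $\ast$-homomorphism to an invariant subspace is $\ast$-homomorphic and norm-decreasing, and since the GNS/defining representation on $\ell^2(W_T)$ is faithful with $\tau_{q_T}$ the vector state at $\delta_e$, the map $\phi_{T,T'}$ is in fact isometric on $\mathbb{C}_{q_T}[W_T]$. Hence it extends to an isometric $\ast$-embedding of the norm closures, giving the inductive system of C$^{\ast}$-algebras.

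Finally I would identify the limit. The same compression argument applies to the pair $(W_T\subseteq W)$ for any $T\in\mathcal{S}$: there is an isometric $\ast$-embedding $\phi_T\colon C_{r,q_T}^{\ast}(W_T)\hookrightarrow C_{r,q}^{\ast}(W)$ with $\phi_T(T_t^{(q_T)})=T_t^{(q)}$ and $\phi_{T'}\circ\phi_{T,T'}=\phi_T$ for $T\subseteq T'$. These maps induce a $\ast$-homomorphism $\Phi\colon\underrightarrow{\lim}\,C_{r,q_T}^{\ast}(W_T)\to C_{r,q}^{\ast}(W)$. For surjectivity of $\Phi$, note that every $\mathbf{w}\in W$ has a reduced expression using only finitely many generators, so $\bigcup_{T\in\mathcal{S}}\mathbb{C}_{q_T}[W_T]$ maps onto $\mathbb{C}_q[W]$, which is norm-dense in $C_{r,q}^{\ast}(W)$; hence the range of $\Phi$ is dense, and being a $\ast$-homomorphism of C$^{\ast}$-algebras with dense range whose domain is a C$^{\ast}$-algebra, it is surjective (its range is closed). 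For injectivity, one uses that the connecting maps are isometric, so the canonical maps $C_{r,q_T}^{\ast}(W_T)\to\underrightarrow{\lim}$ are isometric, and $\Phi$ restricted to each is the isometric $\phi_T$; since $\bigcup_T C_{r,q_T}^{\ast}(W_T)$ is dense in the limit and $\Phi$ is isometric there, $\Phi$ is isometric, hence injective. Therefore $\Phi$ is a $\ast$-isomorphism, proving $C_{r,q}^{\ast}(W)\cong\underrightarrow{\lim}\,C_{r,q_T}^{\ast}(W_T)$.

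The only mildly delicate point — and the one I would state carefully rather than wave at — is the claim that $\phi_{T,T'}$ (and $\phi_T$) is \emph{isometric}, not merely bounded: this is where one must use that $W_T$ is a special subgroup, so that the defining $\ell^2$-representation of $\mathbb{C}_{q_T}[W_T]$ coincides with the compression to an invariant subspace of the defining representation of $\mathbb{C}_{q_{T'}}[W_{T'}]$, which forces equality of the reduced C$^{\ast}$-norms. Everything else is a routine check of the universal property of a C$^{\ast}$-inductive limit.
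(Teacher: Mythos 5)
The paper states this lemma without proof, so the only question is whether your argument is complete. Its overall architecture (directedness of $\mathcal{S}$, algebraic embeddings via the universal property, density of $\bigcup_T\mathbb{C}_{q_T}[W_T]$ in $\mathbb{C}_q[W]$, closed range and isometry of $\Phi$) is sound, but there is a genuine gap at exactly the point you flag as delicate: the claim that $\phi_{T,T'}$ is isometric for the reduced norms. Your compression argument shows that $\ell^2(W_T)$ is a reducing subspace for the operators $T_t^{(q_{T'})}$, $t\in T$, and that the restriction of $\phi_{T,T'}(x)$ to it is $x$ in its defining representation; since compression to a reducing subspace is norm-decreasing this yields only
$\left\Vert x\right\Vert _{\mathcal{B}(\ell^{2}(W_{T}))}\leq\left\Vert \phi_{T,T^{\prime}}(x)\right\Vert _{\mathcal{B}(\ell^{2}(W_{T^{\prime}}))}$.
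That is the inequality responsible for injectivity, not the one you need first: without the reverse inequality $\left\Vert \phi_{T,T^{\prime}}(x)\right\Vert \leq\left\Vert x\right\Vert$ the map $\phi_{T,T'}$ is not even known to be bounded for the reduced norms, hence need not extend to $C_{r,q_T}^{\ast}(W_T)$ at all. Neither faithfulness of the defining representation nor the identity $\tau_{q_{T'}}\circ\phi_{T,T'}=\tau_{q_T}$ supplies this; the situation is the same as for $C_r^{\ast}(H)\subseteq C_r^{\ast}(G)$, where one must know that $\lambda_G|_H$ is a multiple of $\lambda_H$, not merely that it contains it.

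The standard repair uses minimal-length coset representatives: writing $W_{T'}=\bigsqcup_{\mathbf{d}\in D}W_T\mathbf{d}$ with $D$ the set of minimal-length representatives of $W_T\backslash W_{T'}$, every $\mathbf{v}\in W_{T'}$ factors uniquely as $\mathbf{v}=\mathbf{w}\mathbf{d}$ with $\mathbf{w}\in W_T$, $\mathbf{d}\in D$ and $|\mathbf{v}|=|\mathbf{w}|+|\mathbf{d}|$. One checks (using that word length and the condition $t\leq\mathbf{w}$ for $t\in T$, $\mathbf{w}\in W_T$ are the same computed in $W_T$ or $W_{T'}$) that each subspace $\overline{\operatorname{Span}}\{\delta_{\mathbf{w}\mathbf{d}}\mid\mathbf{w}\in W_T\}$ is invariant under $T_t^{(q_{T'})}$ for $t\in T$, and that the unitary $\delta_{\mathbf{w}\mathbf{d}}\mapsto\delta_{\mathbf{w}}\otimes\delta_{\mathbf{d}}$ identifies $\phi_{T,T'}(x)$ with $x\otimes 1$ on $\ell^2(W_T)\otimes\ell^2(D)$. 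This gives equality of norms in both directions, and the same decomposition applied to $W_T\subseteq W$ handles $\phi_T$. With that inserted, the rest of your proof goes through as written.
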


\vspace{1mm}


\section{The C$^{\ast}$-algebras $\mathcal{D}(W,S)$ and $\mathfrak{A}(W)$} \label{2}

The aim of this section is to recall the construction of the C$^{\ast}$-algebras $\mathcal{D}(W,S)$ and $\mathfrak{A}(W)$ associated with a given Coxeter system $(W,S)$ which appears in \cite[Section 4]{Mario2}. We will further prove a number of technical statements which will play a role in the later sections.\\

Let $(W,S)$ be a finite rank Coxeter system and define for $\mathbf{w}\in W$, $P_{\mathbf{w}}\in\ell^{\infty}(W)\subseteq\mathcal{B}(\ell^{2}(W))$ to be the orthogonal projection onto $\overline{\text{Span}\left\{ \delta_{\mathbf{v}}\mid\mathbf{v}\in W\text{ with }\mathbf{w}\leq\mathbf{v}\right\} }\subseteq\ell^{2}(W)$. Note that $P_{e}=1$.

\begin{remark} \label{remark}
Recall that $W$ equipped with the weak right Bruhat order defines a complete meet-semilattice. If existent, denote the corresponding join of two elements $\mathbf{v},\mathbf{w}\in W$ by $\mathbf{v}\vee\mathbf{w}$. We then have $P_{\mathbf{v}}P_{\mathbf{w}}=P_{\mathbf{v}\vee\mathbf{w}}$ for all $\mathbf{v},\mathbf{w}\in W$ where we assume that $P_{\mathbf{v}\vee\mathbf{w}}=0$ if the join $\mathbf{v}\vee\mathbf{w}$ does not exist. In particular, the equalities $P_{s}P_{t}=0$ (i.e. $P_{s}$ and $P_{t}$ are orthogonal to each other) if $m_{st}=\infty$ and $P_{s}P_{t}=P_{st}$ if $m_{st}=2$ hold (this follows for instance from \cite[Lemma 4.3.3]{Davis}).
\end{remark}

Denote the quotient map of $\mathcal{B}(\ell^{2}(W))$ onto $\mathcal{B}(\ell^{2}(W))/\mathcal{K}$ by $\pi$ where $\mathcal{K}:=\mathcal{K}(\ell^{2}(W))$ is the ideal of compact operators in $\mathcal{B}(\ell^{2}(W))$ and write $\tilde{P}_{\mathbf{w}}:=\pi(P_{\mathbf{w}})$ for $\mathbf{w}\in W$. It was shown in \cite[Proposition 2.6]{Mario2} that the commutative C$^{\ast}$-algebra $\mathcal{D}(W,S)$ generated by all projections $P_{\mathbf{w}}$, $\mathbf{w}\in W$ identifies with $C(\overline{(W,S)})$ via $P_{\mathbf{w}}\mapsto\chi_{\mathcal{U}_{\mathbf{w}}}$.
Similarly, by \cite[Lemma 2.3 and Proposition 2.6]{Mario2}, $\pi(\mathcal{D}(W,S))\cong C(\partial(W,S))$ via $\tilde{P}_{\mathbf{w}}\mapsto\chi_{\mathcal{U}_{\mathbf{w}}\cap\partial(W,S)}$. Further, let $\mathfrak{A}(W)$ be the C$^{\ast}$-subalgebra of $\mathcal{B}(\ell^{2}(W))$ generated by the reduced group C$^{\ast}$-algebra $C_{r}^{\ast}(W)$ and $\mathcal{D}(W,S)$. Since for $q=(q_{s})_{s\in S}\in\mathbb{R}_{>0}^{(W,S)}$ and $s\in S$ the operator $T_{s}^{(q)}$ decomposes as $T_{s}^{(1)}+p_{s}(q)P_{s}$, we have an inclusion of the corresponding Hecke C$^{\ast}$-algebra $C_{r,q}^{\ast}(W)\subseteq\mathfrak{A}(W)$. In fact, $\mathfrak{A}(W)$ is the smallest C$^{\ast}$-subalgebra of $\mathcal{B}(\ell^{2}(W))$ containing all Hecke C$^{\ast}$-algebras of $(W,S)$. It naturally identifies with the reduced crossed product C$^{\ast}$-algebra of the action $W\curvearrowright\overline{(W,S)}$ via
\begin{eqnarray}
\nonumber
\iota\text{: }\mathfrak{A}(W)\cong C(\overline{(W,S)})\rtimes_{r}W\text{, }P_{\mathbf{w}}\mapsto\chi_{\mathcal{U}_{\mathbf{w}}}\text{ and }T_{\mathbf{w}}^{(1)}\mapsto\lambda_{\mathbf{w}}\text{,}
\end{eqnarray}
where $\lambda$ denotes the left-regular representation of $W$. In a similar way $\pi(\mathfrak{A}(W))$ identifies with $C(\partial(W,S)) \rtimes_{r} W$ via
\begin{eqnarray}
\nonumber
\kappa\text{: }\pi(\mathfrak{A}(W))\cong C(\partial(W,S)) \rtimes_{r} W\text{, }\tilde{P}_{\mathbf{w}}\mapsto\chi_{\mathcal{U}_{\mathbf{w}}\cap\partial(W,S)}\text{ and }\pi(T_{\mathbf{w}}^{(1)})\mapsto\lambda_{\mathbf{w}}\text{.}
\end{eqnarray}
These maps are $W$-equivariant with respect to the action of $W$ on $\mathfrak{A}(W)$ defined by $\mathbf{w}.x:=T_{\mathbf{w}}^{(1)}xT_{\mathbf{w}^{-1}}^{(1)}$ for $\mathbf{w}\in W$, $x\in\mathfrak{A}(W)$ and the action of $W$ on $\pi(\mathfrak{A}(W))$ defined by $\mathbf{w}.x:=$ \: $\pi(T_{\mathbf{w}}^{(1)})x\pi(T_{\mathbf{w}^{-1}}^{(1)})$ for $\mathbf{w}\in W$, $x\in\pi(\mathfrak{A}(W))$. Denote the \emph{region of convergence} of the multi-variate growth series $W(z):=\sum_{\mathbf{w}\in W}z_{\mathbf{w}}$ by
\begin{eqnarray}
\nonumber
\mathcal{R}:=\{ z\in\mathbb{C}^{(W,S)}\mid W(z)\text{ converges}\},
\end{eqnarray}
set
\begin{eqnarray}
\nonumber
\mathcal{R}^{\prime}:=\{ (q_{s}^{\epsilon_{s}})_{s\in S}\mid q\in\mathcal{R}\cap\mathbb{R}_{>0}^{(W,S)}\text{, }\epsilon\in\left\{ -1,1\right\} ^{(W,S)}\}
\end{eqnarray}
and let $\overline{\mathcal{R}^{\prime}}$ be the closure of $\mathcal{R}^{\prime}$ in $\mathbb{R}_{>0}^{(W,S)}$. By \cite[Corollary 4.4]{Mario2}, for $q\in\mathbb{R}_{>0}^{(W,S)}\setminus\mathcal{R}^{\prime}$ the restriction of $\kappa\circ\pi$ to $C_{r,q}^{\ast}(W)$ factors to an embedding of $C_{r,q}^{\ast}(W)$ into $C(\partial(W,S)) \rtimes_{r} W$. We will therefore often view $C_{r,q}^{\ast}(W)$ with $q\in\mathbb{R}_{>0}^{(W,S)}\setminus\mathcal{R}^{\prime}$ as a C$^{\ast}$-subalgebra of $C(\partial(W,S)) \rtimes_{r} W$ and of $\pi(\mathfrak{A}(W))$.


\subsection{Elementary properties of the action $W\curvearrowright\mathcal{D}(W,S)$}

Let us proceed with some technical statements which will play a role in the following sections.

\begin{proposition} \label{action}
Let $(W,S)$ be a right-angled Coxeter system and $\mathbf{w}\in W$, $s\in S$. Then the following equalities hold:

\begin{enumerate}
\item $s.P_{\mathbf{w}}=P_{s\mathbf{w}}$ if $\mathbf{w}\notin C_{W}(s)$;
\item $s.P_{\mathbf{w}}=P_{s\mathbf{w}}-P_{\mathbf{w}}$ if $\mathbf{w}\in C_{W}(s)$  and $s\leq\mathbf{w}$;
\item $s.P_{\mathbf{w}}=P_{\mathbf{w}}$ if $\mathbf{w}\in C_{W}(s)$ and $s\nleq\mathbf{w}$.
\end{enumerate}
Here $C_{W}(s):=\left\{ \mathbf{v}\in W\mid s\mathbf{v}=\mathbf{v}s\right\}$  denotes the centralizer of $s$ in $W$.
\end{proposition}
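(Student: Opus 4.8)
The plan is to turn the three equalities into equalities of subsets of $W$ and then read them off from the weak–order combinatorics of right-angled systems. Since $p_{s}(1)=0$, the Hecke operator $T_{s}^{(1)}$ is just the (unitary) left translation operator $\lambda_{s}$ on $\ell^{2}(W)$, so $s.P_{\mathbf{w}}=T_{s}^{(1)}P_{\mathbf{w}}T_{s^{-1}}^{(1)}=\lambda_{s}P_{\mathbf{w}}\lambda_{s}^{*}$. Writing $U_{\mathbf{w}}:=\{\mathbf{v}\in W\mid\mathbf{w}\leq\mathbf{v}\}$, so that $P_{\mathbf{w}}$ is the projection onto $\ell^{2}(U_{\mathbf{w}})$ (for $A\subseteq W$ I write $\ell^{2}(A)\subseteq\ell^{2}(W)$ for the closed span of $\{\delta_{\mathbf{v}}\mid\mathbf{v}\in A\}$), a one–line computation on basis vectors shows that $\lambda_{s}P_{\mathbf{w}}\lambda_{s}^{*}$ is the projection onto $\ell^{2}(sU_{\mathbf{w}})$, where $sU_{\mathbf{w}}:=\{s\mathbf{v}\mid\mathbf{v}\in U_{\mathbf{w}}\}=\{\mathbf{v}\mid\mathbf{w}\leq s\mathbf{v}\}$. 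Hence the three cases are equivalent to: $sU_{\mathbf{w}}=U_{s\mathbf{w}}$ when $\mathbf{w}\notin C_{W}(s)$; $U_{\mathbf{w}}\subseteq U_{s\mathbf{w}}$ together with $sU_{\mathbf{w}}=U_{s\mathbf{w}}\setminus U_{\mathbf{w}}$ when $\mathbf{w}\in C_{W}(s)$ and $s\leq\mathbf{w}$; and $sU_{\mathbf{w}}=U_{\mathbf{w}}$ when $\mathbf{w}\in C_{W}(s)$ and $s\nleq\mathbf{w}$. In the middle case the inclusion is precisely what makes $P_{s\mathbf{w}}-P_{\mathbf{w}}$ the projection onto $\ell^{2}(U_{s\mathbf{w}}\setminus U_{\mathbf{w}})$.

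Two combinatorial ingredients then suffice. First, partition $W=W_{s-}\sqcup W_{s+}$ into the elements that do, respectively do not, start with $s$; left multiplication by $s$ is an involution interchanging $W_{s-}$ and $W_{s+}$, and Theorem \ref{help} says exactly that for $\mathbf{a},\mathbf{b}\in W_{s-}$ one has $\mathbf{a}\leq\mathbf{b}$ iff $s\mathbf{a}\leq s\mathbf{b}$; equivalently, for $\mathbf{a}\in W_{s-}$ the map $\mathbf{v}\mapsto s\mathbf{v}$ sends $U_{\mathbf{a}}$ (which lies entirely in $W_{s-}$) bijectively onto $U_{s\mathbf{a}}\cap W_{s+}$. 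Moreover, since $(W,\leq)$ is a complete meet-semilattice, $U_{\mathbf{w}}\cap W_{s-}=U_{\mathbf{w}\vee s}$, with the convention that this is empty when $\mathbf{w}\vee s$ does not exist. Second — and this is the single place where right-angledness is genuinely used — one has
\[
(\star)\qquad\text{if }s\nleq\mathbf{w},\ \text{then }\mathbf{w}\vee s\ \text{exists iff}\ s\mathbf{w}=\mathbf{w}s,\ \text{and in that case}\ \mathbf{w}\vee s=s\mathbf{w}.
\]

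I would prove $(\star)$ from the right-angled deletion/cancellation condition recalled just before Theorem \ref{help} (together with the standard fact that $C_{W}(s)$ is the direct product of $\langle s\rangle$ with the special subgroup generated by the generators commuting with $s$): if $\mathbf{v}$ is a common upper bound of $\mathbf{w}$ and $s$, then it has a reduced expression beginning with a reduced expression of $\mathbf{w}$ and, by right-angledness (all reduced expressions are related by commutations of adjacent commuting generators), also one beginning with $s$; since $s\nleq\mathbf{w}$, the leading $s$ of the latter must originate to the right of $\mathbf{w}$ and commute through all of $\mathbf{w}$, which forces $s$ to commute with every letter of $\mathbf{w}$ and — as $s\nleq\mathbf{w}$ — to lie outside $\operatorname{supp}(\mathbf{w})$, whence $s\mathbf{w}=\mathbf{w}s$; the same argument, applied to an arbitrary common upper bound $\mathbf{v}=\mathbf{w}\mathbf{u}$ (reduced) with $s\leq\mathbf{v}$, gives $s\leq\mathbf{u}$ and hence a reduced expression of $\mathbf{v}$ beginning with $s\mathbf{w}$, so $s\mathbf{w}$ is the least common upper bound. (For $m_{st}\geq3$ this fails: $s$ and $t$ have join $sts=tst$ although they do not commute — which is exactly why the proposition is restricted to the right-angled case.)

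Granting $(\star)$, the three cases are short bookkeeping. If $\mathbf{w}\in C_{W}(s)$ and $s\leq\mathbf{w}$, then $U_{\mathbf{w}}\subseteq W_{s-}$, so $sU_{\mathbf{w}}=U_{s\mathbf{w}}\cap W_{s+}$ by the first ingredient; applying $(\star)$ to $s\mathbf{w}$ (which lies in $C_{W}(s)$ and does not start with $s$) gives $(s\mathbf{w})\vee s=\mathbf{w}$, hence $U_{s\mathbf{w}}\cap W_{s-}=U_{\mathbf{w}}$, so $U_{\mathbf{w}}\subseteq U_{s\mathbf{w}}$ and $sU_{\mathbf{w}}=U_{s\mathbf{w}}\setminus U_{\mathbf{w}}$. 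If $\mathbf{w}\in C_{W}(s)$ and $s\nleq\mathbf{w}$, then $U_{\mathbf{w}}\cap W_{s-}=U_{\mathbf{w}\vee s}=U_{s\mathbf{w}}$ by $(\star)$, and the first ingredient (applied with $\mathbf{a}=s\mathbf{w}\in W_{s-}$) sends $U_{s\mathbf{w}}=U_{\mathbf{w}}\cap W_{s-}$ onto $U_{\mathbf{w}}\cap W_{s+}$; being an involution, $\mathbf{v}\mapsto s\mathbf{v}$ therefore permutes the two pieces of $U_{\mathbf{w}}=(U_{\mathbf{w}}\cap W_{s-})\sqcup(U_{\mathbf{w}}\cap W_{s+})$, so $sU_{\mathbf{w}}=U_{\mathbf{w}}$. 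Finally, if $\mathbf{w}\notin C_{W}(s)$: when $s\leq\mathbf{w}$ one again gets $sU_{\mathbf{w}}=U_{s\mathbf{w}}\cap W_{s+}$, while $(\star)$ applied to $s\mathbf{w}\notin C_{W}(s)$ shows $(s\mathbf{w})\vee s$ does not exist, so $U_{s\mathbf{w}}\cap W_{s-}=\emptyset$ and $sU_{\mathbf{w}}=U_{s\mathbf{w}}$; when $s\nleq\mathbf{w}$, $(\star)$ shows $\mathbf{w}\vee s$ does not exist, so $U_{\mathbf{w}}\subseteq W_{s+}$, whence both $sU_{\mathbf{w}}$ and $U_{s\mathbf{w}}$ lie in $W_{s-}$ (note $s\leq s\mathbf{w}$ here), and Theorem \ref{help} identifies them there. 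The only real obstacle is $(\star)$; everything else is bookkeeping with Theorem \ref{help} and the meet-semilattice structure.
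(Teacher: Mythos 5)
Your proof is correct and is essentially the paper's argument in set-theoretic clothing: identifying $s.P_{\mathbf{w}}$ with the projection onto $\ell^{2}(sU_{\mathbf{w}})$ and splitting $W$ by whether elements start with $s$ is the same as the paper's evaluation on basis vectors $\delta_{\mathbf{v}}$ combined with the decomposition $1=P_{s}+(1-P_{s})$, with Theorem \ref{help} handling the mixed cases in both versions. Your lemma $(\star)$ cleanly packages the right-angled cancellation arguments that the paper carries out inline (e.g.\ ``$s\mathbf{w}\leq\mathbf{v}$ forces $\mathbf{w}\in C_{W}(s)$'' in case (1)), which is a nice organizational improvement but not a different route.
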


\begin{proof}
First observe that by Proposition \ref{help} for all $s\in S$ and $\mathbf{v},\mathbf{w}\in W$ with $s\leq\mathbf{w}$, $s\nleq\mathbf{v}$ or $s\nleq\mathbf{w}$, $s\leq\mathbf{v}$,
 \begin{eqnarray} \label{equality}
(s.P_{\mathbf{w}})\delta_{\mathbf{v}}=T_{s}^{(1)}P_{\mathbf{w}}\delta_{s\mathbf{v}}=
\begin{cases}
\delta_{\mathbf{v}} & \text{, if } \mathbf{w} \leq s\mathbf{v}\\
0 & \text{, if } \mathbf{w} \nleq s\mathbf{v}
\end{cases}
=
\begin{cases}
\delta_{\mathbf{v}} & \text{, if } s\mathbf{w} \leq \mathbf{v}\\
0 & \text{, if } s\mathbf{w} \nleq \mathbf{v}
\end{cases}
=P_{s\mathbf{w}}\delta_{\mathbf{v}}\text{.}
\end{eqnarray}
We will cover the remaining cases in the following.\\

\emph{(1)}: Assume that $\mathbf{w}\notin C_{W}(s)$. If $s\leq\mathbf{w}$ and $s\leq\mathbf{v}$, then $\mathbf{w}\nleq s\mathbf{v}$ and $s\mathbf{w}\nleq\mathbf{v}$. Indeed, if we assume that $\mathbf{w}\leq s\mathbf{v}$, then $s\leq s\mathbf{v}$ in contradiction to $s\nleq s\mathbf{v}$. Further, if we assume that $s\mathbf{w}\leq\mathbf{v}$, then there exists $\mathbf{u}\in W$ with $\mathbf{v}=(s\mathbf{w})\mathbf{u}$ and $\left|\mathbf{v}\right|=\left|s\mathbf{w}\right|+\left|\mathbf{u}\right|$. Since $(W,S)$ is right-angled $s\leq\mathbf{v}$ implies that $s\leq\mathbf{u}$ and $s\mathbf{w}\in C_{W}(s)$. But then $\mathbf{w}\in C_{W}(s)$ in contradiction to the assumption $\mathbf{w}\notin C_{W}(s)$. We get that $(s.P_{\mathbf{w}})\delta_{\mathbf{v}}=0=P_{s\mathbf{w}}\delta_{\mathbf{v}}$.

If $s\nleq\mathbf{w}$ and $s\nleq\mathbf{v}$, then one obtains in the same way $\mathbf{w}\nleq s\mathbf{v}$ and $s\mathbf{w}\nleq\mathbf{v}$ which implies $(s.P_{\mathbf{w}})\delta_{\mathbf{v}}=0=P_{s\mathbf{w}}\delta_{\mathbf{v}}$. With \eqref{equality} this covers all possible cases. Hence, $s.P_{\mathbf{w}}=P_{s\mathbf{w}}$.

\emph{(2)}: Assume that $\mathbf{w}\in C_{W}(s)$ and $s\leq\mathbf{w}$. If $s\nleq\mathbf{v}$, then \eqref{equality} implies $(s.P_{\mathbf{w}})\delta_{\mathbf{v}}=P_{s\mathbf{w}}\delta_{\mathbf{v}}$ and hence $(s.P_{\mathbf{w}})(1-P_{s})=P_{s\mathbf{w}}(1-P_{s})$. If $s\leq\mathbf{v}$, then $\mathbf{w}\nleq s\mathbf{v}$ implies $(s.P_{\mathbf{w}})\delta_{\mathbf{v}}=0$ and hence $(s.P_{\mathbf{w}})P_{s}=0$. Combined this leads to
\begin{eqnarray}
\nonumber
s.P_{\mathbf{w}}=(s.P_{\mathbf{w}})(1-P_{s})+(s.P_{\mathbf{w}})P_{s}=P_{s\mathbf{w}}(1-P_{s})=P_{s\mathbf{w}}-P_{s\mathbf{w}\vee s}=P_{s\mathbf{w}}-P_{\mathbf{w}}\text{.}
\end{eqnarray}

\emph{(3)}: Assume that $\mathbf{w}\in C_{W}(s)$ and $s\nleq\mathbf{w}$. If $s\leq\mathbf{v}$, then $(s.P_{\mathbf{w}})\delta_{\mathbf{v}}=P_{s\mathbf{w}}\delta_{\mathbf{v}}=P_{\mathbf{w}\vee s}\delta_{\mathbf{v}}=P_{\mathbf{w}}P_{s}\delta_{\mathbf{v}}=P_{\mathbf{w}}\delta_{\mathbf{v}}$ by \eqref{equality}. So consider the case where $s\nleq\mathbf{v}$. If $\mathbf{w}\leq\mathbf{v}$, then $\mathbf{v}=\mathbf{w}\mathbf{u}$ for some $\mathbf{u}\in W$ with $\left|\mathbf{v}\right|=\left|\mathbf{w}\right|+\left|\mathbf{u}\right|$ and $s\nleq\mathbf{u}$. Hence, $s\mathbf{v}=\mathbf{w}(s\mathbf{u})\geq\mathbf{w}$. Conversely, if $\mathbf{w}\leq s\mathbf{v}$, then $s\mathbf{v}=\mathbf{w}\mathbf{u}$ for some $\mathbf{u}\in W$ with $\left|s\mathbf{v}\right|=\left|\mathbf{w}\right|+\left|\mathbf{u}\right|$ and $s\leq\mathbf{u}$. We get that $\mathbf{v}=s(s\mathbf{v})=\mathbf{w}(s\mathbf{u})\geq\mathbf{w}$. Together this gives $(s.P_{\mathbf{w}})\delta_{\mathbf{v}}=P_{\mathbf{w}}\delta_{\mathbf{v}}$, i.e. $s.P_{\mathbf{w}}=P_{\mathbf{w}}$ as claimed.
\end{proof}

\begin{remark} \label{remark2}
Let $(W,S)$ be a right-angled Coxeter system, $q\in\mathbb{R}_{>0}^{(W,S)}$ and $s\in S$, $\mathbf{w}\in W$. Recall that $T_{s}^{(q)}=T_{s}^{(1)}+p_{s}(q)P_{s}$ for $q\in\mathbb{R}_{>0}^{(W,S)}$. In combination with Remark \ref{remark} the Proposition \ref{action} leads to a description of the conjugation of the generating projections in $\mathcal{D}(W,S)$ with the Hecke operators $T_{s}^{(q)}$, $s\in S$. In particular, for $s\in S$, $\mathbf{w}\in W$ with $\mathbf{w}\notin C_{W}(s)$ and $s\nleq\mathbf{w}$ the identities
\begin{eqnarray}
\nonumber
T_{s}^{(q)}(1-P_{s})T_{s}^{(q)}=T_{s}^{(1)}(1-P_{s})T_{s}^{(1)}=P_{s}
\end{eqnarray}
and
\begin{eqnarray}
\nonumber
T_{s}^{(q)}P_{\mathbf{w}}T_{s}^{(q)}=T_{s}^{(1)}P_{\mathbf{w}}T_{s}^{(1)}=P_{s\mathbf{w}}
\end{eqnarray}
hold.
\end{remark}


\subsection{Paths in the Coxeter diagram of right-angled Coxeter groups}

To simplify the statements and proofs of later sections, we introduce the following notion which already implicitly appears in the proof of \cite[Theorem 3.20]{Mario2}.

\begin{definition}
Let $(W,S)$ be a right-angled finite rank Coxeter system. A \emph{path $s_{1}...s_{n}\in W$ in the Coxeter diagram of $(W,S)$} is a product of generators $s_{1},...,s_{n}\in S$ with $m_{s_{i}s_{i+1}}=\infty$ for $i=1,...n-1$. We say that the path is \emph{closed} if $m_{s_{1}s_{n}}=\infty$ and that the path \emph{covers the whole graph} if $\left\{ s_{1},...,s_{n}\right\} =S$.
\end{definition}

\begin{remark}
Let $(W,S)$ be a right-angled finite rank Coxeter system. For a closed path $\mathbf{g}:=s_{1}...s_{n}\in W$ in the Coxeter diagram of $(W,S)$ that covers the whole graph we have that $\left|s\mathbf{g}\right|>\left|\mathbf{g}\right|$ for every $s\in S\setminus\left\{ s_{1}\right\}$  and $C_{W}(\mathbf{g})=\left\{ \mathbf{g}^{i}\mid i\in\mathbb{Z}\right\}$. In particular, $\left|\mathbf{g}^{n}\right|=\left|n\right|\left|\mathbf{g}\right|$ for every $n\in\mathbb{Z}$. These facts have been crucial in the proof of \cite[Theorem 3.20]{Mario2}.
\end{remark}

In the single-parameter case the following lemma appears in \cite[Lemma 2.7]{Caspers}. The proof presented there translates verbatim to the multi-parameter case. Therefore we omit it here.

\begin{lemma}[{\cite[Lemma 2.7]{Caspers}}] \label{Martijn}
Let $(W,S)$ be a right-angled Coxeter system. Denote the set of subsets of $S$ whose elements pairwise commute (including the empty set) by $\text{Cliq}$ and write $P_{\Gamma}:=\prod_{s\in\Gamma}P_{s}$ for $\Gamma\in\text{Cliq}$. Further, for $\mathbf{w}\in W$ let $A_{\mathbf{w}}$ be the set of triples $\left(\mathbf{w}^{\prime},\Gamma,\mathbf{w}^{\prime\prime}\right)$ with $\mathbf{w}^{\prime},\mathbf{w}^{\prime\prime}\in W$ and $\Gamma\in\text{Cliq}$ such that $\mathbf{w}=\mathbf{w}^{\prime}\left(\prod_{s\in\Gamma}s\right)\mathbf{w}^{\prime\prime}$, $|\mathbf{w}|=|\mathbf{w}^{\prime}|+|\prod_{s\in\Gamma}s|+|\mathbf{w}^{\prime\prime}|$ and $|\mathbf{w}^{\prime}t|>|\mathbf{w}^{\prime}|$ for all $t\in S$ with $m_{st}=2$ for all $s\in\Gamma$. Then the operator $T_{\mathbf{w}}^{(q)}$ decomposes as
\begin{eqnarray}
\nonumber
T_{\mathbf{w}}^{(q)}=\sum_{\left(\mathbf{w}^{\prime},\Gamma,\mathbf{w}^{\prime\prime}\right)\in A_{\mathbf{w}}}\left(\prod_{s\in\Gamma}p_{s}(q)\right)T_{\mathbf{w}^{\prime}}^{(1)}P_{\Gamma}T_{\mathbf{w}^{\prime\prime}}^{(1)}\text{.}
\end{eqnarray}
\end{lemma}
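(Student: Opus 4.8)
The plan is to prove the identity by induction on the word length $n:=|\mathbf w|$. When $n=0$ we have $T_e^{(q)}=1$, and $A_e=\{(e,\emptyset,e)\}$ — for $\Gamma=\emptyset$ the third condition in the definition of $A_{\mathbf w}$ forces $\mathbf w'=e$, hence $\mathbf w''=e$ — so the right-hand side is $T_e^{(1)}P_\emptyset T_e^{(1)}=1$. For the inductive step pick a generator $s$ with $s\le\mathbf w$, write $\mathbf w=s\mathbf v$ with $s\nleq\mathbf v$ and $|\mathbf v|=n-1$, and combine $T_{\mathbf w}^{(q)}=T_s^{(q)}T_{\mathbf v}^{(q)}$, the decomposition $T_s^{(q)}=T_s^{(1)}+p_s(q)P_s$ (recalled in Remark \ref{remark2}), and the induction hypothesis for $\mathbf v$. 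Since $T^{(1)}$ multiplies like the group algebra, $T_s^{(1)}T_{\mathbf v'}^{(1)}=T_{s\mathbf v'}^{(1)}$, and one obtains
\[
T_{\mathbf w}^{(q)}=\sum_{(\mathbf v',\Gamma,\mathbf v'')\in A_{\mathbf v}}\Big(\prod_{r\in\Gamma}p_r(q)\Big)\Big(T_{s\mathbf v'}^{(1)}P_\Gamma T_{\mathbf v''}^{(1)}+p_s(q)\,P_sT_{\mathbf v'}^{(1)}P_\Gamma T_{\mathbf v''}^{(1)}\Big),
\]
and the task is to rewrite each summand in the normal form $\big(\prod_{r\in\Gamma'}p_r(q)\big)T_{\mathbf w'}^{(1)}P_{\Gamma'}T_{\mathbf w''}^{(1)}$ of a triple in $A_{\mathbf w}$, and then to check that, taken together, this sets up a bijection onto $A_{\mathbf w}$.

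The rewriting moves I would use are all instances of Remark \ref{remark} and Proposition \ref{action}: $T_t^{(1)}P_s=P_sT_t^{(1)}$ and $P_tP_s=P_{\{s,t\}}$ when $m_{st}=2$, together with the vanishing relations $P_sP_t=0$ and $P_sT_t^{(1)}P_s=0$ when $m_{st}=\infty$. For the first branch one has $s\nleq\mathbf v'$ (as $\mathbf v'\le\mathbf v$), so $|s\mathbf v'|=|\mathbf v'|+1$, and one checks that $(s\mathbf v',\Gamma,\mathbf v'')\in A_{\mathbf w}$ unless $s$ centralises both $\mathbf v'$ and every generator of $\Gamma$; in that exceptional case $T_{s\mathbf v'}^{(1)}P_\Gamma=T_{\mathbf v'}^{(1)}P_\Gamma T_s^{(1)}$ lets one pass to $(\mathbf v',\Gamma,s\mathbf v'')\in A_{\mathbf w}$ instead. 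For the second branch one shows — using that the concatenation $\mathbf v'\big(\prod_{r\in\Gamma}r\big)$ is reduced, and that $\Gamma=\emptyset$ forces $\mathbf v'=e$ — that $P_sT_{\mathbf v'}^{(1)}P_\Gamma$ vanishes unless $s$ centralises $\mathbf v'$, in which case it equals $T_{\mathbf v'}^{(1)}P_sP_\Gamma$; this is $0$ unless $s$ centralises $\Gamma$ as well, and then it equals $T_{\mathbf v'}^{(1)}P_{\Gamma\cup\{s\}}$ with $s\notin\Gamma$ (for otherwise $\mathbf w=s\mathbf v$ would not be reduced), so that $p_s(q)\prod_{r\in\Gamma}p_r(q)=\prod_{r\in\Gamma\cup\{s\}}p_r(q)$ and the summand is the normal form of $(\mathbf v',\Gamma\cup\{s\},\mathbf v'')\in A_{\mathbf w}$. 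At each step one verifies the three conditions defining $A_{\mathbf w}$ — correct product, additive length, and that the left factor ends in no generator commuting with the whole clique — which is where the right-angled cancellation rule recorded just before Theorem \ref{help}, and Theorem \ref{help} itself, get used.

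The genuine content, and the step I expect to be the main obstacle, is this last bookkeeping: verifying that the assignments above biject the disjoint union of the two branches onto all of $A_{\mathbf w}$, with no two summands hitting the same triple (with conflicting scalars) and no triple of $A_{\mathbf w}$ left out. Concretely one partitions $A_{\mathbf w}$ according to whether, in $\mathbf w=\mathbf w'\big(\prod_{r\in\Gamma'}r\big)\mathbf w''$, the leading generator $s$ of $\mathbf w$ occurs inside $\mathbf w'$, inside $\mathbf w''$ (the exceptional swap case), or inside $\Gamma'$, and one confirms that the right-end condition on $\mathbf w'$ is never destroyed on passing from $\mathbf v'$ to $s\mathbf v'$ or from $\Gamma$ to $\Gamma\cup\{s\}$ — it only becomes easier, since enlarging the clique shrinks the set of generators that must avoid the right end of $\mathbf w'$. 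This is precisely the computation carried out for a single parameter in \cite[Lemma 2.7]{Caspers}; the deformation parameter enters only through the commuting scalars $p_s(q)$ and never touches any of the identities above, so that proof transfers verbatim, which is why it is omitted here.
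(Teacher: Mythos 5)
Your proposal is correct and lands in essentially the same place as the paper, which offers no argument of its own but simply notes that the single-parameter proof of \cite[Lemma 2.7]{Caspers} carries over verbatim because the deformation enters only through the scalar coefficients $p_s(q)$ --- exactly the observation with which you close. Your inductive sketch (splitting $T_s^{(q)}=T_s^{(1)}+p_s(q)P_s$ and tracking the two branches into triples of $A_{\mathbf{w}}$) is a faithful reconstruction of that cited argument.
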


\begin{corollary} \label{decomposition}
Let $(W,S)$ be a right-angled Coxeter system, $q=(q_{s})_{s\in S}\in\mathbb{R}_{>0}^{(W,S)}$, $l\in\mathbb{N}$ and let $\mathbf{g}:=s_{1}...s_{n}\in W$ be a closed path in the Coxeter diagram of $(W,S)$. Then there exists an operator $x\in\mathfrak{A}(W)$ such that $T_{\mathbf{g}^{l}}^{(q)}$ decomposes as $T_{\mathbf{g}^{l}}^{(q)}=T_{\mathbf{g}^{l}}^{(1)}+P_{s_{1}}x$.
\end{corollary}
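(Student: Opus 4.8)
The plan is to apply Lemma \ref{Martijn} to the element $\mathbf{w}=\mathbf{g}^{l}$ together with a distinguished reduced expression, and to read off from the explicit decomposition that every term other than $T_{\mathbf{g}^{l}}^{(1)}$ has a factor $P_{s_1}$ on the left which can be pulled out (up to reindexing the $P_\Gamma$ and moving group operators past it). Recall from the Remark following the definition of a path that for a closed path $\mathbf{g}=s_1\dots s_n$ covering the whole graph one has $\lvert \mathbf{g}^l\rvert = l\lvert\mathbf{g}\rvert$, so $s_1\dots s_n s_1\dots s_n\cdots s_1\dots s_n$ ($l$ copies) is a reduced expression for $\mathbf{g}^l$; in general, for a closed path that need not cover the whole graph, one still has $\lvert\mathbf{g}^l\rvert = l\lvert\mathbf{g}\rvert$ because consecutive (and, by closedness, cyclically consecutive) generators in the path do not commute, so no cancellation is possible in the concatenated word. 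The first step is therefore to fix this reduced expression and to describe the set $A_{\mathbf{g}^l}$ of triples $(\mathbf{w}',\Gamma,\mathbf{w}'')$ appearing in Lemma \ref{Martijn}.

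Next I would argue that in every triple $(\mathbf{w}',\Gamma,\mathbf{w}'')\in A_{\mathbf{g}^l}$ other than the one giving the ``leading'' term $\mathbf{w}'=\mathbf{g}^l$, $\Gamma=\emptyset$, $\mathbf{w}''=e$ (whose contribution is exactly $T_{\mathbf{g}^l}^{(1)}$), the operator $T_{\mathbf{w}'}^{(1)}P_\Gamma T_{\mathbf{w}''}^{(1)}$ is a left multiple of $P_{s_1}$. The key observation is that because $l\geq 1$ and $\mathbf{g}$ starts with $s_1$, the only prefix of the chosen reduced expression of $\mathbf{g}^l$ that can appear as $\mathbf{w}'$ with $\Gamma\neq\emptyset$ and with $\lvert\mathbf{w}' t\rvert>\lvert\mathbf{w}'\rvert$ for all $t$ commuting with all of $\Gamma$ forces $\Gamma\subseteq\{s_1\}$ and $\mathbf{w}'=e$; indeed, since $s_n$ does not commute with $s_1$ (closedness) and $s_2$ does not commute with $s_1$, the generator $s_1$ can only be ``split off'' as a clique factor at the very front, with $\Gamma=\{s_1\}$ and $\mathbf{w}'=e$. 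Hence every non-leading triple either has $s_1\in\Gamma$ and $\mathbf{w}'=e$, in which case $T_{\mathbf{w}'}^{(1)}P_\Gamma T_{\mathbf{w}''}^{(1)}=P_\Gamma T_{\mathbf{w}''}^{(1)}=P_{s_1}\bigl(\prod_{s\in\Gamma\setminus\{s_1\}}P_s\bigr)T_{\mathbf{w}''}^{(1)}\in P_{s_1}\mathfrak{A}(W)$, or else $\Gamma=\emptyset$ but $\mathbf{w}'\neq\mathbf{g}^l$ — and I would rule this last possibility out by noting that with $\Gamma=\emptyset$ the decomposition forces $\mathbf{w}'\mathbf{w}''=\mathbf{g}^l$ with lengths adding, and the condition in $A_{\mathbf{w}}$ (vacuous for $\Gamma=\emptyset$) together with the structure of the sum means $\Gamma=\emptyset$ contributes only $T_{\mathbf{g}^l}^{(1)}$ with coefficient $\prod_{s\in\emptyset}p_s(q)=1$. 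Collecting all non-leading terms and defining $x:=\sum (\prod_{s\in\Gamma}p_s(q))(\prod_{s\in\Gamma\setminus\{s_1\}}P_s)T_{\mathbf{w}''}^{(1)}\in\mathfrak{A}(W)$, where the sum runs over the non-leading triples, yields $T_{\mathbf{g}^l}^{(q)}=T_{\mathbf{g}^l}^{(1)}+P_{s_1}x$.

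The main obstacle I anticipate is the careful bookkeeping in identifying $A_{\mathbf{g}^l}$ precisely — in particular confirming that the ``start condition'' $\lvert\mathbf{w}' t\rvert > \lvert\mathbf{w}'\rvert$ for all $t$ commuting with all elements of $\Gamma$ genuinely forces $\Gamma=\{s_1\}$, $\mathbf{w}'=e$ for every clique factor containing $s_1$, and that no clique factor $\Gamma$ appearing in a non-leading term can be disjoint from $\{s_1\}$ while still contributing. For the latter, the cleanest route is to observe that any reduced-word occurrence of a clique $\prod_{s\in\Gamma}s$ inside the word $(s_1\cdots s_n)^l$ that starts strictly after position $1$ can be slid leftward — using that the generators immediately to its left in the word do not all commute with $\Gamma$ unless the slide reaches the front — contradicting the maximality condition defining $A_{\mathbf{w}}$ unless $s_1\in\Gamma$ and the factor has been pushed to the very front. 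Alternatively, and perhaps more cleanly for the write-up, one can avoid re-deriving $A_{\mathbf{g}^l}$ from scratch by instead writing $T_{\mathbf{g}^l}^{(q)}=T_{s_1}^{(q)}T_{s_2\cdots s_n}^{(q)}T_{\mathbf{g}^{l-1}}^{(q)}$, using $T_{s_1}^{(q)}=T_{s_1}^{(1)}+p_{s_1}(q)P_{s_1}$ and $s_1\nleq s_2\cdots s_n\mathbf{g}^{l-1}$ (since $s_1$ does not commute with $s_2$, so $s_1\nleq s_2\cdots s_n$, and an easy induction using closedness of $\mathbf{g}$ handles the tail), so that $T_{s_1}^{(1)}T_{s_2\cdots s_n}^{(q)}T_{\mathbf{g}^{l-1}}^{(q)}$ and $P_{s_1}\bigl(\cdots\bigr)$ separate; expanding $T_{s_2\cdots s_n}^{(q)}$ and $T_{\mathbf{g}^{l-1}}^{(q)}$ inductively into their ``$T^{(1)}$ plus lower-order'' pieces and pushing the resulting $P$'s to the left (using Remark \ref{remark} and Remark \ref{remark2} to conjugate projections past $T^{(1)}$-operators, staying inside $\mathfrak{A}(W)$) then isolates the single term $T_{\mathbf{g}^l}^{(1)}$ with everything else carrying a left factor $P_{s_1}$. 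I would present whichever of these two arguments turns out to need the least bookkeeping, most likely the inductive one.
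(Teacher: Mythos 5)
Your primary argument mischaracterizes the set $A_{\mathbf{g}^l}$ from Lemma \ref{Martijn}, and this is a genuine error, not just a bookkeeping issue. Writing $t_1\dots t_m$ ($m=nl$) for the reduced expression $(s_1\dots s_n)^l$, the triple $(t_1\dots t_{i-1},\{t_i\},t_{i+1}\dots t_m)$ lies in $A_{\mathbf{g}^l}$ for \emph{every} $i$, not only for $i=1$: the normalization condition $|\mathbf{w}'t|>|\mathbf{w}'|$ only forbids $\mathbf{w}'$ from ending in a letter commuting with all of $\Gamma$, and the rigid word $t_1\dots t_{i-1}$ ends only in $t_{i-1}$, which does not commute with $t_i$ since they are consecutive in the path. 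So your ``sliding'' argument does not apply — the clique $\{t_i\}$ cannot be slid past $t_{i-1}$ at all, and the triple satisfies the defining condition exactly where it sits. Consequently there are many non-leading terms $p_{t_i}(q)\,T_{t_1\dots t_{i-1}}^{(1)}P_{t_i}T_{t_{i+1}\dots t_m}^{(1)}$ with $\mathbf{w}'\neq e$ and $s_1\notin\Gamma$, which do not visibly carry a left factor $P_{s_1}$; your proposed $x$ captures only the $i=1$ term and the claimed identity fails as stated. The whole content of the corollary is precisely in handling these middle terms: one must conjugate $P_{t_i}$ leftward through $T_{t_1\dots t_{i-1}}^{(1)}$ using Proposition \ref{action} (consecutive letters do not commute, so $t_{i-1}.P_{t_i}=P_{t_{i-1}t_i}$, and iterating gives $T_{t_1\dots t_{i-1}}^{(1)}P_{t_i}=P_{t_1\dots t_i}T_{t_1\dots t_{i-1}}^{(1)}$), after which $P_{t_1\dots t_i}=P_{s_1}P_{t_1\dots t_i}$ because $s_1=t_1\leq t_1\dots t_i$.

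Your fallback sketch in the last paragraph — expand $T_{s_1}^{(q)}=T_{s_1}^{(1)}+p_{s_1}(q)P_{s_1}$, proceed letter by letter, and push the resulting projections to the left via Proposition \ref{action}/Remark \ref{remark2} — is essentially the paper's proof and does work; but as written you relegate it to an alternative and rest the main argument on the false claim that every non-leading triple has $\mathbf{w}'=e$ and $s_1\in\Gamma$. To repair the write-up, drop the claim about $A_{\mathbf{g}^l}$ collapsing to the front, note instead that all cliques occurring are singletons $\{t_i\}$ (the word is rigid, no two adjacent letters commute), and carry out the leftward conjugation for each $i$.
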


\begin{proof}
Write $t_{1}...t_{m}$ for the reduced expression $\mathbf{g}^{l}=(s_{1}...s_{n})(s_{1}...s_{n})...(s_{1}...s_{n})$ of $\mathbf{g}^{l}$ where $m=nl$. By Lemma \ref{Martijn} the operator $T_{\mathbf{g}^{l}}^{(q)}$ decomposes as 
\begin{eqnarray}
\nonumber
T_{\mathbf{g}^{l}}^{(q)}=T_{\mathbf{g}^{l}}^{(1)}+\sum_{i=1}^{m}p_{t_{i}}(q)T_{t_{1}...t_{i-1}}^{(1)}P_{t_{i}}T_{t_{i+1}...t_{m}}^{(1)}\text{,}
\end{eqnarray}
where the first summand corresponds to the triple $(e,\emptyset,\mathbf{g}^{l})\in A_{\mathbf{g}^{l}}$
and the other summands correspond to triples $(t_{1}...t_{i-1},t_{i},t_{i+1}...t_{m})\in A_{\mathbf{g}^{l}}$
with $i=1,...,m$ (since $\mathbf{g}$ is a closed path in the Coxeter
diagram of $(W,S)$ all triples in $A_{\mathbf{g}^{l}}$ are of these
forms). Using the description in Proposition \ref{action} we get that
\begin{eqnarray}
\nonumber
T_{\mathbf{g}^{l}}^{(q)}=T_{\mathbf{g}^{l}}^{(1)}+\sum_{i=1}^{m} p_{t_i}(q) T_{t_{1}...t_{i-2}}^{(1)}P_{t_{i-1}t_{i}}T_{t_{i-1}t_{i+1}...t_{m}}^{(1)}=...=T_{\mathbf{g}^{l}}^{(1)}+\sum_{i=1}^{m} p_{t_i}(q) P_{t_{1}...t_{i}}T_{t_{1}...\widehat{t_{i}}...t_{m}}^{(1)}\text{,}
\end{eqnarray}
so the claim follows by setting $x:=\sum_{i=1}^{m} p_{t_i}(q) P_{t_{1}...t_{i}}T_{t_{1}...\widehat{t_{i}}...t_{n}}^{(1)}$.
\end{proof}

\begin{lemma} \label{step 0}
Let $(W,S)$ be an irreducible right-angled, finite rank Coxeter system, let $\mathbf{g}:=s_{1}...s_{n}\in W$ be a path in the Coxeter diagram of $(W,S)$ that covers the whole graph and let $q=(q_{s})_{s\in S}\in\mathbb{R}_{>0}^{(W,S)}$. Then the series $\sum_{\mathbf{w}\in W}q_{\mathbf{w}}$ converges if and only if the series $\sum_{\mathbf{w}\in W\text{: }\mathbf{g}\leq\mathbf{w}^{-1}}q_{\mathbf{w}}$ converges.
\end{lemma}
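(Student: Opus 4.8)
The statement is an equivalence of convergence of two sub-series of the growth series. One direction is trivial: since all terms $q_{\mathbf{w}}$ are positive, $\sum_{\mathbf{w}\colon \mathbf{g}\leq\mathbf{w}^{-1}}q_{\mathbf{w}}$ is dominated termwise by $\sum_{\mathbf{w}\in W}q_{\mathbf{w}}$, so convergence of the full series forces convergence of the sub-series. The content is in the other direction: assuming $\sum_{\mathbf{g}\leq\mathbf{w}^{-1}}q_{\mathbf{w}}<\infty$, deduce $\sum_{\mathbf{w}\in W}q_{\mathbf{w}}<\infty$. The plan is to partition $W$ according to the ``prefix behaviour'' of $\mathbf{w}$ relative to the cyclic path $\mathbf{g}$ and its rotations, and to bound the pieces where $\mathbf{g}$ (or a rotation of it) is \emph{not} a left-divisor of $\mathbf{w}^{-1}$ by a geometric-type series that converges because the local branching is controlled.

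\textbf{Key steps.} First I would reformulate the condition $\mathbf{g}\leq\mathbf{w}^{-1}$ (i.e. $\mathbf{w}^{-1}$ starts with $\mathbf{g}$, equivalently $\mathbf{w}$ ends with $\mathbf{g}^{-1}=s_{n}\dots s_{1}$) in terms of reduced words for $\mathbf{w}$. Using the remark that $\mathbf{g}$ covers the whole graph, so $|s\mathbf{g}|>|\mathbf{g}|$ for all $s\neq s_{1}$ and $C_{W}(\mathbf{g})=\{\mathbf{g}^{i}\}$, I would show that for ``most'' $\mathbf{w}$ one can factor out a copy of $\mathbf{g}^{-1}$ on the right: the idea is that when building a reduced word for $\mathbf{w}$ letter by letter from the right, at each stage there are at most $\#S$ choices, but the ``bad'' stages — where the word is forced away from eventually ending in $\mathbf{g}^{-1}$ — can be organized so that each such stage is charged a multiplicative factor bounded by the partition function of a proper special subgroup, or by a single generator's weight $q_{s}\leq 1$. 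Concretely, I would set $c:=\sum_{\mathbf{w}\colon \mathbf{g}\leq\mathbf{w}^{-1}}q_{\mathbf{w}}$ and try to prove an inequality of the shape $\sum_{\mathbf{w}\in W}q_{\mathbf{w}} \leq \big(\sum_{k=0}^{m-1}\ \sum_{\substack{\mathbf{v}\colon |s_{j}\mathbf{v}|<|\mathbf{v}| \text{ forbidden pattern}}} q_{\mathbf{v}}\big)\cdot\frac{1}{1-\rho}$ for a suitable $\rho<1$, where the inner sums run over ``remainders'' that are too short or too constrained to contain a full rotated copy of $\mathbf{g}$. The main technical device is: every $\mathbf{w}$ with $|\mathbf{w}|\geq (\text{something})$ either ends with $\mathbf{g}^{-1}$, or ends with $s_{j}s_{j-1}\dots s_{1}$ for some $j<n$ followed by a letter that is \emph{not} $s_{j+1}$ (a ``deflection''), or decomposes through a proper special subgroup; in the first case one strips $\mathbf{g}^{-1}$ and recurses, gaining the factor $q_{\mathbf{g}}$; in the other cases one is in a thin set that contributes a convergent sum on its own.

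\textbf{Main obstacle.} The crux — and the part I expect to be delicate — is making the combinatorial decomposition of a general $\mathbf{w}\in W$ into ``times you complete a rotated $\mathbf{g}$'' versus ``bounded-contribution remainder'' both exhaustive and multiplicative with respect to $q_{\bullet}$, so that the geometric bound actually closes. Right-angledness and the fact that $\mathbf{g}$ covers the whole graph are exactly what make the centralizer of $\mathbf{g}$ trivial (cyclic) and prevent the path from ``hiding'' inside a commuting complement, which is what one needs to guarantee that sufficiently long words are genuinely forced to either contain a rotated $\mathbf{g}$ as a factor or to stay inside a proper special subgroup $W_{S\setminus\{s\}}$; I would likely argue this by an induction on $|\mathbf{w}|$ together with the cancellation property $s_{1}\dots s_{n}=s_{1}\dots\widehat{s_i}\dots\widehat{s_j}\dots s_{n}\Rightarrow s_i=s_j$ and Theorem~\ref{help} to control how left-divisors propagate. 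Once that structural dichotomy is in place, summing the geometric series and invoking that each proper special subgroup's growth series, evaluated at the restricted parameter, converges whenever the relevant sub-series does (again by the hypothesis applied to shorter paths, or by a direct comparison), finishes the argument.
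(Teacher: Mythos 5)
The easy direction of your argument is fine, but the substantive direction has a genuine gap. The structural trichotomy you propose (``$\mathbf{w}$ ends with $\mathbf{g}^{-1}$, or ends with a deflected prefix $s_{j}\dots s_{1}$, or decomposes through a proper special subgroup'') is neither made precise nor shown to produce a summable remainder. Already for $j=0$ the ``deflection'' class consists of all $\mathbf{w}$ whose inverse does not start with $s_{1}$, which is a huge subset of $W$; proving that its $q$-weighted sum converges is essentially the lemma itself, so you cannot treat it as a ``thin set that contributes a convergent sum on its own''. Likewise, the fallback that each proper special subgroup's growth series converges ``by the hypothesis applied to shorter paths, or by a direct comparison'' is unsubstantiated: the hypothesis only controls elements whose inverse starts with the full path $\mathbf{g}$, and extracting from it the convergence of $W_{T}(q_{T})$ for proper $T\subseteq S$ is not easier than the original claim. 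You flag exactly this point as the ``main obstacle'' and do not resolve it, so the recursion/geometric-series scheme does not close.

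The paper's proof avoids all of this with a two-step comparison of tails; no recursion, rotations, induction on $|\mathbf{w}|$, or special subgroups are needed, and neither irreducibility nor the fact that $\mathbf{g}$ covers the whole graph enters --- only the path property that consecutive letters of $\mathbf{g}$ do not commute. Fix $i<j$ and split the elements with $i<|\mathbf{w}|\leq j$ according to whether $s_{n}\leq\mathbf{w}^{-1}$ or not. The map $\mathbf{w}\mapsto\mathbf{w}s_{n}$ is a length-shifting bijection between the two classes, so this tail is at most $(1+q_{s_{n}})$ times a tail of the sum over $\{\mathbf{w}:\, s_{n}\nleq\mathbf{w}^{-1}\}$. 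The key observation is then: if $s_{n}\nleq\mathbf{w}^{-1}$, the product $\mathbf{g}\mathbf{w}^{-1}=s_{1}\cdots s_{n}\cdot(\text{reduced word for }\mathbf{w}^{-1})$ is reduced, because a right-angled cancellation would force some $s_{i}$ to commute with $s_{i+1}$ (impossible for $i<n$ since $m_{s_{i}s_{i+1}}=\infty$) or would force $s_{n}\leq\mathbf{w}^{-1}$. Hence $\mathbf{w}\mapsto\mathbf{w}\mathbf{g}^{-1}$ injects $\{\mathbf{w}:\, s_{n}\nleq\mathbf{w}^{-1}\}$ length-additively into $\{\mathbf{w}:\,\mathbf{g}\leq\mathbf{w}^{-1}\}$ with $q_{\mathbf{w}\mathbf{g}^{-1}}=q_{\mathbf{g}}q_{\mathbf{w}}$, so the tail of the full series is bounded by $\tfrac{1+q_{s_{n}}}{q_{\mathbf{g}}}$ times a tail of the convergent restricted series. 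The partial sums of $\sum_{\mathbf{w}\in W}q_{\mathbf{w}}$ are therefore Cauchy. Your idea of ``factoring out a copy of $\mathbf{g}^{-1}$ on the right'' is the correct germ, but it should be applied once, as a global injection, rather than recursively with a geometric bound.
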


\begin{proof}
Since all summands of the series are positive it is clear that the convergence of $\sum_{\mathbf{w}\in W}q_{\mathbf{w}}$ implies the convergence of $\sum_{\mathbf{w}\in W\text{: }\mathbf{g}\leq\mathbf{w}^{-1}}q_{\mathbf{w}}$. So assume that $\sum_{\mathbf{w}\in W\text{: }\mathbf{g}\leq\mathbf{w}^{-1}}q_{\mathbf{w}}$ converges. For every $i,j\in\mathbb{N}$ with $i<j$ we have that
\begin{eqnarray} \nonumber
\left|\sum_{\mathbf{w}\in W\text{: }\left|\mathbf{w}\right|\leq i}q_{\mathbf{w}}-\sum_{\mathbf{w}\in W\text{: }\left|\mathbf{w}\right|\leq j}q_{\mathbf{w}}\right| &=& \sum_{\mathbf{w}\in W\text{: }i<\left|\mathbf{w}\right|\leq j}q_{\mathbf{w}}\\
\nonumber
&=& \sum_{\mathbf{w}\in W\text{: }i<\left|\mathbf{w}\right|\le j\text{, }s_{n}\leq\mathbf{w}^{-1}}q_{\mathbf{w}}+\sum_{\mathbf{w}\in W\text{: }i<\left|\mathbf{w}\right|\leq j\text{, }s_{n}\nleq\mathbf{w}^{-1}}q_{\mathbf{w}}\\
\nonumber
&=& \sum_{\mathbf{w}\in W\text{: }i-1<\left|\mathbf{w}\right|\leq j-1\text{, }s_{n}\nleq\mathbf{w}^{-1}}q_{s_{n}}q_{\mathbf{w}}+\sum_{\mathbf{w}\in W\text{: }i<\left|\mathbf{w}\right|\leq j\text{, }s_{n}\nleq\mathbf{w}^{-1}}q_{\mathbf{w}}\\
\nonumber
&\leq& (1+q_{s_{n}})\sum_{\mathbf{w}\in W\text{: }i-1<\left|\mathbf{w}\right|\leq j\text{, }s_{n}\nleq\mathbf{w}^{-1}}q_{\mathbf{w}}\\
\nonumber
&=& \frac{1+q_{s_{n}}}{q_{\mathbf{g}^{-1}}}\sum_{\mathbf{w}\in W\text{: }i-1<\left|\mathbf{w}\right|\leq j\text{, }s_{n}\nleq\mathbf{w}^{-1}}q_{\mathbf{g}^{-1}}q_{\mathbf{w}}\\
\nonumber
&=& \frac{1+q_{s_{n}}}{q_{\mathbf{g}}}\sum_{\mathbf{w}\in W\text{: }i-1<\left|\mathbf{g}^{-1}\mathbf{w}^{-1}\right|\leq j\text{, }\mathbf{g}\leq\mathbf{w}^{-1}}q_{\mathbf{w}}\\
\nonumber
&=& \frac{1+q_{s_{n}}}{q_{\mathbf{g}}}\sum_{\mathbf{w}\in W\text{: }i-1+n<\left|\mathbf{w}\right|\leq j+n\text{, }\mathbf{g}\leq\mathbf{w}^{-1}}q_{\mathbf{w}}\text{,}
\end{eqnarray}
where the fifth equality follows from the fact that
\[
\{\mathbf{w}\mathbf{g}^{-1}\mid\mathbf{w}\in W\text{ with }s_{n}\nleq\mathbf{w}^{-1}\}=\{\mathbf{w}\in W\mid\mathbf{g}\leq\mathbf{w}^{-1}\}
\]
since $\mathbf{g}=s_{1}...s_{n}$ is a path in the Coxeter diagram
of $(W,S)$. That implies that the sequence of partial sums of $\sum_{\mathbf{w}\in W}q_{\mathbf{w}}$ is a Cauchy sequence and hence that the series converges.
\end{proof}


\subsection{States on $\mathfrak{A}(W)$ and $\pi(\mathfrak{A}(W))$}

Our solution of the simplicity question for right-angled Hecke C$^{\ast}$-algebras is inspired by Haagerup's approach to the unique trace property of group C$^{\ast}$-algebras in \cite{Haagerup}. The translation of the techniques into the deformed setting requires the study of states on the C$^{\ast}$-algebras $\mathfrak{A}(W)$ and $\pi(\mathfrak{A}(W))$.

\begin{lemma} \label{operator}
Let $(W,S)$ be a right-angled, finite rank Coxeter system. For every $\mathbf{u}\in W$ and $0<q<1$ the operator $\mathbf{Q}_{q}^{\mathbf{u}}$ on $\ell^{2}(W)$ defined by
\begin{eqnarray}
\nonumber
\mathbf{Q}_{q}^{\mathbf{u}}:=\sum_{l=\left|\mathbf{u}\right|}^{\infty} \; \sum_{\mathbf{w}\in W\text{: }\left|\mathbf{w}\right|=l,\mathbf{u}\leq\mathbf{w}^{-1}}q^{l}P_{\mathbf{w}}
\end{eqnarray}
exists (where the limit is taken with respect to the operator norm) and is contained in the C$^{\ast}$-algebra $\mathcal{D}(W,S)\subseteq\mathcal{B}(\ell^{2}(W))$.
\end{lemma}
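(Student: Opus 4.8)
The plan is to show that the partial sums
\[
\mathbf{Q}_{q}^{\mathbf{u},N}:=\sum_{l=\left|\mathbf{u}\right|}^{N}\;\sum_{\mathbf{w}\in W\text{: }\left|\mathbf{w}\right|=l,\,\mathbf{u}\leq\mathbf{w}^{-1}}q^{l}P_{\mathbf{w}}
\]
form a Cauchy sequence in operator norm. Since $\mathcal{D}(W,S)$ is norm-closed and each $\mathbf{Q}_{q}^{\mathbf{u},N}$ is a finite $\mathbb{R}_{>0}$-linear combination of the generators $P_{\mathbf{w}}$, the norm-limit then automatically exists inside $\mathcal{D}(W,S)$, so both assertions follow at once. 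The starting point is that $P_{\mathbf{w}}$ is the orthogonal projection onto the closed span of a \emph{subset} $\{\delta_{\mathbf{v}}:\mathbf{w}\leq\mathbf{v}\}$ of the orthonormal basis, hence is diagonal with $P_{\mathbf{w}}\delta_{\mathbf{v}}=\delta_{\mathbf{v}}$ if $\mathbf{w}\leq\mathbf{v}$ and $P_{\mathbf{w}}\delta_{\mathbf{v}}=0$ otherwise. Consequently, for $M<N$ the operator $\mathbf{Q}_{q}^{\mathbf{u},N}-\mathbf{Q}_{q}^{\mathbf{u},M}$ is diagonal with non-negative entries, and
\[
\left\Vert \mathbf{Q}_{q}^{\mathbf{u},N}-\mathbf{Q}_{q}^{\mathbf{u},M}\right\Vert =\sup_{\mathbf{v}\in W}\;\sum_{l=M+1}^{N}q^{l}\cdot\#\left\{ \mathbf{w}\in W:\mathbf{w}\leq\mathbf{v},\ \left|\mathbf{w}\right|=l,\ \mathbf{u}\leq\mathbf{w}^{-1}\right\} .
\]
So everything reduces to an estimate, \emph{uniform in} $\mathbf{v}$, on the number of elements of a prescribed length lying below $\mathbf{v}$ in the weak right Bruhat order.

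The key step is the claim that for every $\mathbf{v}\in W$ and every $l\in\mathbb{N}$,
\[
\#\left\{ \mathbf{w}\in W:\mathbf{w}\leq\mathbf{v},\ \left|\mathbf{w}\right|=l\right\} \leq\binom{l+\#S-1}{\#S-1}.
\]
I would prove this from the combinatorics of reduced expressions in the right-angled case. Fix a reduced expression $\mathbf{v}=a_{1}\cdots a_{m}$ and consider the partial order $\prec$ on $\{1,\dots,m\}$ generated by declaring $i\prec j$ whenever $i<j$ and either $a_{i}=a_{j}$ or $m_{a_{i}a_{j}}=\infty$ (the heap of $\mathbf{v}$). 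Using that in the right-angled case any two reduced expressions differ by transpositions of adjacent commuting generators, together with the cancellation property recalled in Section \ref{1}, one checks that the map sending $\mathbf{w}\leq\mathbf{v}$ to the set of positions occupied by a reduced expression of $\mathbf{w}$ inside $a_{1}\cdots a_{m}$ is an injection from $\{\mathbf{w}:\mathbf{w}\leq\mathbf{v}\}$ into the set of order ideals (down-sets) of $\prec$, carrying $\mathbf{w}$ to an ideal of cardinality $|\mathbf{w}|$. Every antichain of $\prec$ consists of positions carrying pairwise distinct and pairwise commuting generators, so $\prec$ has width at most $\#S$; by Dilworth's theorem its underlying set is a union of at most $\#S$ chains, and an order ideal is determined by the tuple recording how far into each chain it reaches — a tuple of at most $\#S$ non-negative integers whose sum is the cardinality of the ideal. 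Since the number of such tuples with sum $l$ is at most $\binom{l+\#S-1}{\#S-1}$, the claim follows.

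Combining the two steps, for $M<N$,
\[
\left\Vert \mathbf{Q}_{q}^{\mathbf{u},N}-\mathbf{Q}_{q}^{\mathbf{u},M}\right\Vert \leq\sum_{l=M+1}^{\infty}\binom{l+\#S-1}{\#S-1}q^{l},
\]
which is the tail of the convergent series $\sum_{l\geq0}\binom{l+\#S-1}{\#S-1}q^{l}=(1-q)^{-\#S}$; here both hypotheses $0<q<1$ and $\#S<\infty$ are used, and the tail tends to $0$ as $M\to\infty$. Hence $(\mathbf{Q}_{q}^{\mathbf{u},N})_{N}$ is norm-Cauchy, so $\mathbf{Q}_{q}^{\mathbf{u}}$ exists as a norm-limit and lies in $\mathcal{D}(W,S)$. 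The main obstacle is the uniform counting estimate of the second paragraph: the point is precisely that, although $W$ may grow exponentially, the number of elements of a fixed length \emph{below a fixed $\mathbf{v}$} grows only polynomially in the length, with a bound independent of $\mathbf{v}$; making the heap description of the interval $\{\mathbf{w}:\mathbf{w}\leq\mathbf{v}\}$ rigorous (or replacing it by a careful induction on $|\mathbf{v}|$ based on Theorem \ref{help}) is the technical heart. The constraint $\mathbf{u}\leq\mathbf{w}^{-1}$ only removes terms from the sums and therefore plays no role in the convergence; it is present for the later applications.
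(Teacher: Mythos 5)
Your proof is correct and follows essentially the same strategy as the paper: show the partial sums are norm-Cauchy by exploiting that the differences are diagonal operators and bounding the diagonal entries via a polynomial-in-$l$, uniform-in-$\mathbf{v}$ estimate on $\#\{\mathbf{w}\leq\mathbf{v}:|\mathbf{w}|=l\}$. The only divergence is that the paper simply cites \cite[Lemma 4.4]{Caspers} for the bound $\kappa_{\mathbf{v}}(l)\leq Cl^{\#S-2}$, whereas you supply a self-contained (slightly weaker but entirely sufficient) bound $\binom{l+\#S-1}{\#S-1}$ via the heap of $\mathbf{v}$ and Dilworth's theorem; that argument is sound and is a reasonable substitute for the citation.
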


\begin{proof}
It suffices to show that the sequence $(\mathbf{Q}_{q,i}^{\mathbf{u}})_{i\geq\left|\mathbf{u}\right|}$ with
\begin{eqnarray}
\nonumber
\mathbf{Q}_{q,i}^{\mathbf{u}}:=\sum_{l=\left|\mathbf{u}\right|}^{i} \; \sum_{\mathbf{w}\in W\text{: }\left|\mathbf{w}\right|=l,\mathbf{u}\leq\mathbf{w}^{-1}}q^{l}P_{\mathbf{w}}\in\mathcal{D}(W,S)
\end{eqnarray}
is a Cauchy sequence. For $\mathbf{v}\in W$ and $l\in\mathbb{N}$ set $\kappa_{\mathbf{v}}(l):=\#\left\{ \mathbf{w}\in W\mid\mathbf{w}\leq\mathbf{v}\text{ and }\left|\mathbf{w}\right|=l\right\}$. It has been shown in \cite[Lemma 4.4]{Caspers} that $\kappa_{\mathbf{v}}(l)\leq Cl^{\#S-2}$ for some constant $C>0$. Using this in the third line of the following inequalities, we get that for $i<j$ with $i\geq\left|\mathbf{u}\right|$ and $\xi\in\ell^{2}(W)$,
\begin{eqnarray}
\nonumber
\left\Vert (\mathbf{Q}_{q,j}^{\mathbf{u}}-\mathbf{Q}_{q,i}^{\mathbf{u}})\xi\right\Vert _{2}&=& \left\Vert \sum_{\mathbf{v}\in W}\left(\sum_{l=i+1}^{j} \; \sum_{\mathbf{w}\in W\text{: }\left|\mathbf{w}\right|=l,\mathbf{w}\leq\mathbf{v},\mathbf{u}\leq\mathbf{w}^{-1}}q^{l}\right)\xi(\mathbf{v})\delta_{\mathbf{v}}\right\Vert _{2}\\
\nonumber
&\leq& \sqrt{\sum_{\mathbf{v}\in W}\left(\sum_{l=i+1}^{j}q^{l}\kappa_{\mathbf{v}}(l)\right)^{2}\left|\xi(\mathbf{v})\right|^{2}}\\
\nonumber
&\leq& C\left(\sum_{l=i+1}^{j}q^{l}l^{\#S-2}\right)\left\Vert \xi\right\Vert _{2}\text{.}
\end{eqnarray}
For $0<q<1$ the series $\sum_{l=1}^{\infty}q^{l}l^{\#S-2}$ converges. This implies the claim.
\end{proof}

The following proposition will play a crucial role in Subsection \ref{3} and \ref{simplicity}. Recall that $\overline{\mathcal{R}^{\prime}}$ is the closure of $\mathcal{R}^{\prime}:=\left\{ (q_{s}^{\epsilon_{s}})_{s\in S}\mid q\in\mathcal{R}\cap\mathbb{R}_{>0}^{(W,S)}\text{, }\epsilon\in\left\{ -1,1\right\} ^{(W,S)}\right\}$  in $\mathbb{R}_{>0}^{(W,S)}$, where $\mathcal{R}$ denotes the region of convergence of the growth series of $(W,S)$.

\begin{proposition} \label{step 1}
Let $(W,S)$ be a right-angled, irreducible, finite rank Coxeter system, $q=(q_{s})_{s\in S}\in\mathbb{R}_{>0}^{(W,S)}\setminus\overline{\mathcal{R}^{\prime}}$ and let $\mathbf{g}:=s_{1}...s_{n}\in W$ be a path in the Coxeter diagram of $(W,S)$ that covers the whole graph. Then, for every state $\phi$ on $\mathfrak{A}(W)$ there exists a sequence $(\mathbf{w}_{i})_{i\in\mathbb{N}}\subseteq W$ of group elements with increasing word length such that $\mathbf{g}\leq\mathbf{w}_{i}^{-1}$ for all $i\in\mathbb{N}$ and $q_{\mathbf{w}_{i}}^{-1}\phi(P_{\mathbf{w}_{i}})\rightarrow0$.

The same statement holds, if one replaces $\mathfrak{A}(W)$ by $\pi(\mathfrak{A}(W))$ and $P_{\mathbf{w}_{i}}$ by $\tilde{P}_{\mathbf{w}_{i}}$.
\end{proposition}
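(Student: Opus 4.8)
The plan is to argue by contradiction: I will use the diagonal operator $\mathbf{Q}_{q}^{\mathbf{u}}$ of Lemma \ref{operator} to build a bounded element of $\mathcal{D}(W,S)\subseteq\mathfrak{A}(W)$ on which $\phi$ must take a finite value, whereas the negation of the conclusion (a uniform lower bound $\phi(P_{\mathbf{w}})\geq\delta q_{\mathbf{w}}$) would make that value infinite. The hypothesis $q\notin\overline{\mathcal{R}^{\prime}}$ enters at exactly one point: it lets me replace $q$ by a \emph{strictly smaller} parameter that still lies outside the region of convergence $\mathcal{R}$.

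First I would set up the shrunk parameter. Since $\mathcal{R}^{\prime}\supseteq\mathcal{R}\cap\mathbb{R}_{>0}^{(W,S)}$ (take $\epsilon\equiv 1$), we have $\overline{\mathcal{R}^{\prime}}\supseteq\overline{\mathcal{R}\cap\mathbb{R}_{>0}^{(W,S)}}$ inside $\mathbb{R}_{>0}^{(W,S)}$, so $q\notin\overline{\mathcal{R}^{\prime}}$ forces $q$ to have strictly positive distance to the closed set $\overline{\mathcal{R}\cap\mathbb{R}_{>0}^{(W,S)}}$. Hence I may pick $\eta\in(0,1)$ so small that $r:=(1-\eta)q$ still satisfies $r\notin\mathcal{R}$, i.e. $\sum_{\mathbf{w}\in W}r_{\mathbf{w}}=\infty$ (here $r_{\mathbf{w}}=(1-\eta)^{|\mathbf{w}|}q_{\mathbf{w}}$). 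Because $\mathbf{g}$ covers the whole graph, Lemma \ref{step 0} applied to the parameter $r$ then gives $\sum_{\mathbf{w}\in W:\,\mathbf{g}\leq\mathbf{w}^{-1}}r_{\mathbf{w}}=\infty$. Next, applying Lemma \ref{operator} with $\mathbf{u}=\mathbf{g}$ and parameter $1-\eta\in(0,1)$ produces the bounded operator
\[ R:=\mathbf{Q}_{1-\eta}^{\mathbf{g}}=\sum_{\mathbf{w}\in W:\,\mathbf{g}\leq\mathbf{w}^{-1}}(1-\eta)^{|\mathbf{w}|}P_{\mathbf{w}}\in\mathcal{D}(W,S)\subseteq\mathfrak{A}(W). \]
Since $R$ is the operator-norm limit of its finite partial sums, which are positive combinations of the projections $P_{\mathbf{w}}$, and $\phi$ is norm-continuous, $\phi(R)=\sum_{\mathbf{w}:\,\mathbf{g}\leq\mathbf{w}^{-1}}(1-\eta)^{|\mathbf{w}|}\phi(P_{\mathbf{w}})$ is a convergent series of non-negative terms with $\phi(R)\leq\|R\|<\infty$.

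Now I would close the argument. Suppose the conclusion fails. Since $(W,S)$ has finite rank, only finitely many elements of $W$ lie below any given length bound, so the failure means there are $\delta>0$ and $N\in\mathbb{N}$ with $\phi(P_{\mathbf{w}})\geq\delta q_{\mathbf{w}}$ for every $\mathbf{w}\in W$ with $\mathbf{g}\leq\mathbf{w}^{-1}$ and $|\mathbf{w}|\geq N$. Inserting this into the series for $\phi(R)$ and using $(1-\eta)^{|\mathbf{w}|}q_{\mathbf{w}}=r_{\mathbf{w}}$ yields $\phi(R)\geq\delta\sum_{\mathbf{w}:\,\mathbf{g}\leq\mathbf{w}^{-1},\,|\mathbf{w}|\geq N}r_{\mathbf{w}}=\infty$ by the previous step, contradicting $\phi(R)<\infty$. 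Hence for every $\varepsilon>0$ and every $N$ there is $\mathbf{w}\in W$ with $\mathbf{g}\leq\mathbf{w}^{-1}$, $|\mathbf{w}|\geq N$ and $q_{\mathbf{w}}^{-1}\phi(P_{\mathbf{w}})<\varepsilon$; choosing such $\mathbf{w}_{i}$ successively with $\varepsilon=1/i$ and $|\mathbf{w}_{i}|>|\mathbf{w}_{i-1}|$ gives the required sequence. The statement for $\pi(\mathfrak{A}(W))$ is then immediate by applying the case just proved to the state $\phi\circ\pi$ on $\mathfrak{A}(W)$: since $\pi(P_{\mathbf{w}})=\tilde{P}_{\mathbf{w}}$ we get $(\phi\circ\pi)(P_{\mathbf{w}})=\phi(\tilde{P}_{\mathbf{w}})$.

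The only non-bookkeeping point is the first step: one needs the shrunk parameter $(1-\eta)q$, not just $q$ itself, to stay outside $\mathcal{R}$, and this robustness is exactly what $q\notin\overline{\mathcal{R}^{\prime}}$ buys over $q\notin\mathcal{R}^{\prime}$ — it is also the reason the closure appears in the main theorem. Once the parameter $r$ is in hand, Lemmas \ref{operator} and \ref{step 0} do all the remaining work, and the rest is elementary.
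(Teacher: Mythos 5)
Your proof is correct, and the core of it takes a genuinely different route from the paper's. Both arguments rest on the same three ingredients: the openness of the complement of $\overline{\mathcal{R}^{\prime}}$ (used to shrink the parameter while staying outside the region of convergence), Lemma \ref{step 0} (to pass to the subseries over $\{\mathbf{w}:\mathbf{g}\leq\mathbf{w}^{-1}\}$), and Lemma \ref{operator} (to get convergence of $\sum_{\mathbf{g}\leq\mathbf{w}^{-1}}(q^{\prime})^{|\mathbf{w}|}\phi(P_{\mathbf{w}})$). They differ in how the sequence $(\mathbf{w}_{i})$ is extracted. The paper argues constructively: it applies the root test to both series, partitions each sphere $\{|\mathbf{w}|=l_{i},\ \mathbf{g}\leq\mathbf{w}^{-1}\}$ into at most $(l_{i}+1)^{|S|}$ multi-degree classes $C_{\mathbf{w}}$, and selects in a class of maximal weight the element minimizing $\phi(P_{\mathbf{w}})$; this yields the explicit rate $q_{\mathbf{w}_{i}}^{-1}\phi(P_{\mathbf{w}_{i}})\leq(l_{i}+1)^{|S|}(L/C)^{l_{i}}$. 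You instead negate the conclusion to a uniform eventual lower bound $\phi(P_{\mathbf{w}})\geq\delta q_{\mathbf{w}}$ on $\{\mathbf{g}\leq\mathbf{w}^{-1},\ |\mathbf{w}|\geq N\}$ (the negation is correctly formulated, and finiteness of $S$ is indeed what lets you turn the resulting witnesses into a sequence of strictly increasing length) and then compare the convergent series $\phi(\mathbf{Q}_{1-\eta}^{\mathbf{g}})$ term by term with $\delta\sum r_{\mathbf{w}}$ for $r=(1-\eta)q$, which diverges by Lemma \ref{step 0}. This bypasses the root test and all the combinatorial bookkeeping with the classes $C_{\mathbf{w}}$; the price is that you obtain no quantitative decay rate, but none is needed downstream, since Propositions \ref{characters} and \ref{step 2} only use $q_{\mathbf{w}_{i}}^{-1}\phi(P_{\mathbf{w}_{i}})\rightarrow0$. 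Your deduction of the second statement from the first via the state $\phi\circ\pi$ is the same as the paper's.
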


\begin{proof}
The set $\mathbb{R}_{>0}^{(W,S)}\setminus\overline{\mathcal{R}^{\prime}}$ is open in $\mathbb{R}_{>0}^{(W,S)}$, so there exist positive real numbers $q^{\prime},\lambda\in\left(0,1\right)$ such that $q^{\prime}q:=(q^{\prime}q_{s})_{s\in S}\in\mathbb{R}_{>0}^{(W,S)}\setminus\overline{\mathcal{R}^{\prime}}$ and $\lambda q^{\prime}q:=(\lambda q^{\prime}q_{s})_{s\in S}\in\mathbb{R}_{>0}^{(W,S)}\setminus\overline{\mathcal{R}^{\prime}}$. In particular, Lemma \ref{step 0} implies that the series $\sum_{\mathbf{g}\leq\mathbf{w}^{-1}}\lambda^{\left|\mathbf{w}\right|}(q^{\prime}q)_{\mathbf{w}}$ diverges. By the root test criterium for convergence,
\begin{eqnarray}
\nonumber
\limsup_{l}\left(\sum_{\mathbf{w}\in W\text{: }\left|\mathbf{w}\right|=l,\mathbf{g}\leq\mathbf{w}^{-1}}\lambda^{l}(q^{\prime}q)_{\mathbf{w}}\right)^{1/l}\geq1
\end{eqnarray}
and hence
\begin{eqnarray}
\nonumber
\limsup_{l}\left(\sum_{\mathbf{w}\in W\text{: }\left|\mathbf{w}\right|=l,\mathbf{g}\leq\mathbf{w}^{-1}}(q^{\prime}q)_{\mathbf{w}}\right)^{1/l}>1\text{.}
\end{eqnarray}
One can thus find a strictly increasing sequence $\left(l_{i}\right)_{i\in\mathbb{N}}\subseteq\mathbb{N}$ and a constant $C>1$ such that for all $i\in\mathbb{N}$,
\begin{eqnarray} \label{*}
\left(\sum_{\mathbf{w}\in W\text{: }\left|\mathbf{w}\right|=l_{i},\mathbf{g}\leq\mathbf{w}^{-1}}(q^{\prime}q)_{\mathbf{w}}\right)^{1/l_{i}}\geq C\text{.}
\end{eqnarray}
For $\mathbf{w}\in W$ define the set
\begin{eqnarray}
\nonumber
C_{\mathbf{w}}:=\left\{ \mathbf{v}\in W\mid\mathbf{g}\leq\mathbf{v}^{-1}\text{ and }z_{\mathbf{v}}=z_{\mathbf{w}}\text{ for all }z=(z_{s})_{s\in S}\in\mathbb{C}^{(W,S)}\right\}
\end{eqnarray}
and note that the elements in $C_{\mathbf{w}}$ all have the same length. Choose for every $i\in\mathbb{N}$ an element $\mathbf{w}_{i}\in W$ with $\#C_{\mathbf{w}_{i}}(q^{\prime}q)_{\mathbf{w}_{i}}=\max_{\left|\mathbf{w}\right|=l_{i},\mathbf{g}\leq\mathbf{w}^{-1}}\#C_{\mathbf{w}}(q^{\prime}q)_{\mathbf{w}}$ that has length $l_i$ and satisfies $\mathbf{g} \leq \mathbf{w}_i^{-1}$. Since by the definition of $C_{\mathbf{w}_{i}}$ the equality $\#C_{\mathbf{v}}(q^{\prime}q)_{\mathbf{v}}=\#C_{\mathbf{w}_{i}}(q^{\prime}q)_{\mathbf{w}_{i}}$ holds for all $\mathbf{v}\in C_{\mathbf{w}_{i}}$, this element can be chosen in such a way that $\phi(P_{\mathbf{w}_{i}})\leq\phi(P_{\mathbf{v}})$ for all $\mathbf{v}\in C_{\mathbf{w}_{i}}$. Now, by picking a suitable subset $\mathcal{M}\subseteq W$ of elements $\mathbf{w}\in W$ with length $l_{i}$ and $\mathbf{g}\leq\mathbf{w}^{-1}$, the sum $\sum_{\mathbf{w}\in W\text{: }\left|\mathbf{w}\right|=l_{i},\mathbf{g}\leq\mathbf{w}^{-1}}(q^{\prime} q)_{\mathbf{w}}$ can be written as $\sum_{\mathbf{w}\in\mathcal{M}}\#C_{\mathbf{w}}(q^{\prime} q)_{\mathbf{w}}$. By the choice of $\mathbf{w}_{i}$ we hence have
\begin{eqnarray}
\nonumber
\sum_{\mathbf{w}\in W\text{: }\left|\mathbf{w}\right|=l_{i},\mathbf{g}\leq\mathbf{w}^{-1}}(q^{\prime}q)_{\mathbf{w}}\leq\left(l_{i}+1\right)^{\#S}\#C_{\mathbf{w}_{i}}(q^{\prime}q)_{\mathbf{w}_{i}}
\end{eqnarray}
which implies in combination with \eqref{*} that for all $i\in\mathbb{N}$,
\begin{eqnarray} \label{**}
\#C_{\mathbf{w}_{i}}(q^{\prime})^{l_{i}}q_{\mathbf{w}_{i}}\geq\frac{C^{l_{i}}}{\left(l_{i}+1\right)^{\#S}}\text{.}
\end{eqnarray}
It follows from Lemma \ref{operator} that the series $\sum_{\mathbf{w}\in W\text{: } \mathbf{g}\leq\mathbf{w}^{-1}}(q^{\prime})^{\left|\mathbf{w}\right|}\phi(P_{\mathbf{w}})$ converges. By the same argument as above we hence have that
\begin{eqnarray}
\nonumber
\limsup_{l}\left(\sum_{\mathbf{w}\in W\text{: }\left|\mathbf{w}\right|=l,\mathbf{g}\leq\mathbf{w}^{-1}}(q^{\prime})^{l}\phi(P_{\mathbf{w}})\right)^{1/l}<1\text{.}
\end{eqnarray}
One can therefore assume (by possibly going over to a further subsequence) that
\[\left(\sum_{\mathbf{w}\in W\text{: }\left|\mathbf{w}\right|=l_{i}\text{, }\mathbf{g}\leq\mathbf{w}^{-1}}(q^{\prime})^{l_{i}}\phi(P_{\mathbf{w}})\right)^{1/l_{i}}\leq L
\] for all $i\in\mathbb{N}$ where $0<L<1$ . But then, by the choice of $\mathbf{w}_{i}$,
\begin{eqnarray}
\nonumber
\#C_{\mathbf{w}_{i}}(q^{\prime})^{l_{i}}\phi(P_{\mathbf{w}_{i}})\leq(q^{\prime})^{l_{i}}\sum_{\mathbf{w}\in C_{\mathbf{w}_{i}}}\phi(P_{\mathbf{w}})\leq\sum_{\mathbf{w}\in W\text{: }\left|\mathbf{w}\right|=l_{i},\mathbf{g}\leq\mathbf{w}^{-1}}(q^{\prime})^{l_{i}}\phi(P_{\mathbf{w}})\leq L^{l_{i}}
\end{eqnarray}
and thus with \eqref{**}
\begin{eqnarray}
\nonumber
0\leq q_{\mathbf{w}_{i}}^{-1}\phi(P_{\mathbf{w}_{i}})<\frac{L^{l_{i}}}{\#C_{\mathbf{w}_{i}}(q^{\prime})^{l_{i}}q_{\mathbf{w}_{i}}}\leq\left(l_{i}+1\right)^{\#S}\left(\frac{L}{C}\right)^{l_{i}}\rightarrow0\text{.}
\end{eqnarray}
This implies the first part of the statement. The second part is an immediate consequence, since $\pi(\mathfrak{A}(W))$ is a quotient of $\mathfrak{A}(W)$. That finishes the proof.
\end{proof}

\begin{remark}
The proof of Proposition \ref{step 1} significantly simplifies in the case of single-parameters $q$. Indeed, if we follow the notation of Proposition \ref{step 1} and assume that $q_{s}=q_{t}$ for all $s,t\in S$, Lemma \ref{operator} implies that for $i\in\mathbb{N}$ and $0<q^{\prime}<1$,
\begin{eqnarray}
\nonumber
\sum_{\mathbf{w}\in W\text{: }\left|\mathbf{w}\right|=i,\mathbf{g}\leq\mathbf{w}^{-1}}(q^{\prime})^{i}\phi(P_{\mathbf{w}})\leq\phi(\mathbf{Q}_{q^{\prime}}^{\mathbf{g}})\text{.}
\end{eqnarray}
One can thus find an element $\mathbf{w}_{i}$ of length $i$ with $\mathbf{g}\leq\mathbf{w}_{i}^{-1}$ such that $\phi(P_{\mathbf{w}_{i}})\leq\left(\#L_{i}^{\mathbf{g}}(q^{\prime})^{i}\right)^{-1}\phi({\mathbf{Q}}_{q^{\prime}}^{\mathbf{g}})$ where $L_{i}^{\mathbf{g}}:=\left\{ \mathbf{w}\in W\mid\left|\mathbf{w}\right|=i\text{, }\mathbf{g}\leq\mathbf{w}_{i}^{-1}\right\}$. We get that
\begin{eqnarray}
\nonumber
q_{\mathbf{w}_{i}}^{-1}\phi(P_{\mathbf{w}_{i}})\leq\frac{\phi(\mathbf{Q}_{q^{\prime}}^{\mathbf{g}})}{\#L_{i}^{\mathbf{g}}(q^{\prime}q)_{\mathbf{w}_{i}}}\text{.}
\end{eqnarray}
The Cauchy-Hadamard formula (for radii of convergence of power series) implies that for increasing $i$, if $q^{\prime}$ is close enough to $1$, the expression on the right approaches $0$.
\end{remark}

\vspace{1mm}


\section{Central projections in Hecke C$^{\ast}$-algebras} \label{3}

In \cite{Raum} Raum and Skalski, generalizing the single-parameter results by Garncarek \cite{Gar}, proved that for a right-angled, irreducible, finite rank Coxeter system $(W,S)$ with at least three generators and $q=(q_{s})_{s\in S}\in\mathbb{R}_{>0}^{(W,S)}$ the corresponding Hecke-von Neumann algebra $\mathcal{N}_{q}(W)$ decomposes as $\mathcal{N}_{q}(W)\cong M\oplus\bigoplus_{\epsilon\in\left\{ -1,1\right\} ^{(W,S)}\text{: }\left|q_{\epsilon}\right|\in\mathcal{R}^{\prime}}\mathbb{C}$ where $q_{\epsilon}:=(\epsilon_{s}q_{s}^{\epsilon_{s}})_{s\in S}, \left|q_{\epsilon}\right|:=(q_{s}^{\epsilon_{s}})_{s\in S}$ and where $M$ is a factor. In particular, $\mathcal{N}_{q}(W)$ is a factor if and only if $q\in\mathbb{R}_{>0}^{(W,S)}\setminus\mathcal{R}^{\prime}$. It is a natural question whether for $q\in\mathcal{R}^{\prime}$ the central projections in $\mathcal{N}_{q}(W)$ are already contained in the corresponding Hecke C$^{\ast}$-algebra $C_{r,q}^{\ast}(W)$. We will prove this by using a Haagerup-type inequality from \cite{Mario}. We will further characterize the characters (i.e. unital, linear, multiplicative functionals) of right-angled Hecke C$^{\ast}$-algebras.


\subsection{The center of right-angled Hecke C$^{\ast}$-algebras}

\begin{theorem}[{\cite[Theorem 3.4]{Mario}}] \label{Haagerup}
Let $(W,S)$ be a right-angled, finite rank Coxeter system and let $q=(q_{s})_{s\in S}\in\mathbb{R}_{>0}^{(W,S)}$. Then there exists a constant $C>0$ such that for every $l\in\mathbb{N}_{\geq1}$ and $x\in C_{r,q}^{\ast}(W)$ of the form $x:=\sum_{\mathbf{w}\in W\text{: }\left|\mathbf{w}\right|=l}c_{\mathbf{w}}T_{\mathbf{w}}^{(q)}$ with coefficients $c_{\mathbf{w}}\in\mathbb{C}$ we have $\left\Vert x\right\Vert \leq Cl\left\Vert x \delta_e \right\Vert _{2}$.
\end{theorem}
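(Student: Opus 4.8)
The plan is to prove the inequality $\|x\| \leq Cl\|x\|_2$ for elements $x$ supported on the sphere of radius $l$ by the standard Haagerup-type decomposition argument, adapted to the Hecke deformation. First I would recall that $x$ acts on $\ell^2(W)$ and that it suffices to estimate $\|x\xi\|_2$ for an arbitrary $\xi \in \ell^2(W)$ with $\|\xi\|_2 = 1$, and moreover (by linearity and a limiting argument) one may assume $\xi = \sum_{\mathbf{v} \in W} \xi(\mathbf{v})\delta_{\mathbf{v}}$ is finitely supported. The key structural input is a uniform bound on how the operators $T_{\mathbf{w}}^{(q)}$, $|\mathbf{w}| = l$, transport the basis vector $\delta_{\mathbf{v}}$: namely, using the multiplication rule \eqref{multiplication} iteratively (or the decomposition in Lemma \ref{Martijn}), $T_{\mathbf{w}}^{(q)}\delta_{\mathbf{v}}$ is a linear combination of basis vectors $\delta_{\mathbf{u}}$ with $\big||\mathbf{u}| - |\mathbf{v}|\big| \leq l$, and with coefficients controlled by powers of the $q_s$ and $p_s(q)$. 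I would record this as a preliminary lemma: there is a constant depending only on $q$ and a fixed reduced form of $\mathbf{w}$ so that the $\ell^2$-mass of $T_{\mathbf{w}}^{(q)}\delta_{\mathbf{v}}$ spreads over at most finitely many "levels" $|\mathbf{u}| = |\mathbf{v}| + k$, $-l \le k \le l$.

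Next I would split the word-length filtration: write $\xi = \sum_{n \geq 0} \xi_n$ where $\xi_n$ is the restriction of $\xi$ to the sphere $\{|\mathbf{v}| = n\}$, and correspondingly decompose $x = \sum_{k=-l}^{l} x^{(k)}$ where $x^{(k)}$ captures the part of $x$ that shifts word length by exactly $k$; that is, $x^{(k)}\xi_n$ lives on the sphere of radius $n+k$. Each $x^{(k)}$ is "tridiagonal-like" with respect to the grading, so by the Cauchy–Schwarz / almost-orthogonality trick (the same one Haagerup uses for free groups and that appears in \cite{Mario2}), $\|x^{(k)}\|$ is bounded by the supremum over $n$ of the norm of the $(n+k,n)$-block, and those blocks are essentially disjointly supported. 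It then remains to bound each block norm by a constant multiple of $\|x\|_2$. Summing over the $2l+1$ values of $k$ produces the factor $l$ — in fact one gets $\|x\| \leq (2l+1)\max_k \|x^{(k)}\| \lesssim l\|x\|_2$, which is the claimed bound up to the constant $C$.

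The block estimate is where the deformation parameters enter and is the step I expect to be the main obstacle. For the undeformed case $q = 1$, the operator $T_{\mathbf{w}}^{(1)} = \lambda_{\mathbf{w}}$ is a unitary permutation of the basis and the block norms are literally $\ell^2$-norms of the coefficient vector, giving the classical Haagerup inequality. For $q \neq 1$ one must control the extra terms $p_s(q)P_s$ appearing in $T_s^{(q)} = T_s^{(1)} + p_s(q)P_s$: after expanding $T_{\mathbf{w}}^{(q)}$ via Lemma \ref{Martijn} into a sum over triples $(\mathbf{w}', \Gamma, \mathbf{w}'')$, one sees that for fixed $\mathbf{v}$ the images $T_{\mathbf{w}}^{(q)}\delta_{\mathbf{v}}$, as $\mathbf{w}$ ranges over the sphere, can overlap in a controlled number of ways determined by the right-angled cancellation combinatorics (the count $\kappa_{\mathbf{w}}(l) \leq Cl^{\#S-2}$ from \cite[Lemma 4.4]{Caspers} is exactly the relevant polynomial bound). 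The upshot is that the Gram matrix of the family $\{T_{\mathbf{w}}^{(q)}\delta_{\mathbf{v}}\}_{|\mathbf{w}|=l}$ has row sums bounded by a constant times a power of $l$, which by Schur's test yields the block bound with the stated linear-in-$l$ loss. I would then assemble these pieces: preliminary spreading lemma, grading decomposition, almost-orthogonality across $k$, and the Schur-test block estimate, to conclude $\|x\| \leq Cl\|x\|_2$.
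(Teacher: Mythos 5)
This theorem is not proved in the present paper at all: it is imported from \cite[Theorem 3.4]{Mario}, where it is derived from a Khintchine-type inequality for graph products (in the spirit of Ricard--Xu, with column/row Hilbert space estimates), not by the direct block-decomposition argument you outline. So there is no in-paper proof to compare against, and your route differs from the source's in any case.

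Your sketch has a genuine gap located exactly at the block estimate, and it is not cosmetic. First, the accounting is inconsistent: in the assembly step you extract the factor $l$ from summing over the $2l+1$ length-shifts $k$, which requires $\Vert x^{(k)}\Vert \leq C\Vert x\Vert_2$ with $C$ independent of $l$; yet in the block estimate you concede that the Schur test only yields the block bound ``with the stated linear-in-$l$ loss''. The factor of $l$ cannot come from both places; as written the argument produces at best $C\,l^{1+\alpha}\Vert x\Vert_2$ for some $\alpha>0$ depending on the graph. Second, and more fundamentally, the step that makes Haagerup's free-group argument close --- a word $\mathbf{w}$ of length $l$ splits \emph{uniquely} as $\mathbf{w}=\mathbf{w}_1\mathbf{w}_2$ with prescribed lengths, so each block has norm at most the Hilbert--Schmidt norm $\Vert x\Vert_2$ of the reindexed coefficient matrix --- fails in the right-angled case: the number of such splittings is only controlled by $\kappa_{\mathbf{w}}(j)\leq Cj^{\#S-2}$, and this multiplicity inflates the block norms by a polynomial in $l$ of graph-dependent degree. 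This loss is unavoidable, because the linear bound as literally quoted is false in general: take the right-angled system whose Coxeter diagram consists of four disjoint $\infty$-labelled edges, so $W\cong D_{\infty}^{\times 4}$ is amenable. At $q=1$ the trivial representation extends to $C_{r}^{\ast}(W)$, so $x=\sum_{|\mathbf{w}|=l}T_{\mathbf{w}}^{(1)}$ satisfies $\Vert x\Vert\geq\#\{\mathbf{w}:|\mathbf{w}|=l\}\sim cl^{3}$ while $\Vert x\Vert_2\sim cl^{3/2}$, whence $\Vert x\Vert/\Vert x\Vert_2\sim cl^{3/2}$, not $O(l)$. (One can make the system irreducible by adding a generator not commuting with any other; the conditional expectation onto the special subgroup preserves the computation.) Whatever the precise exponent in \cite[Theorem 3.4]{Mario}, it must depend on the graph, and any polynomial exponent suffices for the only use made of the theorem here (Proposition \ref{projections}); but no proof of the literally stated linear bound, including yours, can succeed, and yours is vague precisely at the point where the true polynomial degree is determined.
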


We will further need the following easy lemma.

\begin{lemma} \label{openness}
Let $(W,S)$ be a finite rank Coxeter system. Then the intersection $\mathcal{R}\cap\mathbb{R}_{>0}^{(W,S)}$ of the region of convergence $\mathcal{R}$ of the growth series $W(z)=\sum_{\mathbf{w}\in W}z_{\mathbf{w}}$ with $\mathbb{R}_{>0}^{(W,S)}$ is open in $\mathbb{R}_{>0}^{(W,S)}$.
\end{lemma}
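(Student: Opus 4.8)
The plan is to reduce this multi-parameter statement to an elementary one-variable fact about rational functions. The first observation is that $\mathcal{R}\cap\mathbb{R}_{>0}^{(W,S)}$ is downward closed: if $q\in\mathcal{R}$ and $q'\in\mathbb{R}_{>0}^{(W,S)}$ satisfies $q'_s\leq q_s$ for all $s\in S$, then $q'_{\mathbf{w}}\leq q_{\mathbf{w}}$ for every $\mathbf{w}\in W$, so $\sum_{\mathbf{w}\in W}q'_{\mathbf{w}}\leq\sum_{\mathbf{w}\in W}q_{\mathbf{w}}<\infty$ and hence $q'\in\mathcal{R}$. Consequently it suffices, given $q\in\mathcal{R}\cap\mathbb{R}_{>0}^{(W,S)}$, to produce a single point $q'\in\mathcal{R}\cap\mathbb{R}_{>0}^{(W,S)}$ with $q'_s>q_s$ for all $s\in S$; then $\{z\in\mathbb{R}_{>0}^{(W,S)}\mid z_s<q'_s\text{ for all }s\in S\}$ is an open neighbourhood of $q$ in $\mathbb{R}_{>0}^{(W,S)}$ which, by downward closedness, is contained in $\mathcal{R}$.

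To produce such a $q'$ I will restrict $W(z)$ to the ray through $q$ and use that the growth series $W(z)$ is a rational function of $z=(z_s)_{s\in S}$, say $W(z)=P(z)/Q(z)$ with $P,Q$ polynomials and $Q(0)\neq0$ (this is classical; see \cite{Davis}). If $W$ is finite, then $W(z)$ is a polynomial, $\mathcal{R}=\mathbb{C}^{(W,S)}$, and there is nothing to prove; so assume $W$ is infinite. Since a subword of a reduced word is reduced and $W$ has elements of arbitrarily large length, every length $n\in\mathbb{N}$ is attained, so $b_n:=\sum_{\mathbf{w}\in W,\,|\mathbf{w}|=n}q_{\mathbf{w}}>0$ for all $n$. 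The function $t\mapsto W(tq)=P(tq)/Q(tq)$ is a rational function of the single complex variable $t$, regular at $t=0$ since $Q(0)\neq0$, with Taylor expansion $\sum_{n\geq0}b_nt^n$ about $t=0$. Being non-polynomial it has a pole, so its radius of convergence $r$ is finite and equals the modulus of a nearest pole.

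Finally, $q\in\mathcal{R}$ gives $\sum_n b_n=W(q)<\infty$, hence $b_n\to0$ and $r\geq1$. Suppose $r=1$. By Pringsheim's theorem (the coefficients $b_n$ being non-negative), $t=r=1$ is then a singularity, hence a pole, of $t\mapsto W(tq)$, so $W(tq)\to\infty$ as $t\to1^-$. On the other hand, since the $b_n$ are non-negative and summable, $\sum_{n>N}b_nt^n\leq\sum_{n>N}b_n\to0$ uniformly for $t\in[0,1]$, so $t\mapsto\sum_n b_nt^n$ is continuous on $[0,1]$ and agrees with $W(tq)$ there, a contradiction. Therefore $r>1$; choosing $\delta>0$ with $1+\delta<r$ gives $W((1+\delta)q)=\sum_n b_n(1+\delta)^n<\infty$, so $q':=(1+\delta)q\in\mathcal{R}\cap\mathbb{R}_{>0}^{(W,S)}$ satisfies $q'_s>q_s$ for all $s\in S$, and the reduction in the first paragraph finishes the proof. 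The only input beyond elementary comparison of non-negative series and the Abel/Pringsheim facts for one-variable power series is the rationality of the Coxeter growth series, which is standard, so I do not anticipate any genuine obstacle.
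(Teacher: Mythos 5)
Your proof is correct and follows essentially the same route as the paper: restrict the growth series to the ray $t\mapsto W(tq)$, use rationality of the Coxeter growth series to identify the radius of convergence with the distance to the nearest pole and push it strictly past $1$, then conclude via monotonicity of $\mathcal{R}\cap\mathbb{R}_{>0}^{(W,S)}$ in the parameters. You are somewhat more explicit than the paper at two points it leaves implicit — the downward-closedness reduction and the exclusion of a pole on $|t|=1$ (which you handle via Pringsheim, while the paper deduces it directly from absolute convergence on the closed disk) — but the argument is the same.
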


\begin{proof}
Assume that the set $\mathcal{R}\cap\mathbb{R}_{>0}^{(W,S)}$ is not open in $\mathbb{R}_{>0}^{(W,S)}$ and let $q \in\mathcal{R}\cap\mathbb{R}_{>0}^{(W,S)}$ be a point on its boundary. Since $\sum_{\mathbf{w}\in W}q_{\mathbf{w}}$ converges, the power series $f(z):=\sum_{\mathbf{w}\in W}q_{\mathbf{w}}z^{\left|\mathbf{w}\right|}$ absolutely converges for all $z\in\mathbb{C}$ with $\left|z\right|\leq1$. But the radius of convergence of $f$ coincides with the distance of the origin to the closest pole of $f$, hence there exists $\lambda>1$ such that $\sum_{\mathbf{w}\in W}q_{\mathbf{w}}z^{\left|\mathbf{w}\right|}$ absolutely converges for all $z\in\mathbb{C}$ with $\left|z\right|<\lambda$. This implies that $(2^{-1}(1+\lambda)q_{s})_{s\in S}\in\mathcal{R}\cap\mathbb{R}_{>0}^{(W,S)}$ which contradicts the choice of $q$.
\end{proof}

\begin{proposition} \label{projections}
Let $(W,S)$ be a right-angled, finite rank Coxeter system, let $q=(q_{s})_{s\in S}\in\mathbb{R}_{>0}^{(W,S)}$ with $0<q_{s}\leq1$ for all $s\in S$ be a multi-parameter and let $W(z)=\sum_{\mathbf{w}\in W}z_{\mathbf{w}}$ be the growth series of $(W,S)$. Further, let $\epsilon\in\left\{ -1,1\right\} ^{(W,S)}$, $q_{\epsilon}:=\left(\epsilon_{s}q_{s}^{\epsilon_{s}}\right)_{s\in S}$ and assume that $\left|q_{\epsilon}\right|:=\left(q_{s}^{\epsilon_{s}}\right)_{s\in S}\in\mathcal{R}$. Then the operator
\begin{eqnarray}
\nonumber
E_{q,\epsilon}=\frac{1}{W(\left|q_{\epsilon}\right|)}\sum_{i=0}^{\infty} \; \sum_{\mathbf{w}\text{: }\left|\mathbf{w}\right|=i}\left(\sqrt{q}\right)_{\mathbf{w},\epsilon}T_{\mathbf{w}}^{(q)}
\end{eqnarray}
exists (where the limit is taken with respect to the operator norm), it is a projection in $C_{r,q}^\ast (W)$ and it satisfies $T_{s}^{(q)}E_{q,\epsilon}=E_{q,\epsilon}T_{s}^{(q)}=\epsilon_{s}q_{s}^{\frac{\epsilon_{s}}{2}}E_{q,\epsilon}$ for all $s\in S$. For distinct $\epsilon,\epsilon^{\prime}\in\left\{ -1,1\right\} ^{(W,S)}$ with $\left|q_{\epsilon}\right|,\left|q_{\epsilon^{\prime}}\right|\in\mathcal{R}$ the projections $E_{q,\epsilon}$ and $E_{q,\epsilon^{\prime}}$ are orthogonal to each other.
\end{proposition}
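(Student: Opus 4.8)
The plan is to first establish norm-convergence of the defining series for $E_{q,\epsilon}$ and then verify algebraically that the limit is a projection with the stated eigenvalue property; the orthogonality for distinct $\epsilon$ follows from that eigenvalue property. For the convergence, I would write $x_i := \sum_{|\mathbf{w}|=i}(\sqrt{q})_{\mathbf{w},\epsilon}T_{\mathbf{w}}^{(q)}$ for the $i$-th homogeneous block and estimate $\|x_i\|$ by the Haagerup-type inequality of Theorem \ref{Haagerup}: one has $\|x_i\| \le C i \|x_i\|_2$. Since the $T_{\mathbf{w}}^{(q)}$ for $|\mathbf{w}|=i$ have $\ell^2$-norms that are comparable (after normalization) to $q_{\mathbf{w}}^{1/2}$ — more precisely $\|T_{\mathbf{w}}^{(q)}\delta_e\|_2^2 = q_{\mathbf{w}}$, and the vectors $T_{\mathbf{w}}^{(q)}\delta_e$ are orthogonal across distinct $\mathbf{w}$ — we get $\|x_i\|_2^2 = \sum_{|\mathbf{w}|=i} |(\sqrt q)_{\mathbf{w},\epsilon}|^2 q_{\mathbf{w}}^{-1} \cdot (\text{something})$; let me be careful: with the normalization in the paper, $|(\sqrt q)_{\mathbf{w},\epsilon}|^2 = \prod q_{s_k}^{\epsilon_{s_k}} = (|q_\epsilon|)_{\mathbf{w}}$, and $\tau_q((T_{\mathbf{w}}^{(q)})^\ast T_{\mathbf{w}}^{(q)}) = q_{\mathbf{w}}$, so $\|x_i\|_2^2 = \sum_{|\mathbf{w}|=i} (|q_\epsilon|)_{\mathbf{w}} q_{\mathbf{w}}^{-1}$; when $0<q_s\le 1$ and $\epsilon_s = -1$ this is $q_{s}^{-2}$ per letter, so one should instead bound $\|x_i\|_2^2$ directly by $\sum_{|\mathbf{w}|=i}(|q_\epsilon|)_{\mathbf{w}}$ after re-expressing via $T_{\mathbf{w}}^{(q)}E_{q,\epsilon}$. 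The cleanest route is: since $|q_\epsilon| \in \mathcal{R}$, the scalar series $\sum_i \sum_{|\mathbf{w}|=i}(|q_\epsilon|)_{\mathbf{w}} = W(|q_\epsilon|)$ converges, so $\sum_i (|q_\epsilon|)_{\mathbf{w},\,|\mathbf{w}|=i}$-blocks are summable; combining with the polynomial factor $Ci$ from Theorem \ref{Haagerup} and the fact that $\sum_i i \, r^i < \infty$ whenever $\sum_i r^i$ converges with geometric tail control (which one gets from the root test applied to $W(|q_\epsilon|)$, using Lemma \ref{openness} to push the radius strictly past the evaluation point), the series $\sum_i x_i$ is absolutely norm-convergent.

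Next I would verify the eigenvalue relation. For a single generator $s$, multiply the partial sums $E_{q,\epsilon,N} := W(|q_\epsilon|)^{-1}\sum_{i=0}^N x_i$ on the left by $T_s^{(q)}$ and use the multiplication rule \eqref{multiplication}: $T_s^{(q)}T_{\mathbf{w}}^{(q)}$ equals $T_{s\mathbf{w}}^{(q)}$ or $T_{s\mathbf{w}}^{(q)}+p_s(q)T_{\mathbf{w}}^{(q)}$ according as $s \nleq \mathbf{w}$ or $s \le \mathbf{w}$. Reindexing the sum over $\mathbf{w}$ of length $i$ in terms of $s\mathbf{w}$, and tracking the coefficient $(\sqrt q)_{\mathbf{w},\epsilon}$, one finds that $T_s^{(q)}E_{q,\epsilon,N}$ telescopes to $\epsilon_s q_s^{\epsilon_s/2}E_{q,\epsilon,N}$ up to a boundary term supported on words of length $N+1$, which vanishes in the limit by the convergence just established. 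The computation is the standard Hecke-algebra check: the scalar $\epsilon_s q_s^{\epsilon_s/2}$ is exactly the root of $X^2 - p_s(q)X - 1 = 0$ (equivalently $X - q_s^{-1/2}(q_s-1) - X^{-1}$-type relation), so $T_s^{(q)}$ acting on the "geometric series in the $\epsilon$-direction" reproduces the series with that eigenvalue. The right-multiplication identity $E_{q,\epsilon}T_s^{(q)} = \epsilon_s q_s^{\epsilon_s/2}E_{q,\epsilon}$ follows identically, or by applying the $\ast$-operation together with $(T_{\mathbf{w}}^{(q)})^\ast = T_{\mathbf{w}^{-1}}^{(q)}$ and noting the defining sum is invariant under $\mathbf{w}\mapsto\mathbf{w}^{-1}$ (since both $|\mathbf{w}|$ and $(\sqrt q)_{\mathbf{w},\epsilon}$ are). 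Once $E_{q,\epsilon}$ commutes with every generator and hence lies in the center, self-adjointness is the symmetry just mentioned, and $E_{q,\epsilon}^2 = E_{q,\epsilon}$ follows because $E_{q,\epsilon}\cdot T_{\mathbf{w}}^{(q)} = \big(\prod_{k}\epsilon_{s_k}q_{s_k}^{\epsilon_{s_k}/2}\big)E_{q,\epsilon} = (\sqrt q)_{\mathbf{w},\epsilon}\,E_{q,\epsilon}$ for each reduced word, so $E_{q,\epsilon}E_{q,\epsilon,N} = W(|q_\epsilon|)^{-1}\sum_{i\le N}\sum_{|\mathbf{w}|=i}(\sqrt q)_{\mathbf{w},\epsilon}^2\,q_{\mathbf{w}}^{-1}\cdots$ — again I should be careful with normalization, but the point is $E_{q,\epsilon}T_{\mathbf{w}}^{(q)} = (\sqrt q)_{\mathbf{w},\epsilon}E_{q,\epsilon}$ makes $E_{q,\epsilon}E_{q,\epsilon,N} = W(|q_\epsilon|)^{-1}\big(\sum_{i\le N}\sum_{|\mathbf{w}|=i}(|q_\epsilon|)_{\mathbf{w}}\big)E_{q,\epsilon} \to E_{q,\epsilon}$.

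Finally, for distinct $\epsilon \ne \epsilon'$ pick $s$ with $\epsilon_s \ne \epsilon'_s$. Then $\epsilon_s q_s^{\epsilon_s/2} \ne \epsilon'_s q_s^{\epsilon'_s/2}$ (the signs differ, and when $q_s \ne 1$ the magnitudes may coincide but the signs still separate them; when $q_s = 1$ both magnitudes are $1$ and again the signs differ — so they are genuinely distinct in all cases). From $T_s^{(q)}E_{q,\epsilon} = \epsilon_s q_s^{\epsilon_s/2}E_{q,\epsilon}$ and $E_{q,\epsilon'}T_s^{(q)} = \epsilon'_s q_s^{\epsilon'_s/2}E_{q,\epsilon'}$ we get $\epsilon'_s q_s^{\epsilon'_s/2}\,E_{q,\epsilon'}E_{q,\epsilon} = E_{q,\epsilon'}T_s^{(q)}E_{q,\epsilon} = \epsilon_s q_s^{\epsilon_s/2}\,E_{q,\epsilon'}E_{q,\epsilon}$, forcing $E_{q,\epsilon'}E_{q,\epsilon} = 0$; since both are self-adjoint projections this is orthogonality. \textbf{Main obstacle.} The only real work is the norm-convergence estimate: one must combine Theorem \ref{Haagerup}'s polynomial-in-$l$ growth with the geometric decay of the growth-series tail, and the subtlety is that membership $|q_\epsilon| \in \mathcal{R}$ a priori only gives convergence, not a strict radius margin — this is exactly where Lemma \ref{openness} is needed, to upgrade convergence at $|q_\epsilon|$ to convergence on a slightly larger polyradius and thereby absorb the factor $Cl$.
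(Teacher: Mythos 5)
Your argument for norm-convergence is exactly the paper's: apply Theorem \ref{Haagerup} blockwise and use Lemma \ref{openness} together with the root test to get $\sum_{|\mathbf{w}|=l}|q_{\mathbf{w},\epsilon}|<L^{l}$ for some $0<L<1$, so that $\sum_{l}ClL^{l/2}$ converges; the algebraic identities (eigenvalue relation, idempotency, orthogonality), which the paper only cites and omits, you verify by the standard and correct computations. The normalization worry you raise is unfounded: with the paper's conventions $T_{\mathbf{w}}^{(q)}\delta_{e}=\delta_{\mathbf{w}}$, so the operators $T_{\mathbf{w}}^{(q)}$ are orthonormal in $L^{2}(\tau_{q})$ and $\|x_{i}\|_{2}^{2}=\sum_{|\mathbf{w}|=i}(|q_{\epsilon}|)_{\mathbf{w}}$ on the nose, which is precisely the growth-series block your estimate needs.
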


\begin{proof}
By assumption $\left|q_{\epsilon}\right|\in\mathcal{R}$, so Lemma \ref{openness} implies that there exists $\lambda>1$ such that still $\left|\lambda q_{\epsilon}\right|:=\left(\lambda q_{s}^{\epsilon_{s}}\right)_{s\in S}\in\mathcal{R}$. Using the root test criterium for convergence,
\begin{eqnarray}
\nonumber
\limsup_{l}\left(\sum_{\mathbf{w}\in W\text{: }\left|\mathbf{w}\right|=l}\lambda^{l}\left|q_{\mathbf{w},\epsilon}\right|\right)^{1/l}\leq1
\end{eqnarray}
and hence
\begin{eqnarray}
\nonumber
\limsup_{l}\left(\sum_{\mathbf{w}\in W\text{: }\left|\mathbf{w}\right|=l}\left|q_{\mathbf{w},\epsilon}\right|\right)^{1/l}<1\text{.}
\end{eqnarray}
One can therefore find $l_{0}\in\mathbb{N}$ and $0<L<1$ such that for all $l\geq l_{0}$,
\begin{eqnarray} \label{***}
\sum_{\mathbf{w}\in W\text{: }\left|\mathbf{w}\right|=l}\left|q_{\mathbf{w},\epsilon}\right|<L^{l}\text{.}
\end{eqnarray}
Now set $E_{q,\epsilon}^{(i)}:=\left(W(\left|q_{\epsilon}\right|)\right)^{-1}\sum_{l=0}^{i}\sum_{\mathbf{w}\text{: }\left|\mathbf{w}\right|=l}\left(\sqrt{q}\right)_{\mathbf{w},\epsilon}T_{\mathbf{w}}^{(q)}$. For $i,j\in\mathbb{N}$ with $i<j$ and $i\geq l_{0}$ we have by Theorem \ref{Haagerup} and the inequality \eqref{***} that
\begin{eqnarray}
\nonumber
\left\Vert E_{q,\epsilon}^{(j)}-E_{q,\epsilon}^{(i)}\right\Vert &\leq& \frac{1}{W(\left|q_{\epsilon}\right|)}\sum_{l=i+1}^{j}\left\Vert \sum_{\mathbf{w}\text{: }\left|\mathbf{w}\right|=l}\left(\sqrt{q}\right)_{\mathbf{w},\epsilon}T_{\mathbf{w}}^{(q)}\right\Vert\\
\nonumber
&\leq& \frac{1}{W(\left|q_{\epsilon}\right|)}\sum_{l=i+1}^{j}Cl\sqrt{\sum_{\mathbf{w}\text{: }\left|\mathbf{w}\right|=l}\left|q_{\mathbf{w},\epsilon}\right|}\\
\nonumber
&<& \frac{1}{W(\left|q_{\epsilon}\right|)}\sum_{l=i+1}^{j}ClL^{\frac{l}{2}}\text{.}
\end{eqnarray}
The series $\sum_{l=0}^{\infty}lL^{\frac{l}{2}}$ converges, so $(E_{q,\epsilon}^{(i)})_{i \in\mathbb{N}}\subseteq C_{r,q}^{\ast}(W)$ converges to $E_{q,\epsilon}\in C_{r,q}^{\ast}(W)$. The remaining statements follow from short calculations (compare also with \cite[Lemma 19.2.5]{Davis}, \cite[Theorem 5.3]{Gar} and \cite[Proposition 2.2]{Raum}) that we omit here.
\end{proof}

\begin{remark}
In \cite{Raum} Raum and Skalski introduced the notion of \emph{Hecke eigenvectors} (for the parameter $q$). These are non-zero elements $\eta \in \ell^2 (W)$ satisfying $T_s^{(q)}\eta \in \mathbb{C} \eta$ for all $s \in S$. The central projections considered in Proposition \ref{projections} are exactly the orthogonal projections onto the Hecke eigenspaces. Note that they are always of finite rank.
\end{remark}

The following corollary follows from \cite[Theorem A]{Raum} and Proposition \ref{isomorphism}.

\begin{corollary}
Let $(W,S)$ be a right-angled, finite rank Coxeter system with $\#S\geq3$ and let $q\in\mathbb{R}_{>0}^{(W,S)}$. Then the center of the Hecke C$^{\ast}$-algebra $C_{r,q}^{\ast}(W)$ coincides with the center of the Hecke-von Neumann algebra $\mathcal{N}_{q}(W)$.
\end{corollary}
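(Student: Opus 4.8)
The plan is to leverage the theorem of Raum and Skalski (\cite[Theorem A]{Raum}), which gives the explicit decomposition $\mathcal{N}_{q}(W)\cong M\oplus\bigoplus_{\epsilon\in\left\{ -1,1\right\} ^{(W,S)}\text{: }\left|q_{\epsilon}\right|\in\mathcal{R}}\mathbb{C}$ with $M$ a factor. In this decomposition the minimal central projections are precisely the rank-one projections onto the Hecke eigenspaces together with the complementary projection $1-\sum_{\epsilon}E_{q,\epsilon}$; the center $Z(\mathcal{N}_{q}(W))$ is the (finite-dimensional, abelian) von Neumann algebra generated by these projections. So the statement $Z(C_{r,q}^{\ast}(W))=Z(\mathcal{N}_{q}(W))$ will follow once we show two inclusions: that each $E_{q,\epsilon}$ (and hence $1-\sum_\epsilon E_{q,\epsilon}$) lies in $C_{r,q}^{\ast}(W)$, and conversely that any element of $C_{r,q}^{\ast}(W)$ which is central in $\mathcal{N}_{q}(W)$ must already be a linear combination of these projections, i.e. $Z(C_{r,q}^{\ast}(W))\subseteq Z(\mathcal{N}_{q}(W))$.

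First I would dispose of the reduction to the irreducible case: by Proposition \ref{isomorphism} we may assume $0<q_{s}\leq1$ for all $s$, and by Lemma \ref{isomorphism2} the center of a tensor product is the tensor product of the centers, so it suffices to treat irreducible $(W,S)$ (one also checks the $\#S=1,2$ cases separately or notes the hypothesis $\#S\geq3$; for $\#S\leq 2$ one can appeal to Lemma \ref{isomorphism2} and the rank-one cases directly, but since the corollary only claims $\#S\geq3$ this is moot). For the first inclusion, the key input is Proposition \ref{projections}: whenever $\left|q_{\epsilon}\right|\in\mathcal{R}$ the norm-convergent series defining $E_{q,\epsilon}$ lies in $C_{r,q}^{\ast}(W)$, and these projections are pairwise orthogonal. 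Hence the complementary projection $P:=1-\sum_{\epsilon:\,|q_\epsilon|\in\mathcal{R}}E_{q,\epsilon}$ also lies in $C_{r,q}^{\ast}(W)$, and each $E_{q,\epsilon}$ and $P$ are central in $C_{r,q}^{\ast}(W)$ (they commute with every $T_{s}^{(q)}$, by the eigenvalue relation in Proposition \ref{projections}, hence with all of $C_{r,q}^{\ast}(W)$). This shows $Z(\mathcal{N}_{q}(W))\subseteq Z(C_{r,q}^{\ast}(W))$, using that $Z(\mathcal{N}_{q}(W))$ is exactly the span of $\{E_{q,\epsilon}\}\cup\{P\}$ by Raum–Skalski.

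For the reverse inclusion $Z(C_{r,q}^{\ast}(W))\subseteq Z(\mathcal{N}_{q}(W))$, observe that any $z\in Z(C_{r,q}^{\ast}(W))$ in particular commutes with all of $C_{r,q}^{\ast}(W)$, hence with the weakly dense $\mathbb{C}_q[W]$, hence with all of $\mathcal{N}_{q}(W)$ (taking weak limits); therefore $z\in Z(\mathcal{N}_{q}(W))$, and we are done. Thus the only real content is the first inclusion, and the heart of that is Proposition \ref{projections}, which is already proved above. The main obstacle — really the only place where work beyond bookkeeping happens — is ensuring that the Raum–Skalski description of the center is matched exactly by the projections $E_{q,\epsilon}$ constructed here, i.e. that $E_{q,\epsilon}$ is nonzero precisely when $|q_\epsilon|\in\mathcal{R}$ (so that no central direction is missed and none is spuriously added); this is precisely the statement that the Hecke eigenspace for the character $T_s^{(q)}\mapsto \epsilon_s q_s^{\epsilon_s/2}$ is nonzero iff the relevant growth series converges, which is the Garncarek/Raum–Skalski criterion, so it comes for free from \cite[Theorem A]{Raum}. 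I would close by remarking that by Lemma \ref{isomorphism3} and the fact that inductive limits do not enlarge the picture in a way that affects the statement, one could also state an analogue for infinite rank, but the corollary as phrased needs only the finite-rank input.
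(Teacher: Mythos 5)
Your argument is correct and is essentially the proof the paper intends: the paper disposes of this corollary with a bare citation of Raum--Skalski's Theorem A together with Proposition \ref{isomorphism}, and the content you supply (Proposition \ref{projections} places the minimal central projections of $\mathcal{N}_{q}(W)$ inside $C_{r,q}^{\ast}(W)$ where they commute with all generators, while weak density of $\mathbb{C}_{q}[W]$ gives $Z(C_{r,q}^{\ast}(W))\subseteq Z(\mathcal{N}_{q}(W))$) is exactly what that citation leaves implicit. One caveat: your reduction of the reducible case via Lemma \ref{isomorphism2} does not close the argument, because a reducible system with $\#S\geq3$ can have irreducible components of rank $\leq 2$ to which Theorem A does not apply (and for an infinite dihedral component the two centers genuinely differ), so the corollary --- like the Raum--Skalski decomposition quoted just before it --- should be read with the irreducibility hypothesis, under which your proof is complete.
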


One other immediate consequence is that right-angled Hecke C$^{\ast}$-algebras admit a decomposition which is analogous to the one of their von Neumann-algebraic counterparts.

\begin{corollary}
Let $(W,S)$ be a right-angled, finite rank Coxeter system with $\#S\geq3$ and let $q=(q_{s})_{s\in S}\in\mathbb{R}_{>0}^{(W,S)}$. Then the corresponding Hecke C$^{\ast}$-algebra $C_{r,q}^{\ast}(W)$ decomposes as
\begin{eqnarray}
\nonumber
C_{r,q}^{\ast}(W)\cong\pi(C_{r,q}^{\ast}(W))\oplus\bigoplus_{\epsilon\in\left\{ -1,1\right\}^{(W,S)} \text{: }\left|q_{\epsilon}\right|\in\mathcal{R}^{\prime}}\mathbb{C},
\end{eqnarray}
where $\pi$ denotes the quotient map of $\mathcal{B}(\ell^{2}(W))$ onto $\mathcal{B}(\ell^{2}(W))/\mathcal{K}(\ell^{2}(W))$.
\end{corollary}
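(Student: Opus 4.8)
\emph{Proof strategy.} Set $\mathcal{E}:=\{\epsilon\in\left\{ -1,1\right\} ^{(W,S)}\mid\left|q_{\epsilon}\right|\in\mathcal{R}^{\prime}\}$, a finite set, and for $\epsilon\in\mathcal{E}$ let $E_{q,\epsilon}\in C_{r,q}^{\ast}(W)$ be the central projection from Proposition \ref{projections}. The plan is to cut $C_{r,q}^{\ast}(W)$ by the orthogonal central projections $z:=1-\sum_{\epsilon\in\mathcal{E}}E_{q,\epsilon}$ and $(E_{q,\epsilon})_{\epsilon\in\mathcal{E}}$, and to match the $z$-corner with the quotient. First I would record, from the explicit formula in Proposition \ref{projections} together with the remark identifying the $E_{q,\epsilon}$ with the Hecke eigenspace projections, that $E_{q,\epsilon}$ is the orthogonal projection of $\ell^{2}(W)$ onto the \emph{one}-dimensional span of $\sum_{\mathbf{w}\in W}(\sqrt{q})_{\mathbf{w},\epsilon}\delta_{\mathbf{w}}$ (which lies in $\ell^{2}(W)$ exactly because $\left|q_{\epsilon}\right|\in\mathcal{R}$, i.e. $\sum_{\mathbf{w}}\left|q_{\epsilon}\right|_{\mathbf{w}}<\infty$). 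Two consequences: $E_{q,\epsilon}$ is rank one, hence compact, so $\pi(E_{q,\epsilon})=0$; and since a rank-one projection cuts $\mathcal{B}(\ell^{2}(W))$ down to scalars, $E_{q,\epsilon}C_{r,q}^{\ast}(W)E_{q,\epsilon}=\mathbb{C}E_{q,\epsilon}\cong\mathbb{C}$.

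Next, $z$ is a central projection of $\mathcal{N}_{q}(W)$ and hence of $C_{r,q}^{\ast}(W)$, and $z$ together with $(E_{q,\epsilon})_{\epsilon\in\mathcal{E}}$ is a family of mutually orthogonal central projections summing to $1$, so
\begin{eqnarray}
\nonumber
C_{r,q}^{\ast}(W)=zC_{r,q}^{\ast}(W)z\ \oplus\ \bigoplus_{\epsilon\in\mathcal{E}}E_{q,\epsilon}C_{r,q}^{\ast}(W)E_{q,\epsilon}=zC_{r,q}^{\ast}(W)z\ \oplus\ \bigoplus_{\epsilon\in\mathcal{E}}\mathbb{C}\text{.}
\end{eqnarray}
It remains to identify $zC_{r,q}^{\ast}(W)z$ with $\pi(C_{r,q}^{\ast}(W))$. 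Because $\pi(z)=\pi(1)-\sum_{\epsilon\in\mathcal{E}}\pi(E_{q,\epsilon})=\pi(1)$, the map $\pi$ restricts to a $\ast$-homomorphism of $zC_{r,q}^{\ast}(W)z$ \emph{onto} $\pi(C_{r,q}^{\ast}(W))$, with kernel $zC_{r,q}^{\ast}(W)z\cap\mathcal{K}(\ell^{2}(W))$. So the whole proof reduces to showing this corner has no nonzero compact operator, and since $zC_{r,q}^{\ast}(W)z\subseteq z\mathcal{N}_{q}(W)z$ it suffices to prove $z\mathcal{N}_{q}(W)z\cap\mathcal{K}(\ell^{2}(W))=\{0\}$.

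For this I would invoke the Raum--Skalski structure theorem \cite[Theorem A]{Raum}: $\mathcal{N}_{q}(W)\cong M\oplus\bigoplus_{\epsilon\in\mathcal{E}}\mathbb{C}$ with $M$ a factor, where (again by the remark after Proposition \ref{projections}) the $\mathbb{C}$-summands correspond to the $E_{q,\epsilon}$, so that $z\mathcal{N}_{q}(W)z=M$. The factor $M$ carries the faithful trace $\tau_{q}$, hence is finite; and if $M$ were finite-dimensional then so would be $\mathcal{N}_{q}(W)$, forcing $W$ finite, but then $\mathcal{R}^{\prime}=\mathbb{R}_{>0}^{(W,S)}$ and a dimension count in the decomposition gives $M=0$, i.e. $z=0$, so there is nothing to prove. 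Thus we may assume $M$ is an infinite-dimensional finite factor, i.e. a II$_{1}$ factor, in particular a factor not of type I; such a factor has no nonzero finite-rank projection, because a finite-rank $0\neq p\in M$ would make $pMp\subseteq p\,\mathcal{B}(\ell^{2}(W))\,p$ finite-dimensional, hence $pMp$ would contain a minimal projection $p_{0}$, which would be minimal in $M$ — impossible. As a nonzero positive compact operator in $M$ would have a nonzero finite-rank spectral projection lying in $M$, this gives $M\cap\mathcal{K}(\ell^{2}(W))=\{0\}$, so $\pi$ restricts to an isomorphism $zC_{r,q}^{\ast}(W)z\cong\pi(C_{r,q}^{\ast}(W))$, and with the displayed decomposition the statement follows. (In a reducible situation one first passes to the irreducible components via Lemma \ref{isomorphism2}.)

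The step I expect to be the main obstacle is the last one: a priori $C_{r,q}^{\ast}(W)$ could contain compact operators beyond the linear span of the eigenvector projections $E_{q,\epsilon}$, and ruling this out genuinely requires the von Neumann algebraic input that $z\mathcal{N}_{q}(W)z$ is an honest type II$_{1}$ factor — the C$^{\ast}$-algebra on its own does not obviously control its compact part.
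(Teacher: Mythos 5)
Your argument is correct and follows essentially the same route as the paper: cut $C_{r,q}^{\ast}(W)$ by the central projections $E_{q,\epsilon}$ of Proposition \ref{projections}, identify each corner $E_{q,\epsilon}C_{r,q}^{\ast}(W)E_{q,\epsilon}$ with $\mathbb{C}$, and use \cite[Theorem A]{Raum} to see that the complementary corner generates a II$_{1}$ factor, hence meets $\mathcal{K}(\ell^{2}(W))$ trivially and is carried isomorphically onto $\pi(C_{r,q}^{\ast}(W))$. The paper compresses the final step to the single remark that a II$_{1}$ factor contains no nonzero compact operators, which you spell out correctly.
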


\begin{proof}
By Proposition \ref{projections} $C_{r,q}^{\ast}(W)$ decomposes as $A\oplus\bigoplus_{\epsilon\in\left\{ -1,1\right\}^{(W,S)} \text{: }\left|q_{\epsilon}\right|\in\mathcal{R}^{\prime}}\mathbb{C}$ where
\begin{eqnarray}
\nonumber
A=C_{r,q}^{\ast}(W)\prod_{\epsilon\in\left\{ -1,1\right\}^{(W,S)} \text{: }\left|q_{\epsilon}\right|\in\mathcal{R}^{\prime}}(1-E_{q,\epsilon})\subseteq\mathcal{B}(\ell^{2}(W)).
\end{eqnarray}
By \cite[Theorem A]{Raum} the von Neumann algebra $A^{\prime\prime}\subseteq\mathcal{B}(\ell^{2}(W))$ is a factor, necessarily of type $\text{II}_1$, so $A$ contains no compact operators. This implies that $A\cong\pi(C_{r,q}^{\ast}(W))$ from which the claim follows.
\end{proof}


\subsection{Characters on Hecke C$^{\ast}$-algebras} \label{characters}

The operators appearing in Proposition \ref{projections} are projections onto one-dimensional subspaces of $\ell^{2}(W)$ and thus induce characters on the right-angled Hecke C$^{\ast}$-algebras. Let us prove that all characters on right-angled Hecke C$^{\ast}$-algebras arise in such a manner.

\begin{proposition} \label{characters}
Let $(W,S)$ be a right-angled, irreducible, finite rank Coxeter system and $q=(q_{s})_{s\in S}\in\mathbb{R}_{>0}^{(W,S)}$. Then the set of characters of the corresponding Hecke C$^{\ast}$-algebra $C_{r,q}^{\ast}(W)$ is given by
\begin{eqnarray}
\nonumber
\{ \chi_{q_{\epsilon}}\mid\epsilon\in\left\{ -1,1\right\} ^{(W,S)}\text{ with }\left|q_{\epsilon}\right|\in\overline{\mathcal{R}^{\prime}}\} \text{,}
\end{eqnarray}
where $\left|q_{\epsilon}\right|:=(q_{s}^{\epsilon_{s}})_{s\in S}\in\mathbb{R}_{>0}^{(W,S)}$ and $\chi_{q_{\epsilon}}$ satisfies $\chi_{q_{\epsilon}}(T_{s}^{(q)}):=\epsilon_{s}q_{s}^{\frac{\epsilon_{s}}{2}}$ for all $s\in S$.
\end{proposition}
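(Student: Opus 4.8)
The plan is to establish the two inclusions separately: every character of $C_{r,q}^{\ast}(W)$ is one of the $\chi_{q_{\epsilon}}$, and $\chi_{q_{\epsilon}}$ extends to a (then unique) character precisely when $\left|q_{\epsilon}\right|\in\overline{\mathcal{R}^{\prime}}$. By Proposition \ref{isomorphism} we may assume $0<q_{s}\leq1$ for all $s\in S$; note that in a right-angled Coxeter system no two distinct generators are conjugate, so $\mathbb{R}_{>0}^{(W,S)}=\mathbb{R}_{>0}^{S}$, $\left\{-1,1\right\}^{(W,S)}=\left\{-1,1\right\}^{S}$, and no compatibility of $\epsilon$ needs to be checked. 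For the first inclusion I would start from $(T_{s}^{(q)})^{2}=1+p_{s}(q)T_{s}^{(q)}$: the self-adjoint element $T_{s}^{(q)}$ satisfies $t^{2}-p_{s}(q)t-1=0$, whose roots are $q_{s}^{1/2}$ and $-q_{s}^{-1/2}$, so $\mathrm{sp}(T_{s}^{(q)})\subseteq\{q_{s}^{1/2},-q_{s}^{-1/2}\}$. Since a character of a unital C$^{\ast}$-algebra is $\ast$-preserving and takes values in the spectrum, $\chi(T_{s}^{(q)})=\epsilon_{s}q_{s}^{\epsilon_{s}/2}$ for a well-defined $\epsilon\in\left\{-1,1\right\}^{S}$, and multiplicativity together with density of $\mathbb{C}_{q}[W]$ then forces $\chi=\chi_{q_{\epsilon}}$. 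So the remaining question is for which $\epsilon$ the functional on $\mathbb{C}_{q}[W]$ sending $T_{\mathbf{w}}^{(q)}$ to $(\sqrt{q})_{\mathbf{w},\epsilon}$ (well defined since these values satisfy \eqref{multiplication}) is bounded for the reduced norm; applying Proposition \ref{isomorphism} once more, this depends only on the point $\left|q_{\epsilon}\right|$, and amounts to asking whether $T_{s}^{(\left|q_{\epsilon}\right|)}\mapsto(\left|q_{\epsilon}\right|)_{s}^{1/2}$ is bounded on $C_{r,\left|q_{\epsilon}\right|}^{\ast}(W)$.

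\emph{Existence for $\left|q_{\epsilon}\right|\in\overline{\mathcal{R}^{\prime}}$.} If $\left|q_{\epsilon}\right|\in\mathcal{R}$ then $\sum_{\mathbf{w}}(\sqrt{q})_{\mathbf{w},\epsilon}\delta_{\mathbf{w}}\in\ell^{2}(W)$ is a Hecke eigenvector and Proposition \ref{projections} produces a rank-one central projection $E_{q,\epsilon}\in C_{r,q}^{\ast}(W)$, so that $x\mapsto xE_{q,\epsilon}\in\mathbb{C}E_{q,\epsilon}$ is $\chi_{q_{\epsilon}}$; combined with Proposition \ref{isomorphism} and (for $\#S\geq3$) the decomposition $C_{r,q}^{\ast}(W)\cong\pi(C_{r,q}^{\ast}(W))\oplus\bigoplus_{\left|q_{\epsilon}\right|\in\mathcal{R}^{\prime}}\mathbb{C}$ established above, this yields $\chi_{q_{\epsilon}}$ for all $\left|q_{\epsilon}\right|\in\mathcal{R}^{\prime}$ (the ranks $\#S\leq2$ being elementary). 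For a boundary point $\left|q_{\epsilon}\right|\in\overline{\mathcal{R}^{\prime}}\setminus\mathcal{R}^{\prime}$ I would choose $p^{(n)}\in\mathcal{R}^{\prime}$ with $p^{(n)}\to\left|q_{\epsilon}\right|$ and set $q_{s}^{(n)}:=(p_{s}^{(n)})^{\epsilon_{s}}$, so that $q^{(n)}\to q$ and $\left|q_{\epsilon}^{(n)}\right|=p^{(n)}\in\mathcal{R}^{\prime}$; then each $\chi_{q_{\epsilon}^{(n)}}$ is a character of $C_{r,q^{(n)}}^{\ast}(W)$, hence contractive, and since $q\mapsto T_{s}^{(q)}=T_{s}^{(1)}+p_{s}(q)P_{s}$ is norm-continuous, for every finitely supported family $(c_{\mathbf{w}})$ the norm $\Vert\sum c_{\mathbf{w}}T_{\mathbf{w}}^{(q)}\Vert$ is continuous in $q$. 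Passing to the limit in $\bigl|\chi_{q_{\epsilon}^{(n)}}(\sum c_{\mathbf{w}}T_{\mathbf{w}}^{(q^{(n)})})\bigr|\leq\Vert\sum c_{\mathbf{w}}T_{\mathbf{w}}^{(q^{(n)})}\Vert$ shows that $\chi_{q_{\epsilon}}$ is contractive on $\mathbb{C}_{q}[W]$, hence extends to $C_{r,q}^{\ast}(W)$.

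\emph{Non-existence for $\left|q_{\epsilon}\right|\notin\overline{\mathcal{R}^{\prime}}$ --- the main obstacle.} Since $\overline{\mathcal{R}^{\prime}}$ is invariant under coordinate inversions this is exactly the condition $q\notin\overline{\mathcal{R}^{\prime}}$, so the claim becomes that such a $C_{r,q}^{\ast}(W)$ has no characters at all (the cases $\#S\leq2$, where $W\in\{\mathbb{Z}_{2},\mathbb{Z}_{2}\ast\mathbb{Z}_{2}\}$, and the infinite-rank case being elementary or immediate). Here the plan is to suppose $\chi=\chi_{q_{\epsilon}}$ is a character, extend it by Hahn--Banach to a state $\phi$ on $\mathfrak{A}(W)$, and apply Proposition \ref{step 1} to a path $\mathbf{g}$ covering the Coxeter diagram to obtain $\mathbf{w}_{i}\in W$ of increasing length with $\mathbf{g}\leq\mathbf{w}_{i}^{-1}$ and $q_{\mathbf{w}_{i}}^{-1}\phi(P_{\mathbf{w}_{i}})\to0$; expressing $\phi(P_{\mathbf{w}_{i}})$ in terms of the Hecke generators via Lemma \ref{Martijn} and Corollary \ref{decomposition}, using multiplicativity of $\chi$ (which pins the relevant values to powers of the $(\sqrt{q})_{s,\epsilon}$) and the Haagerup-type inequality of Theorem \ref{Haagerup} to control the arising norms, one should reach a contradiction with that vanishing. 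Equivalently, once the simplicity theorem is available, $q\notin\overline{\mathcal{R}^{\prime}}$ makes $C_{r,q}^{\ast}(W)$ simple and infinite-dimensional, hence characterless. I expect the genuinely delicate step to be this last implication: extracting a quantitative lower bound on $\phi(P_{\mathbf{w}_{i}})$ from the character and matching the resulting growth estimate with membership in $\overline{\mathcal{R}^{\prime}}$.
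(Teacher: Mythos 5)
Your first inclusion (every character sends $T_{s}^{(q)}$ into the spectrum $\{q_{s}^{1/2},-q_{s}^{-1/2}\}$ and is therefore some $\chi_{q_{\epsilon}}$) matches the paper, and your existence argument for $\left|q_{\epsilon}\right|\in\overline{\mathcal{R}^{\prime}}$ is correct and in fact more self-contained than the paper's: the paper cites \cite[Lemma 5.3]{Mario} at this point, whereas your limiting argument over $p^{(n)}\in\mathcal{R}^{\prime}$, using contractivity of characters on $C_{r,q^{(n)}}^{\ast}(W)$ and norm-continuity of $q\mapsto T_{\mathbf{w}}^{(q)}$ on finite linear combinations, is a clean substitute. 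The problem is the non-existence direction, which you yourself flag as unresolved; that is precisely the substance of the proposition. The machinery you propose there points the wrong way: Lemma \ref{Martijn} and Corollary \ref{decomposition} express Hecke operators in terms of the projections $P_{\mathbf{w}}$, not conversely (and $P_{\mathbf{w}}$ does not in general lie in $C_{r,q}^{\ast}(W)$, so ``expressing $\phi(P_{\mathbf{w}_{i}})$ in terms of the Hecke generators'' is not available), while Theorem \ref{Haagerup} gives \emph{upper} bounds on norms, whereas what you need is a \emph{lower} bound on $\phi(P_{\mathbf{w}_{i}})$ relative to $q_{\mathbf{w}_{i},\epsilon}$ in order to contradict Proposition \ref{step 1}.

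The missing idea is to conjugate by $T_{\mathbf{w}_{i}}^{(q)}$ inside the multiplicative domain. Extend $\chi$ to a state on $\mathfrak{A}(W)$; since $\chi$ is multiplicative on $C_{r,q}^{\ast}(W)$, that subalgebra lies in the multiplicative domain, so $\chi(T_{s}^{(q)})=q_{\mathbf{w}_{i},\epsilon}^{-1}\chi\bigl(T_{\mathbf{w}_{i}}^{(q)}T_{s}^{(q)}T_{\mathbf{w}_{i}^{-1}}^{(q)}\bigr)$. Choosing the covering path $\mathbf{g}=s_{1}\dots s_{n}$ with $m_{ss_{1}}=\infty$, splitting $T_{s}^{(q)}=(1-P_{s})T_{s}^{(q)}+P_{s}T_{s}^{(q)}$ and applying Proposition \ref{action} and Remark \ref{remark2} converts both summands into expressions controlled, after Cauchy--Schwarz, by $\sqrt{\lvert q_{\mathbf{w}_{i},\epsilon}^{-1}\chi(P_{\mathbf{w}_{i}s})\rvert}$ with $P_{\mathbf{w}_{i}s}\leq P_{\mathbf{w}_{i}}$; Proposition \ref{step 1}, applied with the parameter $\left|q_{\epsilon}\right|\notin\overline{\mathcal{R}^{\prime}}$, then forces $\chi(T_{s}^{(q)})=0$, contradicting $\chi(T_{s}^{(q)})\in\{q_{s}^{1/2},-q_{s}^{-1/2}\}$. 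Your fallback via simplicity is not circular --- the paper's proof that $C_{r,q}^{\ast}(W)$ is simple for $q\notin\overline{\mathcal{R}^{\prime}}$ does not use this proposition --- but it defers exactly the same analytic work to Proposition \ref{step 2}, so as a proof of the present statement it still leaves the decisive estimate unestablished.
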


\begin{proof}
By Proposition \ref{isomorphism} we can assume that $0<q_{s}\leq1$ for all $s\in S$. Arguing exactly as in the proof of \cite[Lemma 5.3]{Mario} it follows from Proposition \ref{projections} (or also from \cite[Proposition 2.2]{Raum}) that for every $\epsilon\in\left\{ -1,1\right\} ^{(W,S)}$ with $\left|q_{\epsilon}\right|\in\overline{\mathcal{R}^{\prime}}$ the character $\chi_{q_{\epsilon}}$ exists. Conversely, let $\chi$ be a state on $\mathfrak{A}(W)$ which restricts to a character on $C_{r,q}^{\ast}(W)$. For $s\in S$ the Hecke relation $(T_{s}^{(q)})^{2}=1+p_{s}(q)T_{s}^{(q)}$ implies $(\chi(T_{s}^{(q)}))^{2}-p_{s}(q)\chi(T_{s}^{(q)})=1$ and hence $\chi(T_{s}^{(q)})\in\{ q_{s}^{\frac{1}{2}},-q_{s}^{-\frac{1}{2}}\}$. One can thus find $\epsilon\in\left\{ -1,1\right\} ^{(W,S)}$ with $\chi=\chi_{q_{\epsilon}}$. Now assume that $\left|q_{\epsilon}\right|\notin\overline{\mathcal{R}^{\prime}}$, fix $s\in S$ and choose a path $\mathbf{g}:=s_{1}...s_{n}\in W$ in the Coxeter diagram of $(W,S)$ that covers the whole graph and for which $m_{ss_{1}}=\infty$. By Proposition \ref{step 1} there exists a sequence $(\mathbf{w}_{i})_{i\in\mathbb{N}}\subseteq W$ of increasing word length with $\mathbf{g}\leq\mathbf{w}_{i}^{-1}$ for all $i\in\mathbb{N}$ and $\left|q_{\mathbf{w}_{i},\epsilon}^{-1}\chi(P_{\mathbf{w}_{i}})\right|\rightarrow0$. We further have that $P_{\mathbf{w}_{i}s}\leq P_{\mathbf{w}_{i}}$, so $\chi(P_{\mathbf{w}_{i}s})\leq\chi(P_{\mathbf{w}_{i}})$. Using that $T_{\mathbf{w}_{i}}^{(q)}$ and $T_{\mathbf{w}_{i}^{-1}}^{(q)}$ lie in the multiplicative domain of $\chi$ (see for instance \cite[Proposition 1.5.7]{BrownOzawa}) one has
\begin{eqnarray}
\nonumber
\left|\chi(T_{s}^{(q)})\right|&=&\left|q_{\mathbf{w}_{i},\epsilon}^{-1}\right|\left|\chi(T_{\mathbf{w}_{i}}^{(q)}(1-P_{s})T_{s}^{(q)}T_{\mathbf{w}_{i}^{-1}}^{(q)})+\chi(T_{\mathbf{w}_{i}}^{(q)}P_{s}T_{s}^{(q)}T_{\mathbf{w}_{i}^{-1}}^{(q)})\right| \\
\nonumber
&\leq&  \left|q_{\mathbf{w}_{i},\epsilon}^{-1}\chi(T_{\mathbf{w}_{i}}^{(q)}(1-P_{s})T_{s}^{(q)}T_{\mathbf{w}_{i}^{-1}}^{(q)})\right|+\left|q_{\mathbf{w}_{i},\epsilon}^{-1}\chi(T_{\mathbf{w}_{i}}^{(q)}P_{s}T_{s}^{(q)}T_{\mathbf{w}_{i}^{-1}}^{(q)})\right|.
\end{eqnarray}
By Proposition \ref{action} (as well as Remark \ref{remark2}),
\begin{eqnarray}
\nonumber
(1-P_{s})T_{s}^{(q)}T_{\mathbf{w}_{i}^{-1}}^{(q)}=T_{s\mathbf{w}_{i}^{-1}}^{(1)}P_{\mathbf{w}_{i}s} \:\:\: \text{and} \:\:\: T_{\mathbf{w}_{i}}^{(q)}P_{s}=P_{\mathbf{w}_{i}s}T_{\mathbf{w}_{i}}^{(1)},
\end{eqnarray}
so
\begin{eqnarray}
\nonumber
\left|\chi(T_{s}^{(q)})\right| &\leq& \left|q_{\mathbf{w}_{i},\epsilon}^{-1}\chi(T_{\mathbf{w}_{i}}^{(q)} T_{s \mathbf{w}_{i}^{-1}}^{(1)}P_{\mathbf{w}_{i}s})\right|+\left|q_{\mathbf{w}_{i},\epsilon}^{-1}\chi(P_{\mathbf{w}_{i}s}T_{\mathbf{w}_{i}}^{(1)}T_{s}^{(q)}T_{\mathbf{w}_{i}^{-1}}^{(q)})\right| \\
\nonumber
&=& \left|q_{\mathbf{w}_{i},\epsilon}^{-1/2}\chi(T_{s\mathbf{w}_{i}^{-1}}^{(1)}P_{\mathbf{w}_{i}s})\right|+\left|q_{s, \epsilon}^{-1/2} q_{\mathbf{w}_{i},\epsilon}^{-1/2}\chi(P_{\mathbf{w}_{i}s}T_{\mathbf{w}_{i}}^{(1)})\right| \text{.}
\end{eqnarray}
The Cauchy-Schwarz inequality then implies
\begin{eqnarray}
\nonumber
\left|\chi(T_{s}^{(q)})\right|&\leq& (1+q_{s}^{-1/2})\sqrt{\left|q_{\mathbf{w}_{i},\epsilon}^{-1}\chi\left(P_{\mathbf{w}_{i}s}\right)\right|} \rightarrow 0 \text{.}
\end{eqnarray}
This contradicts $\chi(T_{s}^{(q)})\in\{ q_{s}^{\frac{1}{2}},-q_{s}^{-\frac{1}{2}}\}$.
\end{proof}

\vspace{1mm}


\section{Simplicity of right-angled Hecke C$^{\ast}$-algebras} \label{simplicity}

In this last section we study the simplicity of right-angled Hecke C$^{\ast}$-algebras (recall that a C$^{\ast}$-algebra is \emph{simple} if it contains no non-trivial, two-sided, closed ideal). Our approach is inspired by \cite{Haagerup}. It requires the following lemma which immediately follows from \cite[Lemma 4.2]{Mario2}.

\begin{lemma} \label{positive}
Let $(W,S)$ be a Coxeter system, $q=(q_{s})_{s\in S}\in\mathbb{R}_{>0}^{(W,S)}$ and $\mathbf{w}\in W$. Let further $\mathbf{w}=s_{1}...s_{n}$ with $s_{1},...,s_{n}\in S$ be a reduced expression for $\mathbf{w}$. Then,
\begin{eqnarray}
\nonumber
\prod_{i=1}^{n}\min\left\{ q_{s_{i}}^{\pm1}\right\} \leq(T_{\mathbf{w}}^{(q)})^{\ast}T_{\mathbf{w}}^{(q)}\leq\prod_{i=1}^{n}\max\left\{ q_{s_{i}}^{\pm 1}\right\} \text{.}
\end{eqnarray}
\end{lemma}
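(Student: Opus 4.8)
The plan is to prove the operator inequality
\[
\prod_{i=1}^{n}\min\{q_{s_{i}}^{\pm1}\}\leq (T_{\mathbf{w}}^{(q)})^{\ast}T_{\mathbf{w}}^{(q)}\leq\prod_{i=1}^{n}\max\{q_{s_{i}}^{\pm1}\}
\]
by induction on the word length $n=|\mathbf{w}|$, reducing everything to the rank-one fact that for a single generator $s\in S$ one has $\min\{q_s,q_s^{-1}\}\cdot 1\leq (T_s^{(q)})^{\ast}T_s^{(q)}\leq\max\{q_s,q_s^{-1}\}\cdot 1$. The base case $n=1$: from the Hecke relation $(T_s^{(q)})^2=1+p_s(q)T_s^{(q)}$ together with $(T_s^{(q)})^{\ast}=T_{s^{-1}}^{(q)}=T_s^{(q)}$ (since $s=s^{-1}$), the operator $T_s^{(q)}$ is self-adjoint and its spectrum is contained in the set of roots of $\lambda^2-p_s(q)\lambda-1=0$, namely $\{q_s^{1/2},-q_s^{-1/2}\}$. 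Hence the spectrum of $(T_s^{(q)})^{\ast}T_s^{(q)}=(T_s^{(q)})^2$ is contained in $\{q_s,q_s^{-1}\}$, which gives the two-sided bound in the rank-one case.

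For the inductive step, write $\mathbf{w}=s_1\cdots s_n$ reduced and set $\mathbf{v}:=s_2\cdots s_n$, so that $|\mathbf{v}|=n-1$ and, by \eqref{multiplication} applied to a reduced expression, $T_{\mathbf{w}}^{(q)}=T_{s_1}^{(q)}T_{\mathbf{v}}^{(q)}$. Then
\[
(T_{\mathbf{w}}^{(q)})^{\ast}T_{\mathbf{w}}^{(q)}=(T_{\mathbf{v}}^{(q)})^{\ast}(T_{s_1}^{(q)})^{\ast}T_{s_1}^{(q)}T_{\mathbf{v}}^{(q)}.
\]
By the rank-one case, $m_1\cdot 1\leq (T_{s_1}^{(q)})^{\ast}T_{s_1}^{(q)}\leq M_1\cdot 1$ with $m_1:=\min\{q_{s_1}^{\pm1}\}$, $M_1:=\max\{q_{s_1}^{\pm1}\}$. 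Conjugating this operator inequality by $T_{\mathbf{v}}^{(q)}$ — which preserves the order, since $a\le b$ implies $x^{\ast}ax\le x^{\ast}bx$ for any operator $x$ — yields
\[
m_1\,(T_{\mathbf{v}}^{(q)})^{\ast}T_{\mathbf{v}}^{(q)}\leq (T_{\mathbf{w}}^{(q)})^{\ast}T_{\mathbf{w}}^{(q)}\leq M_1\,(T_{\mathbf{v}}^{(q)})^{\ast}T_{\mathbf{v}}^{(q)}.
\]
Applying the induction hypothesis to $\mathbf{v}$ (whose reduced expression is $s_2\cdots s_n$) gives $\prod_{i=2}^{n}m_i\cdot 1\le (T_{\mathbf{v}}^{(q)})^{\ast}T_{\mathbf{v}}^{(q)}\le \prod_{i=2}^{n}M_i\cdot 1$, and combining the two displays (using $m_1>0$ for the lower bound) closes the induction.

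The only subtle point is making sure the factorization $T_{\mathbf{w}}^{(q)}=T_{s_1}^{(q)}T_{\mathbf{v}}^{(q)}$ is legitimate: this is exactly the remark following \eqref{multiplication}, that $T_{\mathbf{w}}^{(q)}=T_{s_1}^{(q)}\cdots T_{s_n}^{(q)}$ for any reduced expression, so $s_2\cdots s_n$ being reduced gives $T_{\mathbf{v}}^{(q)}=T_{s_2}^{(q)}\cdots T_{s_n}^{(q)}$ and the grouping is valid. I expect no real obstacle here; the main thing to be careful about is the direction of inequalities under conjugation and the positivity of $m_1$ needed to keep the lower bound meaningful. In fact this argument is essentially what is recorded in \cite[Lemma 4.2]{Mario2}, from which the statement is quoted; the proof above simply spells out the induction.
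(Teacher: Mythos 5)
Your proof is correct: the spectral computation for a single generator (spectrum of $T_s^{(q)}$ contained in $\{q_s^{1/2},-q_s^{-1/2}\}$ from the quadratic Hecke relation) and the inductive conjugation step $x^\ast a x\le x^\ast b x$ are both sound, and the factorization $T_{\mathbf{w}}^{(q)}=T_{s_1}^{(q)}T_{\mathbf{v}}^{(q)}$ is justified exactly as you say. The paper itself gives no proof — it only cites \cite[Lemma 4.2]{Mario2} — so your argument simply supplies the standard details behind that citation, and there is nothing to object to.
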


Recall that for a finite rank Coxeter system $(W,S)$ and $q\in\mathbb{R}_{>0}^{(W,S)}\setminus\mathcal{R}^{\prime}$ the Hecke C$^{\ast}$-algebra $C_{r,q}^{\ast}(W)$ can be viewed as a C$^{\ast}$-subalgebra of $\pi(\mathfrak{A}(W))$ (see Section \ref{2}). We will use this observation frequently.

\begin{proposition} \label{step 2}
Let $(W,S)$ be a right-angled, irreducible, finite rank Coxeter system, $q=(q_{s})_{s\in S}\in\mathbb{R}_{>0}^{(W,S)}\setminus\overline{\mathcal{R}^{\prime}}$ with $0<q_{s}\leq1$ for all $s\in S$ and let $I\neq C_{r,q}^{\ast}(W)$ be an ideal of $C_{r,q}^\ast (W)$ where we view $C_{r,q}^{\ast}(W)$ as a C$^{\ast}$-subalgebra of $\pi(\mathfrak{A}(W))$. Then, for every two elements $s,t\in S$ with $m_{st}=\infty$ there exists a state $\phi$ on $\pi(\mathfrak{A}(W))$ that vanishes on $I$ and which satisfies $\phi(P_{s})=1$, $\phi(P_{t})=0$.
\end{proposition}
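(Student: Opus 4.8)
The plan is to start from an arbitrary ideal $I\neq C_{r,q}^{\ast}(W)$ inside $\pi(\mathfrak{A}(W))$, pass to the quotient $\pi(\mathfrak{A}(W))/\overline{I}$ (or just pick any state $\psi$ on $\pi(\mathfrak{A}(W))$ vanishing on $I$, which exists by Hahn–Banach since $I$ is proper) and then \emph{translate} that state around the group until it concentrates on the subbasic set $\mathcal{U}_s\cap\partial(W,S)$ while avoiding $\mathcal{U}_t\cap\partial(W,S)$. Concretely, I would first apply Proposition \ref{step 1} to a state $\phi_0$ on $\pi(\mathfrak{A}(W))$ vanishing on $I$: fixing a path $\mathbf{g}=s_1\dots s_n\in W$ in the Coxeter diagram that covers the whole graph (such a path exists by irreducibility and finite rank), Proposition \ref{step 1} yields a sequence $(\mathbf{w}_i)\subseteq W$ with increasing word length, $\mathbf{g}\le \mathbf{w}_i^{-1}$, and $q_{\mathbf{w}_i}^{-1}\phi_0(\tilde P_{\mathbf{w}_i})\to 0$. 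The group elements $\mathbf{w}_i^{-1}$ then act as a sequence of transformations; I would look at the translated states $\phi_i:=\mathbf{w}_i^{-1}.\phi_0$ (which still vanish on $I$ because $I$ is an ideal, hence $W$-invariant under the conjugation action $\mathbf{w}.x=T_{\mathbf{w}}^{(1)}xT_{\mathbf{w}^{-1}}^{(1)}$), and take a weak-$\ast$ cluster point $\phi$, which again vanishes on $I$.

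The heart of the argument is to show that this cluster point satisfies $\phi(P_s)=1$ and $\phi(P_t)=0$. Since $\mathbf{g}\le\mathbf{w}_i^{-1}$ and $\mathbf{g}=s_1\dots s_n$ with $m_{ss_1}=\infty$ — here I would choose the covering path to \emph{begin} with a generator $s_1$ non-adjacent to $s$, and since $m_{st}=\infty$ I can arrange things so that the relevant centralizer/commutation conditions of Proposition \ref{action} apply — the conjugation formulas $\mathbf{w}_i^{-1}.P_{\mathbf{w}_i}=\dots$ from Proposition \ref{action} and Remark \ref{remark2} give $T_{\mathbf{w}_i}^{(1)}P_s T_{\mathbf{w}_i^{-1}}^{(1)}=P_{\mathbf{w}_i s}$ when $s_1\nleq\mathbf{w}_i$ and $\mathbf{w}_i\notin C_W(s)$, etc. The point is that $\phi_i(P_s)=\phi_0(\,\mathbf{w}_i.P_s\,)$, and using $P_{\mathbf{w}_i s}\le P_{\mathbf{w}_i}$ together with $q_{\mathbf{w}_i}^{-1}\phi_0(\tilde P_{\mathbf{w}_i})\to 0$, plus the fact that $P_{\mathbf{w}_i}$ is being scaled back up by $T_{\mathbf{w}_i}^{(1)}$ which is \emph{unitary} (so no $q$-weight enters for the $q=1$ operators), one forces the ``mass near $\mathbf{w}_i$'' to escape and $\phi(1-P_s)$ and $\phi(P_t)$ to vanish. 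This mirrors the computation in the proof of Proposition \ref{characters}, where exactly this kind of Cauchy–Schwarz estimate $|\chi(T_s^{(q)})|\le (1+q_s^{-1/2})\sqrt{|q_{\mathbf{w}_i,\epsilon}^{-1}\chi(P_{\mathbf{w}_i s})|}\to 0$ appears; I would adapt that estimate, replacing the character by the generic state $\phi_0$ and tracking the action on $P_s$ and $P_t$ directly.

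The main obstacle, I expect, is bookkeeping the commutation relations: ensuring that the chosen covering path $\mathbf{g}$ and the elements $\mathbf{w}_i$ (with $\mathbf{g}\le\mathbf{w}_i^{-1}$, equivalently the $\mathbf{w}_i$ end with a long word starting with $s_n$ on the right) genuinely land in the cases of Proposition \ref{action} that I want — namely that $\mathbf{w}_i\notin C_W(s)$ and $s\nleq\mathbf{w}_i$ (forcing $\mathbf{w}_i.P_s=P_{\mathbf{w}_i s}$, whence $\phi_i(P_s)=\phi_0(P_{\mathbf{w}_i s})$ goes to a limit controlled by $\phi_0(\tilde P_{\mathbf{w}_i})$), and similarly handling $P_t$ so that the $t$-mass genuinely disappears because $m_{st}=\infty$ makes $P_s P_t = 0$ in the limit. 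Once the cases are set up correctly, the estimates are routine and exactly parallel to Proposition \ref{characters}; the delicate part is choosing $\mathbf{g}$ and verifying the order relations so that the formulas of Proposition \ref{action} apply uniformly along the sequence. I would finish by checking that $\phi$, being a weak-$\ast$ limit of states vanishing on $I$, still vanishes on the closed ideal $\overline{I}$, and then $\phi(P_s)=1$, $\phi(P_t)=0$ as required.
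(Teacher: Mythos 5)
Your overall strategy (translate a state vanishing on $I$ along the sequence $(\mathbf{w}_i)$ produced by Proposition \ref{step 1}, take a weak-$\ast$ cluster point, and use $P_sP_t=0$ to get $\phi(P_t)=0$ from $\phi(P_s)=1$) is the right one and matches the paper in spirit, but there is one genuine gap at the central step: you translate by the \emph{group} operators, setting $\phi_i:=\mathbf{w}_i^{-1}.\phi_0$ with $(\mathbf{w}.x)=T_{\mathbf{w}}^{(1)}xT_{\mathbf{w}^{-1}}^{(1)}$, and assert that these states still vanish on $I$ ``because $I$ is an ideal, hence $W$-invariant under the conjugation action.'' This is false for $q\neq 1$: the unitaries $T_{\mathbf{w}_i}^{(1)}=\lambda_{\mathbf{w}_i}$ do not belong to $C_{r,q}^{\ast}(W)$, only to the larger algebra $\pi(\mathfrak{A}(W))$, so conjugation by them does not map the ideal $I\subseteq C_{r,q}^{\ast}(W)$ into itself, and $\phi_0(T_{\mathbf{w}_i}^{(1)}aT_{\mathbf{w}_i^{-1}}^{(1)})$ need not vanish for $a\in I$ ($\phi_0$ vanishes on $I$, not on the ideal $I$ generates in $\pi(\mathfrak{A}(W))$ --- assuming the latter is proper is essentially assuming what the main theorem sets out to prove). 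The paper's proof avoids this by conjugating with the Hecke operators $T_{\mathbf{w}_i}^{(q)}\in C_{r,q}^{\ast}(W)$, which do satisfy $T_{\mathbf{w}_i}^{(q)}IT_{\mathbf{w}_i^{-1}}^{(q)}\subseteq I$; the price is that $T_{\mathbf{w}_i}^{(q)}$ is not unitary, so one must normalize by $\psi(T_{\mathbf{w}_i}^{(q)}T_{\mathbf{w}_i^{-1}}^{(q)})$, check this quantity is nonzero (via the Hecke relation and Cauchy--Schwarz), and bound it below by $q_{\mathbf{w}_i}$ using Lemma \ref{positive}. That lower bound is exactly what turns the identity $T_{\mathbf{w}_i}^{(q)}(1-\tilde P_s)T_{\mathbf{w}_i^{-1}}^{(q)}=\tilde P_{\mathbf{w}_i}$ (valid when the covering path $\mathbf{g}$ is chosen to start with $s_1:=s$, $s_2:=t$, so that $s\leq\mathbf{w}_i^{-1}$) together with $q_{\mathbf{w}_i}^{-1}\psi(\tilde P_{\mathbf{w}_i})\to 0$ into the statement $\phi(\tilde P_s)=1$.

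Two smaller points. First, your choice of covering path (beginning with a generator \emph{non-adjacent} to $s$, as in Proposition \ref{characters}) is not the convenient one here; the paper starts the path at $s$ itself so that $s\leq\mathbf{w}_i^{-1}$ holds for every $i$ and the conjugation formula above applies uniformly, with $t=s_2$ then handled for free by $\tilde P_t\leq 1-\tilde P_s$. Second, your appeal to the unitarity of $T_{\mathbf{w}_i}^{(1)}$ to avoid $q$-weights is precisely the feature you must give up once you switch to $T_{\mathbf{w}_i}^{(q)}$; the replacement is Lemma \ref{positive}. With the conjugating operators corrected and the normalization added, the rest of your argument goes through as you describe.
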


\begin{proof}
Choose a state on $C_{r,q}^{\ast}(W)$ that vanishes on $I$. We can extend it to a state $\psi$ on $\pi(\mathfrak{A}(W))$. Further let $\mathbf{g}:=s_{1}...s_{n}\in W$ with $s_{1}:=s$, $s_{2}:=t$ be a path in the Coxeter diagram of $(W,S)$ that covers the whole graph and let $(\mathbf{w}_{i})_{i\in\mathbb{N}}\subseteq W$ be a sequence as in Proposition \ref{step 1}, i.e. the $\mathbf{w}_{i}$ have increasing word length, $\mathbf{g}\leq\mathbf{w}_{i}^{-1}$ for all $i\in\mathbb{N}$ and $q_{\mathbf{w}_{i}}^{-1}\psi(\tilde{P}_{\mathbf{w}_{i}})\rightarrow0$. Note that $\psi(T_{\mathbf{w}_{i}}^{(q)}T_{\mathbf{w}_{i}^{-1}}^{(q)})=0$ is not possible since then Lemma \ref{positive} and the Cauchy-Schwarz inequality would imply
\begin{eqnarray}
\nonumber
0=\psi(T_{\mathbf{w}_{i}}^{(q)}T_{\mathbf{w}_{i}^{-1}}^{(q)})\geq q_{\mathbf{w}_{i}s_{1}}\psi((T_{s_{1}}^{(q)})^{2})\geq q_{\mathbf{w}_{i}s_{1}}|\psi(T_{s_{1}}^{(q)})|^{2}
\end{eqnarray}
and thus $\psi((T_{s_{1}}^{(q)})^{2})=\psi(T_{s_{1}}^{(q)})=0$. This contradicts the identity $(T_{s_{1}}^{(q)})^{2}=1+p_{s}\left(q\right)T_{s_{1}}^{(q)}$.  With Proposition \ref{action} (as well as Remark \ref{remark2}) and Lemma \ref{positive} we get that for $i\in\mathbb{N}$,
\begin{eqnarray}
\nonumber
\left|\frac{\psi(T_{\mathbf{w}_{i}}^{(q)}\tilde{P}_{s}T_{\mathbf{w}_{i}^{-1}}^{(q)})}{\psi(T_{\mathbf{w}_{i}}^{(q)}T_{\mathbf{w}_{i}^{-1}}^{(q)})}-1\right|=\left|\frac{\psi(T_{\mathbf{w}_{i}}^{(q)}(\tilde{P}_{s}-1)T_{\mathbf{w}_{i}^{-1}}^{(q)})}{\psi(T_{\mathbf{w}_{i}}^{(q)}T_{\mathbf{w}_{i}^{-1}}^{(q)})}\right|=\left|\frac{\psi(\tilde{P}_{\mathbf{w}_{i}})}{\psi(T_{\mathbf{w}_{i}}^{(q)}T_{\mathbf{w}_{i}^{-1}}^{(q)})}\right|\leq q_{\mathbf{w}_{i}}^{-1}\psi(\tilde{P}_{\mathbf{w}_{i}})\rightarrow0\text{.}
\end{eqnarray}
The weak-$\ast$ compactness of the state space $\mathcal{S}(\pi(\mathfrak{A}(W)))$ implies that we can find a subsequence of
\begin{eqnarray}
\nonumber
\left((\psi(T_{\mathbf{w}_{i}}^{(q)}T_{\mathbf{w}_{i}^{-1}}^{(q)}))^{-1}\psi(T_{\mathbf{w}_{i}}^{(q)}(\cdot)T_{\mathbf{w}_{i}^{-1}}^{(q)})\right)_{i\in\mathbb{N}}\subseteq\mathcal{S}(\pi(\mathfrak{A}(W)))
\end{eqnarray}
that weak-$\ast$ converges to a state $\phi$. By construction, this state vanishes on the ideal $I$, we have $\phi(\tilde{P}_{s})=1$ and hence also $\phi(\tilde{P}_{t})=0$ since $0\leq\tilde{P}_{t}\leq1-\tilde{P}_{s}$.
\end{proof}

Recall that the inner action of the group $W$ on $\pi(\mathfrak{A}(W))$ defined by $\mathbf{w}.x:=T_{\mathbf{w}}^{(1)}xT_{\mathbf{w}^{-1}}^{(1)}$ for $\mathbf{w}\in W$, $x\in\pi(\mathfrak{A}(W))$ induces an action of $W$ on the state space of $\pi(\mathfrak{A}(W))$ via $(\mathbf{w}.\phi)(x):=\phi(T_{\mathbf{w}^{-1}}^{(1)}xT_{\mathbf{w}}^{(1)})$ for $\phi\in\mathcal{S}(\pi(\mathfrak{A}(W)))$, $\mathbf{w}\in W$ and $x\in\pi(\mathfrak{A}(W))$.

We are now ready to characterize the simplicity of right-angled Hecke C$^{\ast}$-algebras.

\begin{theorem} \label{simplicity}
Let $(W,S)$ be an irreducible, right-angled, finite rank Coxeter system and let $q=(q_{s})_{s\in S}\in\mathbb{R}_{>0}^{(W,S)}$ be a multi-parameter. Then the Hecke C$^{\ast}$-algebra $C_{r,q}^{\ast}(W)$ is simple if and only if $q\in\mathbb{R}_{>0}^{(W,S)}\setminus\overline{\mathcal{R}^{\prime}}$.
\end{theorem}

\begin{proof}
By Proposition \ref{characters} the Hecke C$^{\ast}$-algebra $C_{r,q}^{\ast}(W)$ is not simple for $q\in\overline{\mathcal{R}^{\prime}}$. For the treatment of the case where $q\in\mathbb{R}_{>0}^{(W,S)}\setminus\overline{\mathcal{R}^{\prime}}$ by Proposition \ref{isomorphism} it suffices to consider multi-parameters with $0<q_{s}\leq1$ for all $s\in S$. View $C_{r,q}^{\ast}(W)$ as a C$^{\ast}$-subalgebra of $\pi(\mathfrak{A}(W))$ and assume that $I\neq C_{r,q}^{\ast}(W)$ is an ideal in $C_{r,q}^{\ast}(W)$. Further choose a closed path $\mathbf{g}:=s_{1}...s_{n}$ in the Coxeter diagram of $(W,S)$ that covers the whole graph. Proposition \ref{step 2} implies that we can find a state $\phi$ on $\pi(\mathfrak{A}(W))$ that vanishes on $I$ and for which $\phi(\tilde{P}_{s_{1}})=1$, $\phi(\tilde{P}_{s_{n}})=0$ holds. In particular the projections $\tilde{P}_{s_{1}}$, $\tilde{P}_{s_{n}}$ are contained in the multiplicative domain of $\phi$ (see for instance \cite[Proposition 1.5.7]{BrownOzawa}).

By the identification $\pi(\mathcal{D}(W,S))\cong C(\partial(W,S))$ and the equality $\phi(\tilde{P}_{s_{1}})=1$ the restriction of $\phi$ to $\pi(\mathcal{D}(W,S))$ corresponds to a probability measure $\mu$ on the boundary $\partial(W,S)$ whose support is contained in the set of all $z\in\partial(W,S)$ with $s_{1}\leq z$. The sequence $(\mathbf{g}^{i}.\mu)_{i\in\mathbb{N}}$ hence weak-$\ast$ converges to the point mass $\delta_{\mathbf{g}^{\infty}}\in\text{Prob}(\partial(W,S))$ where $g^{\infty}:=\lim_{l}\mathbf{g}^{l}\in\partial(W,S)$ and where $\text{Prob}(\partial(W,S))$ denotes the space of all probability measures on $\partial(W,S)$ (compare also with the proof of \cite[Theorem 3.20]{Mario2}). This implies that there exists an increasing sequence $(i_{k})_{k\in\mathbb{N}}\subseteq\mathbb{N}$ for which $\left(\mathbf{g}^{i_{k}}.\phi\right)_{k\in\mathbb{N}}$ weak-$\ast$ converges to a state $\psi$ whose restriction to $\pi(\mathcal{D}(W,S))$ is multiplicative. The product $s_n ... s_1 $ also defines a path in the Coxeter diagram of $(W,S)$. Using Lemma \ref{decomposition} and $\phi(\tilde{P}_{s_{n}})=0$ one deduces that for $a \in I$
\begin{eqnarray}
\nonumber
\psi(a)=\lim_{k}\phi(T_{\mathbf{g}^{-i_{k}}}^{(1)}aT_{\mathbf{g}^{i_{k}}}^{(1)})=\lim_{k}\phi(T_{\mathbf{g}^{-i_{k}}}^{(q)}aT_{\mathbf{g}^{i_{k}}}^{(q)})=0\text{,}
\end{eqnarray}
so $\psi$ vanishes on the ideal $I$.

Now, let $J$ be the ideal in $\pi(\mathfrak{A}(W))$ generated by $I$. Since $\pi(\mathfrak{A}(W))$ identifies with the crossed product C$^{\ast}$-algebra $C(\partial(W,S)) \rtimes_{r} W$, every element in $\pi(\mathfrak{A}(W))$ can be approximated by a finite sum of the form $\sum_{\mathbf{w}\in W}f_{\mathbf{w}}T_{\mathbf{w}}^{(1)}$ where $f_{\mathbf{w}}\in \pi(\mathcal{D}(W,S))$. Using $T_{s}^{(1)}=T_{s}^{(q)}-p_{s} (q) P_{s}$ for $s\in S$, one concludes via induction that every such operator can be written as a finite sum the form $\sum_{\mathbf{w}\in W}g_{\mathbf{w}}T_{\mathbf{w}}^{(q)}$ for suitable $g_{\mathbf{w}}\in \pi(\mathcal{D}(W,S))$. But for all $a\in I$, $g,h\in \pi(\mathcal{D}(W,S))$ and $\mathbf{v},\mathbf{w}\in W$ we have that
\begin{eqnarray}
\nonumber
\psi((gT_{\mathbf{w}}^{(q)})a(T_{\mathbf{v}}^{(q)}h))=\psi(g)\psi(T_{\mathbf{w}}^{(q)}aT_{\mathbf{v}}^{(q)})\psi(h)=0
\end{eqnarray}
since $T_{\mathbf{w}}^{(q)}aT_{\mathbf{v}}^{(q)}\in I$, so the state $\psi$ vanishes on $J$. In particular, since $\psi\neq0$, $J$ can not coincide with the whole C$^{\ast}$-algebra $\pi(\mathfrak{A}(W))$. But $\pi(\mathfrak{A}(W))$ is simple by \cite[Corollary 4.11]{Mario2}, so $J=0$. We get that $C_{r,q}^{\ast}(W)$ must be simple as well. This completes the proof.
\end{proof}

\begin{corollary} \label{infinitely}
Let $(W,S)$ be an irreducible, right-angled Coxeter
system with $\#S=\infty$ and let $q=(q_{s})_{s\in S}\in\mathbb{R}_{>0}^{(W,S)}$.
Then the Hecke C$^{\ast}$-algebra $C_{r,q}^{\ast}(W)$ is simple
if and only if there exists a finite subset $T\subseteq S$ such that
the Hecke C$^{\ast}$-algebra $C_{r,q_{T}}^{\ast}(W_{T})$ with $q_{T}:=(q_{t})_{t\in T}$
is simple.
\end{corollary}

\begin{proof}
Again, by Proposition \ref{isomorphism} it suffices to consider
multi-parameters with $0<q_{s}\leq1$ for $s\in S$. First assume
that for all finite subsets $T\subseteq S$ the Hecke C$^{\ast}$-algebra
$C_{r,q_{T}}^{\ast}(W_{T})$ is not simple. The map $\chi:T_{\mathbf{w}}^{(q)}\mapsto q_{\mathbf{w}}^{\frac{1}{2}}$,
$\mathbf{w}\in W$ defines a character on $\mathbb{C}_{q}[W]$. Further,
for every element $x:=\sum_{\mathbf{w}\in W}x(\mathbf{w})T_{\mathbf{w}}^{(q)}\in\mathbb{C}_{q}[W]$ with $x(\mathbf{w}) \in \mathbb{C}$ for all $\mathbf{w} \in W$
there exists a finite subset $T\subseteq S$ such that the support
$\{\mathbf{w}\in W\mid x(\mathbf{w})\neq0\}$ of $x$ is contained
in the special subgroup $W_{T}$. Recall that $C_{r,q_{T}}^{\ast}(W_{T})$ canonically
embeds into $C_{r,q}^{\ast}(W)$. Since by the assumption
$C_{r,q_{T}}^{\ast}(W_{T})$ is not simple, Theorem \ref{simplicity} implies in
combination with Proposition \ref{characters} that the restriction of $\chi$ to
$\mathbb{C}_{q_{T}}[W_{T}]$ continuously
extends to a character $\chi_{T}$ on $C_{r,q_{T}}^{\ast}(W_{T})$.
But then, $\left|\chi(x)\right|=\left|\chi_{T}(x)\right|\leq\left\Vert x\right\Vert $,
so (as $x$ was arbitrary) $\chi$ continuously extends to a character
on $C_{r,q}^{\ast}(W)$. Hence $C_{r,q}^\ast (W)$ is not simple.

Conversely assume that there exists a finite subset $T\subseteq S$
for which $C_{r,q_{T}}^{\ast}(W_{T})$ is simple. Then from Theorem
\ref{simplicity} it follows that the C$^{\ast}$-algebra $C_{r,q_{T^{\prime}}}^{\ast}(W_{T^{\prime}})$
is simple for all finite subsets $T^{\prime}\subseteq S$ with $T\subseteq T^{\prime}$.
It is a standard fact that inductive limits of simple C$^{\ast}$-algebras
are simple (see e.g. \cite[II.8.2.5]{Blackadar}), so the simplicity of $C_{r,q}^{\ast}(W)$
follows from Lemma \ref{isomorphism3}.
\end{proof}

The following example demonstrates that there exist infinitely
generated right-angled, irreducible Coxeter systems and corresponding
multi-parameters whose respective Hecke C$^{\ast}$-algebras are non-simple.

\begin{example}
Let $S=\{s_{1},s_{2},...\}$ be a countable set
and consider the Coxeter group $W$ generated by $S$ subject to the
relations defined by $m_{ss}=2$ for all $s\in S$ and $m_{st}=\infty$
for all $s,t\in S$, $s\neq t$. Define $q:=(q_{s})_{s\in S}\in\mathbb{R}_{>0}^{(W,S)}$
by $q_{s_{i}}:=2^{-i}$ for $i\in\mathbb{N}_{\geq1}$. Then for every
finite subset $T\subseteq S$ one checks that
\[
\sum_{s\in T}\frac{1}{1+q_{s}}\geq\sum_{i=1}^{\#T}\frac{1}{1+2^{-i}}\geq\#T-1
\]
and hence, by the analysis in \cite[Subsection 5.4]{Mario}, the C$^{\ast}$-algebra $C_{r,q_{T}}^{\ast}(W_{T})$
is not simple. Corollary \ref{infinitely} then implies that $C_{r,q}^{\ast}(W)$
is not simple.
\end{example}

\vspace{1mm}


\subsection*{Acknowledgements}

I am grateful to my supervisor Martijn Caspers for many enlightening discussions and helpful feedback.
It is also a pleasure to thank Sven Raum and Adam Skalski for their comments on an earlier draft of this paper.

\vspace{1mm}



\end{document}